\newcommand{\disk}{\ensuremath{\mathbb{D}} } 
\newcommand{\cdisk}{\ensuremath{\overline{\mathbb{D}}}} 
\newcommand{\pcdisk}{\ensuremath{\overline{\mathbb{D}}_0}} 
\newcommand{\pcdiski}{\ensuremath{\overline{\mathbb{D}}^*_{\infty}}}
\newcommand{\riem}{\Sigma}
\newcommand{\Aut}{\operatorname{Aut}}
\newcommand{\pmcgi}[1][]{\mathrm{PModI}({#1})} 
\renewcommand{\Bbb}[1]{\ensuremath{\mathbb{#1}}}
\newcommand{\ann}{\mathcal{A}}  
\newcommand{\annold}{\mathbb{A}}
\newcommand{\cpunc}{\mathbb{C}^*} 
\newcommand{\Oqc}{\mathcal{O}_{\mathrm{qc}}} 
\newcommand{\mba}{\widetilde{\mathcal{M}}(0,2)}
\newcommand{\cbar}{\overline{\mathbb{C}}}
\theoremstyle{plain}
        \newtheorem{theorem}{Theorem}[section]
        \newtheorem{lemma}[theorem]{Lemma}
        \newtheorem{proposition}[theorem]{Proposition}
        \newtheorem{corollary}[theorem]{Corollary}
\theoremstyle{definition}
        \newtheorem{definition}[theorem]{Definition}
\theoremstyle{remark}
    \newtheorem{remark}[theorem]{Remark}
\numberwithin{equation}{section} 
\numberwithin{figure}{section} 
\begin{document}

\title{The semigroup of rigged annuli and the Teichm\"uller space
of the annulus}

\author{David Radnell}
\address{Department of Mathematics and Statistics \\
American University of Sharjah \\
PO BOX 26666, Sharjah, UAE}
\email[D. ~Radnell]{dradnell@aus.edu}

\author{Eric Schippers}
\address{Department of Mathematics \\
University of Manitoba\\
Winnipeg, MB, R3T 2N2, Canada}
\email[E.
~Schippers]{eric\_schippers@umanitoba.ca}

\subjclass[2010]{Primary 30F60, 30C62, 58B12 ; Secondary 81T40}

\date{January 28, 2010}

\keywords{Semigroup of rigged annuli, Teichm\"uller spaces, quasiconformal mappings, sewing, conformal field theory}

\begin{abstract} Neretin and Segal independently
 defined a semigroup of annuli with boundary
 parametrizations, which is viewed as a complexification of the group
 of diffeomorphisms of the circle.  By extending the parametrizations
 to quasisymmetries, we show that this semigroup is a quotient of
 the Teichm\"uller space of doubly-connected Riemann surfaces by a
 $\mathbb{Z}$ action.  Furthermore, the semigroup can be given a
 complex structure in two distinct, natural ways.
 We show that these two complex structures are equivalent, and
 furthermore that multiplication is holomorphic.  Finally, we show
 that the class of quasiconformally-extendible conformal maps of the
 disk to itself is a complex submanifold in which composition is
 holomorphic.
\end{abstract}

\maketitle

\begin{section}{Introduction} \label{se:Introduction}
\begin{subsection}{Motivation and statement of results}
 The Lie algebra of the group of diffeomorphisms of the circle
 $\mathrm{Diff}(S^1)$ is the Witt algebra.
 It has been known for some time that there is no Lie group  whose Lie algebra is the complexification of
 the Virasoro algebra or Witt algebra (see Lempert \cite{Lempert}
 for a proof).  Thus there is no Lie group which is the complexification of
 $\mathrm{Diff}(S^1)$.
 However, Segal \cite{Segal} and Neretin \cite{Neretin_semigroup, Neretin_holomorphic} independently defined a semigroup which
 is in some sense the desired complexification.  The Neretin-Segal semigroup is defined as follows.
 The term \textit{annulus} will refer to a bordered Riemann surface that is biholomorphically equivalent to a doubly-connected domain in $\mathbb{C}$.
 Consider the set of annuli $A$, together with
 parametrizations $\phi_i:S^1 \rightarrow \partial_i A$, $i=1,2$, of
 each boundary component $\partial_i A$.   The maps $\phi_1 $ and $\phi_2$ are respectively orientation reversing and orientation preserving.
 Two such annuli with parametrizations are equivalent if
 there is a biholomorphism between them which preserves the parametrizations. The multiplication is obtained by
 sewing the first boundary component of one annulus to the second boundary component of another, by identifying points using the corresponding parametrizations, and carrying along
 the data of the remaining two parametrizations.  From here on we
 will refer to an element of this semigroup as a rigged annulus, where the
 term ``rigging'' refers to the boundary parametrizations.

 It is customary in conformal field theory (as defined by Segal \cite{Segal} and Kontsevich) to choose these
 parametrizations to be either diffeomorphisms or diffeomorphisms
 with analytic continuations to an annular neighbourhood of the
 boundary.  We choose rather quasisymmetric boundary
 parametrizations.  As a consequence, we are able to prove the
 following facts. \newpage

\noindent {\bf Results}:
\begin{enumerate}
\item The quasisymmetric Neretin-Segal semigroup is a quotient of
the
  Teichm\"uller space of the annulus by a properly discontinuous,
  fixed-point-free $\mathbb{Z}$-action.  The semigroup of rigged annuli
  thus inherits a complex structure from the Teichm\"uller space of the
  annulus. (Theorem
  \ref{th:TA_is_M}, Theorem \ref{th:complex_structure_from_TA} and diagram (\ref{teepee})).
\item The quasisymmetric Neretin-Segal semigroup is a complex
Banach manifold,
  locally modelled on certain function spaces in a natural way. (Theorem \ref{th:complexstructure_via_Oqc}).

\item The complex structures in the two previous item are compatible.
(Theorem \ref{th:L_holomorphic}, Theorem
\ref{th:Linverse_holomorphic} and diagram (\ref{teepee})).

\item Multiplication is holomorphic. (Corollary
\ref{co:multiplication_holomorphic}).

 \item The set of quasiconformally extendible one-to-one holomorphic maps of the
 disk into itself is a complex submanifold of the semigroup of
 rigged annuli, in which composition is holomorphic. (Theorem
 \ref{th:Esubmanifold} and Corollary \ref{co:Emultiplication_holo}).
\end{enumerate}
 Note that, in items (4) and (5), we prove holomorphicity in the
 sense that the derivative approximates the function up to first
 order; this is a much stronger result than G\^ateaux holomorphicity.
 Note also that result (1) establishes that the Teichm\"uller space of the
 annulus modulo a $\mathbb{Z}$ action possesses a semigroup
 structure.

 Two points must be emphasized.  First, without the choice of
 quasisymmetric riggings, it is impossible to establish the relation
 between the Neretin-Segal semigroup and the Teichm\"uller space. Second, the quasiconformal
 Teichm\"uller space of the annulus is infinite-dimensional,
 and in fact contains the information of the
 parametrizations.  When Teichm\"uller space appeared in previous
 models of the rigged moduli spaces of Riemann surfaces of arbitrary type, it was the
 finite-dimensional Teichm\"uller space of a compact surface.  It appeared as a base space of a fiber
  space, whose infinite-dimensional fibers consisted of the
  riggings.  The fact that the information of the riggings is somehow contained
  in the Teichm\"uller space of a bordered surface was
  demonstrated in \cite{RadSchip05}.

 Using these two insights,  in previous work the authors demonstrated
 the general relation between the
 rigged moduli spaces and quasiconformal Teichm\"uller space
 \cite{RadSchip05}.  Although the result (1) above has never been
 published, it is an immediate consequence of this previous work.
 The results (2) and (3) are not, and require some comment.  In \cite{RadSchip_fiber} we
  used the idea of the
 fiber space described in the previous paragraph to demonstrate that the Teichm\"uller space of
 a bordered Riemann surface has a natural fiber structure, with the
 fibers consisting of non-overlapping maps into the Riemann surface.
 This space of non-overlapping maps possesses
 a complex structure in an independent way (strangely, also related
 to the universal Teichm\"uller space)
 \cite{RadSchip_nonoverlapping}.  However, in the case of the
 annulus, there is a continuous family of conformal automorphisms, and consequently
  our previous results on the fiber structure do not apply.
 The same is true for our proofs of the compatibility of the two complex
 structures. Thus
 proofs are necessary in the doubly-connected case,
 and providing them is the main purpose of this paper.
 On the other hand, in some
 ways the proof in this special case is more transparent (see
 Remark \ref{re:why_easier} ahead).

 There is growing recognition of the advantages
 of using quasisymmetries of the circle rather than
 diffeomorphisms in the literature
 (e.g. \cite{NagSullivan, Pekonen, TTmem}).
 The quasisymmetric version of the Neretin-Segal semigroup itself appears in
 Pickrell \cite{Pickrell}.  We will continue to refer to the semigroup with
 quasisymmetric riggings as the Neretin-Segal semigroup.

 In the rest of Section \ref{se:Introduction}, we define the moduli space of rigged
 annuli in the quasisymmetric setting.  Section
 \ref{se_top:puncture_model} outlines the alternate model of the
 Neretin-Segal semigroup in terms of non-overlapping mappings into
 the sphere, which will be the model used throughout the paper.  In
 this section we also prove the multiplication formula in the
 quasisymmetric setting, and endow the semigroup with a complex
 structure.  Section \ref{se:relation_with_Teich} proves that the
 Neretin-Segal semigroup is in one-to-one correspondence with the
 Teichm\"uller space of the annulus modulo a $\mathbb{Z}$ action,
 and thus inherits a complex structure.
 The main results, the equivalence of the two complex structures, is the subject of
 Section \ref{se:compatibility}.  In this section we also
 demonstrate the holomorphicity of multiplication and conclude with
 some consequences for the semigroup of bounded univalent
 functions with quasiconformal extensions.
\end{subsection}
\begin{subsection}{Sewing Riemann surfaces via quasisymmetries}

 In this section we define quasisymmetric boundary parametrizations and sewing
 of general Riemann surfaces. Details and proofs can be
 found in \cite{RadSchip05}.

In the following, the term ``bordered'' Riemann surface refers to a
Riemann surface with boundary in the standard sense (see e.g.
\cite{AhlforsSario}).  That is, there is an atlas for the Riemann
surface such that each point of the boundary is contained in the
domain of a chart onto a relatively open subset of the closed upper
half plane, which takes the boundary to a finite open subinterval
$(a,b)$ of the real line, and furthermore the overlap maps of the
atlas are holomorphic on their interiors.

\begin{definition}  We say that $\riem$ is a bordered Riemann
 surface of type $(g,n)$ if it is a bordered Riemann surface
 such that (1) its boundary consists of $n$ ordered closed
 curves homeomorphic to $S^1$ and (2) it is biholomorphically equivalent to
 a compact Riemann surface of genus $g$ with $n$ simply-connected
 non-overlapping regions, each biholomorphic to a disk,
 removed.
\end{definition}

We denote the boundary of $\riem$ by $\partial \riem$ and the $i$th
boundary component by $\partial_i \riem$. A map $S^1 \to \partial_i
\riem$ is called a \textit{boundary parametrization} or
\textit{rigging}. Following \cite{RadSchip05}, the class of such mappings will be quasisymmetric
as defined below.

\begin{definition} \label{quasisymmetricline}
 An (orientation preserving)
 homeomorphism
$$h:\overline{\mathbb R} \rightarrow
 \overline{\mathbb R}
$$ is $k$-quasisymmetric if there exists a
 constant $k$ such that
 \[  \frac{1}{k} \leq \frac{h(x+t)-h(x)}{h(x)-h(x-t)} \leq k  \]
 for all $x,\,t \in \overline{\mathbb R}$.  If $h$ is
 quasisymmetric for some unspecified $k$ it is simply called
 quasisymmetric.
\end{definition}

We find it more convenient to work on $S^1$ than on
$\overline{\mathbb R}$.  It is also necessary to speak of
quasisymmetry of a mapping on a closed boundary curve of a Riemann
surface. The map $T(z) = i(1+z)/(1-z)$ sends the unit circle to
$\overline{\Bbb{R}}$ with $T(1) = \infty$.

For $r\neq 1$, let $\annold(r) \subset \mathbb{C}$ be the annulus bounded by circles of radius $1$ and $r$.
\begin{definition} \label{quasisymmetriccircle}
 Let $h: S^1 \rightarrow S^1$ be a homeomorphism.
 \begin{enumerate}
 \item Let $e^{i\theta}$ be chosen so that $e^{i\theta}h(1)=1$.
Then we say that $h$ is quasisymmetric
 if $T \circ e^{i\theta} h \circ T^{-1}$ is quasisymmetric according to
 Definition \ref{quasisymmetricline}.
 \item Let $C$ be a connected component of the boundary of a bordered Riemann
 surface, $h$ be a homeomorphism of $C$ into $S^1$, and $\annold_C$ be an annular neighbourhood of $C$.  We say that
 $h$ is quasisymmetric if, for any biholomorphism $F: \annold_C
 \rightarrow \annold(r)$, $h \circ F^{-1}$ is
 quasisymmetric on $S^1$ in the sense of part one.
 \end{enumerate}
\end{definition}
Note that if $r<1$ then the above map is orientation preserving,
otherwise it is orientation reversing.

A map is quasisymmetric if and only if it is the boundary value of a
quasiconformal map on a collared neighbourhood of the boundary. This
follows from the Ahlfors-Beurling extension theorem \cite{Lehto}.

 We now describe the sewing of arbitrary Riemann surfaces.
Let $\riem_1$ and $\riem_2$ be bordered Riemann surfaces of type
$(g_1,n_1)$ and $(g_2,n_2)$ respectively where $n_1 > 0 $ and
$n_2>0$.  Let $C_1$ and $C_2$ each be a boundary component of
$\riem_1$ and $\riem_2$ respectively, and let $\psi_i$ be
oppositely oriented quasisymmetric parametrizations of $C_i$ (that
is, quasisymmetric maps $\psi_i: S^1 \to C_i$  for $i=1,2$, in the
sense of Definition \ref{quasisymmetriccircle} with say $r_1>1$
and $r_2 <1$). l. Note that $\psi_2 \circ \psi_1^{-1} : C_1 \to
C_2$ is an orientation reversing map.

Let $\riem_1 \# \riem_2 = \riem_1 \sqcup \riem_2 / \sim$ where $x
\sim y$ if and only if $x \in C_1$, $y \in C_2$ and $(\psi_2 \circ
\psi_1^{-1})(x) = y$.  $C_1$ and $C_2$ correspond to a common curve
on $\riem_1 \# \riem_2$.

\begin{theorem}{\cite[Theorems 3.2 and 3.3]{RadSchip05}}
\label{th:sewing} There is a unique complex structure on $\riem_1 \#
\riem_2$ which is compatible with the original complex structures on
$\riem_1$ and $\riem_2$.
\end{theorem}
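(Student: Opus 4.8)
The plan is to prove Theorem~\ref{th:sewing}, the existence and uniqueness of a complex structure on $\riem_1 \# \riem_2$ compatible with the original structures. Let me sketch how I would approach this.

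The plan is to prove existence and uniqueness separately, with all the genuine work concentrated on a neighbourhood of the common curve (the \emph{seam}), since away from the seam any compatible structure is forced to coincide with the given structures on $\riem_1$ and $\riem_2$. For \textbf{uniqueness} I would argue as follows. If $X$ and $X'$ are two compatible complex structures on $\riem_1 \# \riem_2$, then the identity map $(\riem_1 \# \riem_2, X) \to (\riem_1 \# \riem_2, X')$ is a homeomorphism which is biholomorphic off the seam, because both structures restrict to the original structure on each of $\riem_1 \setminus C_1$ and $\riem_2 \setminus C_2$. The existence construction below shows that the seam is locally a quasicircle; I would then invoke the conformal removability of quasicircles to conclude that a homeomorphism which is conformal off such a curve is in fact conformal across it, so that the identity is biholomorphic and $X = X'$.

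For \textbf{existence}, the complex structures on the open pieces $\riem_1 \setminus C_1$ and $\riem_2 \setminus C_2$ are already prescribed, so I only need to produce holomorphic charts on neighbourhoods of the seam that are compatible with these. Passing to annular collar neighbourhoods of $C_1$ and $C_2$ and using the biholomorphisms with standard annuli in the sense of Definition \ref{quasisymmetriccircle}, the local model near a seam point is two annuli glued along $S^1$ by a quasisymmetric homeomorphism $h$, the coordinate representative of $\psi_2 \circ \psi_1^{-1}$. The quotient topology makes this local model a topological surface (two boundary circles identified homeomorphically), so the only issue is the holomorphic structure. I would construct a chart across the seam by conformal welding.

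Concretely, by the Ahlfors--Beurling extension theorem quoted above, $h$ extends to a quasiconformal map $H$ of a collar. Transporting the Beltrami coefficient of $H$ to $\Chat$ (setting it equal to zero on the exterior side and to $\mu_H$ on the interior side) and solving the Beltrami equation via the measurable Riemann mapping theorem yields conformal maps $f$ and $g$ of the two sides whose boundary values satisfy $f = g \circ h$ on $S^1$. The welded image is a quasicircle $\Gamma$, and the map $w$ defined to equal $f$ on the $\riem_1$-side and $g$ on the $\riem_2$-side is a homeomorphism onto a neighbourhood of $\Gamma$ which is conformal on each side and matches continuously along the seam by the welding relation. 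This $w$ is the desired holomorphic chart, and it is compatible with the original structures because its transition to any interior chart is simply $f$ or $g$, hence holomorphic. To finish, I would check that two such welding charts are holomorphically compatible on overlaps: their transition is a homeomorphism conformal off the common seam arc, so conformal removability of the quasicircle again upgrades it to a biholomorphism. Assembling the interior charts and the seam charts then gives a holomorphic atlas compatible with the original structures.

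The main obstacle is precisely the seam, where $h$ is only quasisymmetric and admits no holomorphic extension. Everything hinges on conformal welding---itself built from the quasiconformal extension of $h$ together with the measurable Riemann mapping theorem---which is what manufactures genuinely holomorphic coordinates across the seam; and on the conformal removability of the resulting quasicircle, which is what both patches the welding charts together and forces uniqueness.
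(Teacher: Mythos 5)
Your proposal is correct and follows essentially the same route as the paper, which defers to \cite[Theorems 3.2 and 3.3]{RadSchip05} and notes only that ``the proof of this theorem is based on conformal welding'': you build the seam charts by quasiconformally extending the quasisymmetric transition map, solving the Beltrami equation, and welding, and you use removability of quasicircles \cite[V.3]{LV} both to patch charts and to force uniqueness, exactly as in the cited source. No gaps worth flagging at the level of a sketch.
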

The proof of this theorem is based on conformal welding.
\end{subsection}
\begin{subsection}{The moduli space of rigged annuli}

In this section we describe the Neretin-Segal semigroup of rigged
annuli.   In the rest of the paper we will use another model of
this moduli space, which will be described in Section
\ref{se:puncture_model_annuli}. The model in this section is
included because it is more immediately understandable (this is
especially true of the multiplication). Both models are well-known
in conformal field theory to be equivalent, but we must establish
this rigorously in the quasisymmetric setting. This will be done
in the next section.

\begin{definition}
\label{de:M02}
 Consider the set of ordered pairs $(A,\phi)$
 where
 \begin{enumerate}
 \item $A$ is a bordered Riemann surface of type $(0,2)$ (i.e.
 doubly-connected) with boundary curves $\partial_i A$, $i=1,2$ and
 \item $\phi=(\phi_1,\phi_2)$ where $\phi_i:S^1 \rightarrow
 \partial_i A$ are quasisymmetries that are respectively orientation reversing and preserving.
 \end{enumerate}
 We define
 \[  \widetilde{\mathcal{M}}(0,2) =\{(A,\phi)\}/\sim  \]
 where $(A,\phi) \sim (B,\psi)$ if there exists a biholomorphism
 $\sigma:A \rightarrow B$ such that $\sigma \circ \phi = \psi$.
 We denote equivalence classes by $[A,\phi]$.
\end{definition}

We have made a slight but fundamental change to the standard
definition: the boundary parametrizations are quasisymmetries. As
was mentioned in the introduction, our choice makes it possible to
connect the moduli space of rigged annuli to the Teichm\"uller space
of the annulus. It is not possible to do this with diffeomorphisms
or analytic diffeomorphisms.

\begin{remark}
 The rigged moduli space of annuli is a special case of a more
 general concept from conformal field theory, that of the rigged
 Riemann surface.  Given a bordered Riemann surface $\Sigma$ of
 type $(g,n)$,
 we denote the ordered set of quasisymmetric boundary
 parametrizations  by $\psi = \left( \psi^1, \ldots,
 \psi^n \right)$. The pair $(\riem, \psi)$ is called a
 rigged Riemann surface.
We define an equivalence relation on the set $\{(\riem,\psi)\}$ of
type $(g,n)$ rigged Riemann surfaces: $(\riem_1, \psi_1) \sim
(\riem_2, \psi_2)$ if and only if there exists a biholomorphism
$\sigma :\riem_1 \to \riem_2$ such that $\psi_2 =  \sigma \circ
\psi_1 $. The moduli space of rigged Riemann surfaces is
 \[ \widetilde{\mathcal{M}}(g,n) = \{ (\riem, \psi )\} / \sim.
 \]
\end{remark}

\begin{remark}
\label{re:in_and_out} In \cite{RadSchip05} all boundary
parametrizations are positively oriented and each boundary component
is specified as \textit{incoming} or \textit{outgoing} by an
assignment of the symbol $-$ or $+$ respectively. In the current
setting this information is specified by the choice of orientation
of each rigging, with orientation preserving corresponding to $+$.
Hence $\mba$ corresponds to $\widetilde{\mathcal{M}}^B(g, n^-, n^+)$
with $g = n^- = n^+ = 1$ in \cite[Definition 5.3]{RadSchip05}.

\end{remark}

 Next, we describe the multiplication operation in
 $\mba$.
 \begin{definition} \label{de:border_multiplication}
  The product of two elements $[A,\phi], [B,\psi]$ of $\mba$ is
  the rigged annulus
  \[  [A,\phi] \times [B,\psi] = [A \# B,\rho], \]
  where $A$ and $B$ are sewn together along $\partial_1 A$ and $\partial_2 B$
  via $\psi_2 \circ \phi_1^{-1}$ and $\rho = (\psi_1,\phi_2)$.
 \end{definition}

 \begin{remark}
  One may find a uniformizing biholomorphism $G: A \# B
  \rightarrow D$ onto an annulus $D \subset \mathbb{C}$; in that case, the
  joining curve will map to a quasicircle and the new boundary
  parametrization will be $(G \circ \rho_1, G \circ \rho_2)$.
 \end{remark}
\end{subsection}
\end{section}
\begin{section}{The non-overlapping mapping model of the semigroup of
 rigged annuli} \label{se_top:puncture_model}
\begin{subsection}{Non-overlapping mappings into the Riemann sphere}
\label{se:puncture_model_annuli}
 In this section, we give an alternate definition of the moduli
 space of rigged annuli, in terms of non-overlapping mappings into
 the Riemann sphere.  The equivalence of the two models
 (in the sense that there is a one-to-one correspondence)
 is well-known \cite{Neretin_holomorphic, Segal} in the case of
 analytic or diffeomorphic parametrizations.   We establish here the
 equivalence with our choice of
 riggings.  The proof of the equivalence relies on the technique of
 conformal welding.

 We must first  choose the correct analytic
 conditions on the set of non-overlapping mappings, to match the
 quasisymmetric riggings.  The obvious choice is the set of
 univalent maps with quasiconformal extensions.  These restrict to
 quasisymmetries on the boundary, and conversely by the Ahlfors-Beurling
 extension theorem any quasisymmetry
 is the boundary value of such a quasiconformally extendible
 mapping.

 We now give the precise description of the alternate
 model.
 Let $\mathbb{D}=\{z\,:\,|z|<1\}$ and $\mathbb{D}^*=\{z\,:\,|z|>1\}
\cup \{\infty\}$.
 Note that the boundary $\partial \mathbb{D}^*$ is $S^1$ with clockwise orientation.

 \begin{definition}
 \label{de:Ao}
  Let  $\mathcal{A}^o = \{(f,g)\}$ where $f:\mathbb{D} \rightarrow \mathbb{C}$,
  $g:\mathbb{D}^* \rightarrow \overline{\mathbb{C}}$ are one-to-one holomorphic maps satisfying
  \begin{enumerate}
    \item $f$ has a quasiconformal extension to $\mathbb{C}$ and $g$ has a quasiconformal extension to $\overline{\mathbb{C}}$.
   \item $f(\overline{\mathbb{D}}) \cap g(\overline{\mathbb{D}^*}) = \emptyset$
   \item $f(0)=0$
   \item $g(\infty)=\infty$, $g'(\infty)=1$
  \end{enumerate}
 \end{definition}

 It is advantageous to consider an enlargement of this set of annuli, to
 include ``degenerate'' annuli whose two boundary components might touch.
 It is difficult to express this enlargement in terms of Riemann surfaces
 with boundary parametrizations.  However it is easily expressed
 in terms of non-overlapping holomorphic maps.
 \begin{definition}
 Let  $\mathcal{A} = \{(f,g)\}$ where $f:\mathbb{D} \rightarrow \mathbb{C}$, $g:\mathbb{D}^* \rightarrow \overline{\mathbb{C}}$ are one-to-one holomorphic maps satisfying
  \begin{enumerate}
    \item $f$ has a quasiconformal extension to $\mathbb{C}$ and $g$ has a quasiconformal extension to $\overline{\mathbb{C}}$.
   \item $f(\mathbb{D}) \cap g(\mathbb{D}^*) = \emptyset$
   \item $f(0)=0$,
   \item $g(\infty)=\infty$, $g'(\infty)=1$
  \end{enumerate}
 \end{definition}

 This extension of the rigged annuli has several advantages: first,
 it has an identity element (it is a monoid), second, it contains
 the group of quasisymmetries of $S^1$.

In order to show the correspondence between $\mba$ and the
Teichm\"uller space of the annulus, it is necessary to describe
the operation of sewing Riemann surfaces using quasisymmetric
maps.  We do this now. Define the punctured closed disks $\pcdisk
= \{z\,:\,0<|z|\leq1\}$ and $\pcdiski =
\{z\,:\,1\leq|z|<\infty\}$, considered as subsets of $\cbar$, and
let $\mathbb{D}_0$ and $\mathbb{D}_\infty$ denote their respective
interiors.

Given an annulus $A$ we can obtain a twice punctured genus-zero
Riemann surface in the following way. For details and the general
case of type $(g,n)$ surfaces, see \cite[Section 3]{RadSchip05}. Let
$[A,\tau] \in \mba$ (see Definition \ref{de:M02}) where
$\tau=(\tau_0,\tau_{\infty})$, and  $\tau_0:\partial \mathbb{D}
\rightarrow \partial_1 A$ and $\tau_{\infty}:\partial \mathbb{D}^*
\rightarrow \partial_2 A$ are fixed quasisymmetric mappings (see
Definition \ref{quasisymmetriccircle}). We sew on the punctured
disks $\pcdisk$ and $\pcdiski$ as follows.

Consider the disjoint union of $A$, $\pcdisk$ and $\pcdiski$.
Identifying boundary points using  $\tau$, the result is a compact
surface $\riem^P$ with punctures $p_0$ and $p_1$ corresponding to
the punctures $0$ and $\infty$ of $\pcdisk$ and $\pcdiski$,
respectively. That is, let
$$
\riem^P = (A \sqcup \pcdisk \sqcup \pcdiski ) /\sim
$$
where $p \sim q$ if and only if $p \in \partial_1 A$, $q \in
\partial \mathbb{D}$, and $p=\tau_0(q)$, or $p \in \partial_2 A$, $q \in
\partial \mathbb{D}^*$ and $p=\tau_\infty(q)$.   By Theorem \ref{th:sewing}, $\riem^P$
has a unique complex structure which is compatible with that of
both $A$ and the disks $\pcdisk$ and $\pcdiski$. If $\riem^P$ is
obtained from $A$ in this way we will say that $\riem^P$ is
obtained by ``sewing caps on $A$ via $\tau$'' and we write
\begin{equation}
\label{riemP} \riem^P = A \#_{\tau} (\pcdisk \sqcup \pcdiski) .
\end{equation}
The parametrizations $\tau_0$ can be extended continuously to a
map $\tilde{\tau}_0: \pcdisk \to \riem^P$ to the caps of $\riem^P$
by
\begin{equation}
\label{eq:tauextension_definition} \tilde{\tau}_0(x) =
\begin{cases}
\tau_0(x) , & \text{for }  x \in \partial \disk \\
x , & \text{for } x \in \disk_0 .
\end{cases}
\end{equation}
This map is a biholomorphism on $\disk_0$ and has a quasiconformal
extensions to a neighbourhood of $\pcdisk$. Similarly,
$\tau_{\infty}$ can be extended to a map $\tilde{\tau}_{\infty}:
\pcdiski \to \riem^P$.  They can also be extended analytically
across $0$ and $\infty$.

We can now make the identification between $\mathcal{A}^0$ and
$\mba$ as follows.  Let $[A,\tau] \in \mba$ have representative
$(A,\tau)$. Sew on caps following the procedure above to obtain the
triple $(\riem^P,\tilde{\tau}_1,\tilde{\tau}_2)$.  Since $\riem^P$
is a genus zero Riemann surface with two punctures, there exists a
biholomorphism $H: \riem^P \rightarrow \cpunc$.  There is a unique
such $H$ such that the conformal extension $g$ of $H \circ
\tilde{\tau}_\infty$ satisfies $g'(\infty)=1$.  We then define
\begin{equation} \label{eq:puncture_border_identification}
 R([A,\tau])= (f,g)
\end{equation}
where $g$ is the analytic extension of $\left. H \circ
\tilde{\tau}_\infty \right|_{\mathbb{D}^*}$ across $\infty$ and $f$
is the analytic extension of $\left. H \circ
\tilde{\tau}\right|_{\mathbb{D}}$ across $0$.

\begin{theorem}
\label{th:mba_Ao_iso}
$R:\mba \rightarrow \mathcal{A}^0$ is a well-defined, one-to-one, onto map.
\end{theorem}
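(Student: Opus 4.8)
The plan is to verify well-definedness, injectivity, and surjectivity in turn, in each case unwinding the cap-sewing construction and leaning on the sewing theorem (Theorem~\ref{th:sewing}) together with the structure of $\Aut(\cpunc)$.

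For well-definedness there are two choices to control. The choice of representative $(A,\tau)$ is harmless: a biholomorphism $\nu:A\to B$ with $\nu\circ\tau=\sigma$ extends by the identity on the caps to a biholomorphism $\riem^P_A\to\riem^P_B$ intertwining $\tilde\tau$ and $\tilde\sigma$, so the normalized output is unchanged. The choice of uniformization $H:\riem^P\to\cpunc$ is forced: since $\riem^P$ has genus zero with two punctures $p_0,p_1$, two uniformizations differ by an element of $\Aut(\cpunc)$; demanding that $g=H\circ\tilde\tau_\infty$ fix $\infty$ forces $H(p_1)=\infty$, hence $H(p_0)=0$, and among the residual automorphisms $z\mapsto\lambda z$ the condition $g'(\infty)=1$ pins down $\lambda$. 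This also makes $f(0)=H(p_0)=0$ automatic. It then remains to check $(f,g)\in\mathcal{A}^o$: holomorphy and injectivity of $f,g$ come from those of $H$ and of $\tilde\tau_0,\tilde\tau_\infty$ on the open caps, quasiconformal extendibility is inherited from the quasiconformal extensions recorded after (\ref{eq:tauextension_definition}), and $f(\overline{\mathbb{D}})\cap g(\overline{\mathbb{D}^*})=\emptyset$ holds because the two closed caps are disjoint in $\riem^P$ (separated by the nondegenerate annulus $A$) and $H$ is injective.

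Injectivity and surjectivity are essentially the construction run backwards. If $R([A,\tau])=R([B,\sigma])=(f,g)$ with uniformizations $H_A,H_B$, then both carry their annulus onto the same doubly-connected region $D\subset\cpunc$ bounded by the quasicircles $f(\partial\mathbb{D})$ and $g(\partial\mathbb{D}^*)$, so $\nu=H_B^{-1}\circ H_A$ is a biholomorphism $A\to B$; comparing boundary values, $f|_{\partial\mathbb{D}}=H_A\circ\tau_0=H_B\circ\sigma_0$ gives $\nu\circ\tau_0=\sigma_0$ and likewise $\nu\circ\tau_\infty=\sigma_\infty$, so $[A,\tau]=[B,\sigma]$. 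For surjectivity, given $(f,g)\in\mathcal{A}^o$ I would set $A=\cbar\setminus(f(\overline{\mathbb{D}})\cup g(\overline{\mathbb{D}^*}))$, a genuine annulus in $\cpunc$ (since $0=f(0)$ and $\infty=g(\infty)$ lie in the removed caps) with boundary quasicircles $f(\partial\mathbb{D}),g(\partial\mathbb{D}^*)$, and take the riggings $\tau_0=f|_{\partial\mathbb{D}}$, $\tau_\infty=g|_{\partial\mathbb{D}^*}$, which are quasisymmetric as restrictions of quasiconformal maps. Sewing caps via $\tau$ produces $\riem^P$, and I would define $H$ to be the identity on $A$, $f$ on $\pcdisk$, and $g$ on $\pcdiski$; these agree along the joining curves by the very definition of $\tau$, and once $H$ is known to be biholomorphic we have $H\circ\tilde\tau_\infty=g$ with $g'(\infty)=1$, identifying $H$ as the normalized uniformization and giving $R([A,\tau])=(f,g)$.

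The main obstacle is showing that the tautological map $H$ in the surjectivity step is genuinely holomorphic across the joining quasicircles; this is exactly the content of conformal welding and the removability of quasicircles. I expect to obtain it for free from the uniqueness clause of Theorem~\ref{th:sewing}: $H$ is a homeomorphism of $\riem^P$ onto $\cpunc$ that is holomorphic off the joining curves and compatible with the complex structures of $A$ and of the caps, so it must be biholomorphic for the unique sewn structure. Everything else reduces to bookkeeping with orientations and with the normalizations $f(0)=0$, $g(\infty)=\infty$, and $g'(\infty)=1$.
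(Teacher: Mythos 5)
Your proof is correct, but it is genuinely different from what the paper does: the paper does not give a direct argument at all, instead citing the general type-$(g,n)$ correspondence \cite[Theorem 5.1]{RadSchip05} specialized to $g=1$, $n^-=n^+=1$, whereas you supply the self-contained proof that the authors only assert ``can be given fairly easily.'' Your route uses exactly the right ingredients for the special case: uniqueness of the sewn complex structure (Theorem \ref{th:sewing}) to make the tautological map $H$ (identity on $A$, $f$ and $g$ on the caps) a biholomorphism, the classification of automorphisms of the twice-punctured sphere to pin down the normalized uniformization, and the tautological construction $A=\cbar\setminus\bigl(f(\overline{\disk})\cup g(\overline{\disk^*})\bigr)$ for surjectivity. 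This last point is a real simplification available here but not in the Teichm\"uller-space correspondence (Theorem \ref{th:TA_is_M}), where surjectivity requires producing a quasiconformal map with prescribed quasisymmetric boundary values via the extended lambda-lemma; by targeting the moduli space directly you avoid that machinery entirely, at the cost of losing the uniformity of the general-$(g,n)$ statement the paper leans on. Two points you wave at deserve one more line each if this were written out: the quasiconformal extendibility of $f$ to all of $\mathbb{C}$ is most cleanly obtained from the fact that $f(\disk)$ is a quasidisk (its boundary $H(\partial_1 A)$ is a quasicircle because the cap is sewn on via a quasisymmetry), rather than directly from the local extension of $\tilde{\tau}_0$; and the orientation bookkeeping you defer is what makes $\tau_0=f|_{\partial\disk}$ orientation \emph{reversing} and $\tau_\infty=g|_{\partial\disk^*}$ orientation preserving, as Definition \ref{de:M02} requires. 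Neither is a gap.
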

\begin{proof}  Although a direct proof can be given fairly easily,
we refer to \cite[Theorem 5.1]{RadSchip05} with $g=1$, $n^-=1$ and
$n^+=1$ for the sake of brevity (see Remark \ref{re:in_and_out}).
\end{proof}

\end{subsection}

\begin{subsection}{The sewing equations and multiplication}
 We now give the formula for multiplication in the non-overlapping
 mapping model.   This formula was obtained by Huang
 \cite{Huang}.  It is necessary here to give a proof for the class
 of quasisymmetric riggings, which can be accomplished with the technique of conformal welding.  This introduces no
 difficulties and is no more tedious than the original procedure for sewing,
 although it does require a deeper uniformization result (ultimately
 relying on the measurable Riemann mapping theorem).

 We will need only the following theorem.  It is a standard
 fact, although usually presented with a different choice of
 normalizations.  Since the normalizations are of some importance,
 we include a proof.
 \begin{theorem}[conformal welding] \label{th:conformalwelding}
  Fix $a \in \mathbb{C} \backslash \{0\}$.
  Let $\phi:S^1 \rightarrow S^1$ be a quasisymmetric mapping.  There exists a
  unique pair $(F,G)$, such that $F:\mathbb{D} \rightarrow \mathbb{C}$ and
  $G:\mathbb{D}^* \rightarrow \overline{\mathbb{C}}$ are one-to-one holomorphic maps with
  quasiconformal extensions to $\overline{\mathbb{D}}$ satisfying
  \begin{enumerate}
   \item $F(\partial \mathbb{D})=G(\partial \mathbb{D}^*)$ as sets
   \item $F(0)=0$, $G(\infty)=\infty$ and $G'(\infty)=a$
   \item $\phi = \left. G^{-1} \circ F \right|_{S^1}$.
  \end{enumerate}
 \end{theorem}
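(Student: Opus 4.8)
The plan is to construct the pair $(F,G)$ by a quasiconformal surgery based on the measurable Riemann mapping theorem, and then to extract uniqueness from the normalizations together with the conformal removability of quasicircles.

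For existence, I would first invoke the Ahlfors--Beurling extension theorem to extend the quasisymmetry $\phi$ to a quasiconformal homeomorphism $w_2:\mathbb{D}^*\to\mathbb{D}^*$ fixing $\infty$ and with boundary values $w_2|_{S^1}=\phi$. I then define a Beltrami differential $\mu$ on the sphere by setting $\mu=\mu_{w_2}$ on $\mathbb{D}^*$ and $\mu=0$ on $\mathbb{D}$; since $\|\mu_{w_2}\|_\infty<1$, the measurable Riemann mapping theorem produces a quasiconformal $w^\mu:\overline{\mathbb{C}}\to\overline{\mathbb{C}}$ solving $\bar\partial w^\mu=\mu\,\partial w^\mu$. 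Because $\mu=0$ on $\mathbb{D}$, the restriction $F:=w^\mu|_{\mathbb{D}}$ is conformal; because $w^\mu$ and $w_2$ share the same Beltrami coefficient on $\mathbb{D}^*$, the composition $G:=w^\mu\circ w_2^{-1}|_{\mathbb{D}^*}$ is conformal there as well. Tracking boundary values on $S^1$, where $w_2^{-1}$ reduces to $\phi^{-1}$ and $w^\mu$ is continuous across the circle, yields $G^{-1}\circ F=\phi$, which is condition (3); condition (1) holds because $w^\mu(S^1)$ is the common boundary of the complementary Jordan domains $F(\mathbb{D})$ and $G(\mathbb{D}^*)$. Since $w^\mu$ is globally quasiconformal and $w^\mu(S^1)$ is a quasicircle, both $F$ and $G$ automatically admit the required quasiconformal extensions across $S^1$, and $F(\mathbb{D})\subset\mathbb{C}$ since $\infty\in G(\mathbb{D}^*)$.

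The normalization (2) I would arrange last, using the residual freedom in the surgery. The solution $w^\mu$ is unique up to post-composition with a Möbius transformation; imposing $w^\mu(0)=0$ and $w^\mu(\infty)=\infty$ forces $F(0)=0$ and $G(\infty)=\infty$ and cuts the ambiguity down to a scaling $z\mapsto\lambda z$. Since $G$ is conformal at $\infty$, the quantity $G'(\infty)$ is well defined and scales linearly in $\lambda$, so there is a unique $\lambda\in\mathbb{C}^*$ achieving $G'(\infty)=a$. This completes existence.

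For uniqueness, suppose $(F_1,G_1)$ and $(F_2,G_2)$ both satisfy (1)--(3). On the common welding curve $\Gamma_1=F_1(S^1)=G_1(S^1)$, the two relations $F_i=G_i\circ\phi$ force $F_2\circ F_1^{-1}=G_2\circ G_1^{-1}$ there, so these maps glue to a single homeomorphism $M$ of the sphere that is conformal off $\Gamma_1$ and quasiconformal across it. Here lies the one genuinely nontrivial point: because $\Gamma_1$ is a quasicircle it has zero area and is conformally removable, so $M$ is conformal on all of $\overline{\mathbb{C}}$, hence Möbius. The normalizations then finish the argument: $F(0)=0$ and $G(\infty)=\infty$ give $M(z)=\lambda z$, and $G_2'(\infty)=G_1'(\infty)=a$ forces $\lambda=1$, whence $(F_1,G_1)=(F_2,G_2)$. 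I expect the removability/area-zero step to be the main obstacle, with the Beltrami-coefficient cancellation and the boundary computation $G^{-1}\circ F=\phi$ being the remaining points that require care.
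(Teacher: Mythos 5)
Your existence construction is exactly the one in the paper: extend $\phi$ by Ahlfors--Beurling to $w_\mu:\mathbb{D}^*\to\mathbb{D}^*$, solve the Beltrami equation for $w^\mu$ with dilatation $\mu$ on $\mathbb{D}^*$ and $0$ on $\mathbb{D}$, and set $F=w^\mu|_{\mathbb{D}}$, $G=w^\mu\circ w_\mu^{-1}|_{\mathbb{D}^*}$; the boundary computation $G^{-1}\circ F=w_\mu|_{S^1}=\phi$ and the normalization by the residual M\"obius freedom are handled the same way (the paper simply builds the normalization $(w^\mu\circ w_\mu^{-1})'(\infty)=a$ into the choice of $w^\mu$ rather than rescaling afterwards). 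Where you diverge is uniqueness. You compare two solutions $(F_1,G_1)$, $(F_2,G_2)$ directly, gluing $F_2\circ F_1^{-1}$ and $G_2\circ G_1^{-1}$ along the welding curve into a sphere homeomorphism that is conformal off a quasicircle, and invoking quasiconformal removability of quasicircles plus the fact that quasicircles have zero area to conclude the glued map is M\"obius, then killing it with the normalizations. The paper instead uses that same glue-and-remove argument one step earlier, to show that the construction $\mathcal{T}(\phi)=(F,G)$ is independent of the choice of quasiconformal extension $w_\mu$; uniqueness is then obtained by verifying $\mathcal{T}\circ\mathcal{S}=\mathrm{id}$, where $\mathcal{S}(F,G)=G^{-1}\circ F$, via the clever observation that for any extension $F^\mu$ of $F$ the map $G^{-1}\circ F^\mu$ is an admissible $w_\mu$ whose associated $w^\mu$ is $F^\mu$ itself. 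The two routes rest on the identical key lemma (removability of quasicircles, \cite[V.3]{LV}) and are of comparable length; yours is the more classical ``welding is unique up to M\"obius'' argument, while the paper's packages the well-definedness and uniqueness into the single statement that $\mathcal{T}$ and $\mathcal{S}$ are mutually inverse, which is the form it reuses later. One small caution: zero area alone does not give conformal removability --- you need the quasiconformal removability of quasicircles first (so the glued map is globally quasiconformal) and only then does the measure-zero statement force its dilatation to vanish a.e.; you state both ingredients, so the argument is sound, but keep the logical order straight.
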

 \begin{proof}
  Let $w_\mu:\mathbb{D}^* \rightarrow \mathbb{D}^*$ be a
  quasiconformal extension of $\gamma$, in the sense that it
  extends homeomorphically to $\overline{\mathbb{D}^*}$ and
  satisfies $\left. w_\mu \right|_{S^1}=\phi$.  Let $\mu$
  be the complex dilatation of this extension.  Such a
  quasiconformal extension exists by the Ahlfors-Beurling
  extension theorem \cite{Lehto}.  However, it is not unique.

  Let $w^\mu:\overline{\mathbb{C}} \rightarrow
  \overline{\mathbb{C}}$ be the unique quasiconformal map with dilatation
  $\mu$ on $\mathbb{D}^*$ and $0$ on $\mathbb{D}$, satisfying the
  normalization $w^\mu(0)=0$, $w^\mu \circ {w_\mu}^{-1}(\infty) =
  \infty$, $(w^\mu \circ {w_\mu}^{-1})'(\infty) = a$.  ($w^\mu$ is
  unique in the sense that it is fixed by $\mu$ and these
  normalizations).  Note that $w^\mu \circ {w_\mu}^{-1}$ is
  holomorphic.  Now set
  \[  F = \left. w^\mu \right|_{\mathbb{D}} \ \ \ \mbox{and} \ \ \
      G = \left. w^\mu \circ w_\mu^{-1} \right|_{\mathbb{D}^*}.  \]
  Both maps are holomorphic on their domains, have quasiconformal
  extensions, and clearly $\left. G^{-1} \circ F \right|_{S^1}= \left. w_\mu
  \right|_{S^1}= \phi$.  It is also clear that $F$ and $G$ satisfy
  the desired normalizations.

  Now we show that $F$ and $G$ are uniquely determined by $\phi$.
   Given $\phi \in QS(S^1)$ define
  $\mathcal{T}(\phi)=(F,G)$ where $F$ and $G$ are given
  by the above procedure.  First we show that $\mathcal{T}$ is
  well-defined.  Say $w_{\mu}$ and $w_\nu$ are two different
  quasiconformal extensions of $\phi$ with dilatations $\mu$ and
  $\nu$ respectively.  Let
 $$
 S(z) = \begin{cases}
  w^\mu \circ ({w^\nu})^{-1}
     (z), & \text{if }  z \in w^\nu(\mathbb{D}) \\
     w^\mu \circ (w_{\mu})^{-1}
     \circ w_{\nu} \circ (w^{\nu})^{-1}(z), & \text{if }   z \in w^\nu(\mathbb{D^*}).
 \end{cases}
 $$

  $S$ is quasiconformal on each piece, and extends to a one-to-one
  continuous map of $\overline{\mathbb{C}}$ since ${w_\mu}^{-1}
  \circ w_\nu$ is the identity on $S^1$.  Thus $S$ is
  quasiconformal on $\overline{\mathbb{C}}$, by removability
  of quasicircles \cite[V.3]{LV}.  Since the dilatation
  of $S$ is zero on each piece, $S$ is in fact conformal.  It is
  easily checked that $S(0)=0$, $S(\infty)=\infty$ and
  $S'(\infty)=1$, so $S(z)=z$.  In particular $w^\mu=w^\nu$ on
  $\mathbb{D}$ and $w^\mu \circ {w_\mu}^{-1} = w^\nu \circ
  {w_\nu}^{-1}$ on $\mathbb{D}^*$.

  Denote the potential inverse of $\mathcal{T}$ by
  $\mathcal{S}(F,G)=G^{-1} \circ F$.  The second paragraph of
  the proof shows
  that $\mathcal{S}$ is surjective.  We need to show that
  $\mathcal{S}$ is injective; to do this we show that $\mathcal{T} \circ
  \mathcal{S}$ is the identity map.  Given $(F,G)$ satisfying the
  required normalization, let $F^\mu$ be a quasiconformal
  extension of $F$ to the sphere with dilatation $\mu$, say.  Then
  $w_\mu=G^{-1} \circ F^\mu$ is a quasiconformal extension of
  $G^{-1} \circ F$ to $\mathbb{D}^*$, and the corresponding
  $w^\mu$ is clearly $F^\mu$.  Thus $\mathcal{T} \circ \mathcal{S}
  (F,G)=(F,G)$.
 \end{proof}

 Next we give the formula for multiplication of annuli.
 The notation here is unfortunately somewhat involved, since we
 need four conformal maps associated with each element, along with some
 quasisymmetric riggings.

 Let $(f^0,g^\infty) \in \mathcal{A}$ be a non-overlapping pair of conformal maps
 $f:\mathbb{D} \rightarrow \mathbb{C}$ and $g:\mathbb{D}^*
 \rightarrow \mathbb{C}$ with quasiconformal extensions. This is a representation of a rigged annulus in the case that
 $(f^0,g^\infty) \in \mathcal{A}^o$.  These
 two maps $f^0$ and $g^\infty$ each have `complementary' conformal
 mapping functions, i.e. the conformal maps onto the complement of
 their image.  We will denote these complementary maps by
 $f^\infty:\mathbb{D}^* \rightarrow \cbar \setminus {\overline{f^0(\mathbb{D})}}$ and
 $g^0:  \mathbb{D} \rightarrow \cbar \setminus{\overline{g^\infty(\mathbb{D}^*)}}$
 respectively. We normalize these maps by requiring $f^\infty(\infty) = \infty, (f^\infty)'(\infty)>0, g^0(0) = 0$ and $(g^0)'(0) >0$.

 The element $(f^0,g^\infty)$ also
 has two quasisymmetric mappings corresponding to $0$ and $\infty$
 namely $\phi^0={f^\infty}^{-1} \circ f^0$ and
 $\phi^\infty={g^\infty}^{-1} \circ g^0$.  (Conventions: the upper indices \
 of $f$ and $g$ always distinguish whether the
 mapping is at zero or infinity.  For
 the quasisymmetries, the inverse map on the left is always the one
 defined at $\infty$).

 \begin{theorem}[multiplication in $\mathcal{A}^0$]
 \label{th:annulimultiplication}
  Let $(f_1^0,g^\infty_1) \in \mathcal{A}^0$ and $(f_2^0,g^\infty_2) \in \mathcal{A}^0$.
  Define
  the two quasisymmetries $\phi_1^0 = {f_1^\infty}^{-1} \circ f_1^0$
  and $\phi_2^\infty = {g_2^\infty}^{-1} \circ g_2^0$.

  Let $(F,G)$ be the conformal welding pair such that
  \[  \phi_1^0 \circ \phi_2^\infty={G}^{-1} \circ F,  \]
  where $F$ and $G$ are normalized by $F(0) = 0, G(\infty) = (\infty)$, $G'(\infty) = (f_1^{\infty}) ' (\infty)$.

  The product of the annuli is then given by
  \begin{equation} \label{eq:annulimultiplication}
   (f_1^0,g_1^\infty) \cdot (f_2^0,g_2^\infty) = \left( F
   \circ {g_2^0}^{-1} \circ f_2^0 \:,\: G \circ {f_1^\infty}^{-1}
   \circ g_1^\infty \right).
  \end{equation}
 \end{theorem}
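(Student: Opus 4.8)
The plan is to verify (\ref{eq:annulimultiplication}) by passing through the bijection $R$ of Theorem \ref{th:mba_Ao_iso}: since multiplication in $\mathcal{A}^0$ is defined by transporting the sewing multiplication of $\mba$, it suffices to realize the two factors as spheres with caps via $R^{-1}$, sew $\partial_1 A_1$ to $\partial_2 A_2$ as in Definition \ref{de:border_multiplication}, and check that $R$ applied to the resulting rigged annulus returns the pair on the right-hand side. Write $\tilde{f}^0 = F\circ (g_2^0)^{-1}\circ f_2^0$ and $\tilde{g}^\infty = G\circ (f_1^\infty)^{-1}\circ g_1^\infty$ for its two components. First I would record the geometric meaning of the data of each factor: for $(f_i^0,g_i^\infty)$ the sphere $\overline{\mathbb{C}}$ is divided by the quasicircles $f_i^0(\partial\mathbb{D})$ and $g_i^\infty(\partial\mathbb{D}^*)$ into the $0$-cap $f_i^0(\mathbb{D})$, the annulus $A_i$, and the $\infty$-cap $g_i^\infty(\mathbb{D}^*)$, while the complementary maps realize $f_i^\infty:\mathbb{D}^*\to\overline{\mathbb{C}}\setminus\overline{f_i^0(\mathbb{D})}$ and $g_i^0:\mathbb{D}\to\overline{\mathbb{C}}\setminus\overline{g_i^\infty(\mathbb{D}^*)}$ as uniformizations of ``annulus plus $\infty$-cap'' and ``annulus plus $0$-cap'' respectively.

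The core of the argument is an explicit uniformizing map for the sewn surface. On the region $W_1=\overline{\mathbb{C}}\setminus\overline{f_1^0(\mathbb{D})}$, which carries $A_1$ together with its $\infty$-cap, I would set $U_1=G\circ (f_1^\infty)^{-1}$; on $W_2=\overline{\mathbb{C}}\setminus\overline{g_2^\infty(\mathbb{D}^*)}$, which carries $A_2$ with its $0$-cap, I would set $U_2=F\circ (g_2^0)^{-1}$. These map onto the complementary Jordan domains $G(\mathbb{D}^*)$ and $F(\mathbb{D})$, whose common boundary is the quasicircle $\Gamma=F(\partial\mathbb{D})=G(\partial\mathbb{D}^*)$. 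The decisive point is that $U_1$ and $U_2$ agree along the seam. The sewing identifies $f_1^0(e^{i\theta})\in\partial_1 A_1$ with $g_2^\infty(e^{i\theta})\in\partial_2 A_2$ via $\psi_2\circ\phi_1^{-1}$; tracing these boundary points gives $U_1(f_1^0(e^{i\theta}))=G(\phi_1^0(e^{i\theta}))$ and $U_2(g_2^\infty(e^{i\theta}))=F((\phi_2^\infty)^{-1}(e^{i\theta}))$, so after the substitution $e^{i\theta}=\phi_2^\infty(\eta)$ the required equality reduces to $G\circ(\phi_1^0\circ\phi_2^\infty)=F$ on $S^1$, which is precisely the welding relation $\phi_1^0\circ\phi_2^\infty=G^{-1}\circ F$ defining $(F,G)$ in Theorem \ref{th:conformalwelding}. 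Hence $U_1$ and $U_2$ glue to a single continuous bijection $U$ of the sewn surface onto $\overline{\mathbb{C}}$, holomorphic off $\Gamma$.

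From here the argument mirrors the proof of Theorem \ref{th:conformalwelding}: since $\Gamma$ is a quasicircle and $U$ is holomorphic on each side and continuous across, removability of quasicircles \cite[V.3]{LV} upgrades $U$ to a biholomorphism, identifying it as the uniformization $H$ in the construction of $R$. It then remains to read off the caps and normalizations. The new $0$-cap is the fresh cap sewn on via $\rho_1=\psi_1$, the $0$-rigging of $A_2$, and is therefore attached exactly as $A_2$'s own $0$-cap; in the cap coordinate $U$ restricts to $U_2\circ f_2^0=F\circ (g_2^0)^{-1}\circ f_2^0=\tilde f^0$, and symmetrically the new $\infty$-cap gives $U_1\circ g_1^\infty=G\circ (f_1^\infty)^{-1}\circ g_1^\infty=\tilde g^\infty$. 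The normalizations $\tilde f^0(0)=0$ and $\tilde g^\infty(\infty)=\infty$ follow at once from $f_2^0(0)=g_2^0(0)=F(0)=0$ and $g_1^\infty(\infty)=f_1^\infty(\infty)=G(\infty)=\infty$, while the chain rule at infinity gives $(\tilde g^\infty)'(\infty)=G'(\infty)\,(f_1^\infty)'(\infty)^{-1}\,(g_1^\infty)'(\infty)=1$, using exactly the normalization $G'(\infty)=(f_1^\infty)'(\infty)$ imposed on the welding pair and $(g_1^\infty)'(\infty)=1$. This is the normalization defining $R$, so by uniqueness of $H$ the image $R(\,\cdot\,)$ of the sewn annulus is $(\tilde f^0,\tilde g^\infty)$, as claimed; non-overlapping is automatic since the cap images lie in the disjoint domains $F(\mathbb{D})$ and $G(\mathbb{D}^*)$.

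I expect the main obstacle to be the holomorphicity of $U$ across the seam: one must check that $\Gamma$ is genuinely removable for $U$ and, relatedly, that the complex structure produced on the sewn surface by Theorem \ref{th:sewing} is compatible with the piecewise definition of $U$, so that $U$ is holomorphic with respect to it rather than merely piecewise holomorphic and continuous. The remaining difficulty is purely organizational, namely keeping the four conformal maps, their complementary maps, the two welding quasisymmetries $\phi_1^0,\phi_2^\infty$, and the orientations of the two glued boundaries consistent throughout.
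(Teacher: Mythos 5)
Your proposal is correct and follows essentially the same route as the paper: the paper also uniformizes the sewn sphere by the piecewise map $G\circ(f_1^\infty)^{-1}$ and $F\circ(g_2^0)^{-1}$, obtaining agreement along the seam from the welding relation $\phi_1^0\circ\phi_2^\infty=G^{-1}\circ F$ (it just organizes this by first passing to the intermediate surface $\mathbb{D}\,\#_{\phi_1^0\circ\phi_2^\infty}\,\mathbb{D}^*$ rather than computing the seam identification directly as you do). Your explicit check that $G'(\infty)=(f_1^\infty)'(\infty)$ forces $(\tilde g^\infty)'(\infty)=1$ is a worthwhile detail the paper leaves implicit.
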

 \begin{proof}
 Composition preserves quasisymmetries \cite{Lehto}, so
 Theorem \ref{th:conformalwelding} guarantees the existence of the
 welding pair $(F,G)$.
 According to Definition \ref{de:border_multiplication} we sew
 together $(f^0_1, g^\infty_1)$ and $(f^0_2,g^\infty_2)$
 as follows.  Remove
 $f^0_1(\mathbb{D})$ from the first sphere, and
 $g^\infty_2(\mathbb{D^*})$ from the second sphere.  Join the two remaining domains, identifying points on the boundaries via the map
 ${g^\infty_2} \circ {f_1^0}^{-1}$.  Denote the new sphere by
 \[ \mathbb{S}=\overline{\mathbb{C}}\backslash f^0_1(\mathbb{D}) \;
 \#_{{g^\infty_2} \circ {f_1^0}^{-1}} \; \overline{\mathbb{C}}\backslash
 g^\infty_2(\mathbb{D}^*).  \]
where the complex structure on $\mathbb{S}$ is given by Theorem
\ref{th:sewing}.   Figure (\ref{sewing}) below may help visualize
the rest of the proof.

 \begin{figure}[ht]
\begin{center}
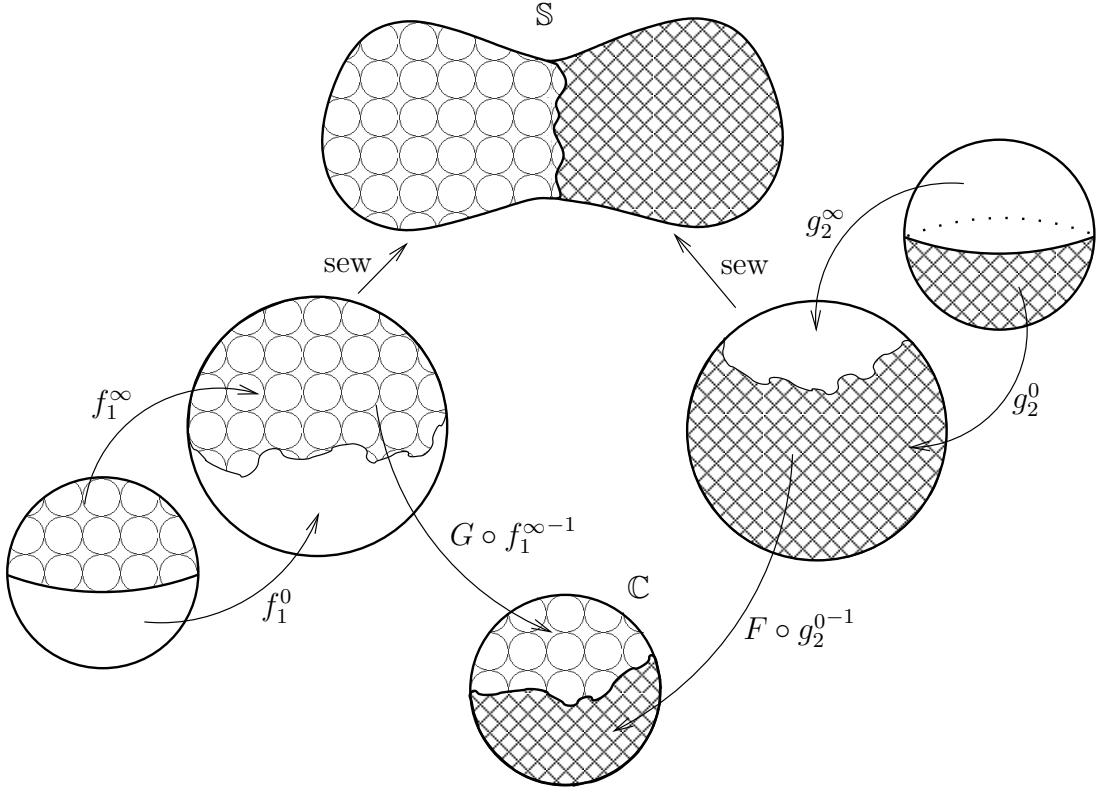
\caption{Sewing spheres using quasisymmetric boundary
identification.} \label{sewing}
\end{center}
\end{figure}

 The problem now is that this new sphere
 is an abstract object, and we need to convert it to a standard
 sphere, and keep track of what happens to the remaining data
 $g^\infty_1$ and $f_2^0$.  To do this, we first give an alternate
 representation of $\mathbb{S}$ as a join of $\mathbb{D^*}$
 to $\mathbb{D}$ (considered as subsets of the spheres corresponding to
 $(f_1^0,g^\infty_1)$ and
 $(f^0_2,g^\infty_2)$ respectively.

 These are sewn via $\phi_1^0
 \circ \phi_2^\infty= {f_1^\infty}^{-1} \circ f_1^0 \circ
 {g_2^\infty}^{-1} \circ g_2^0$.
 That is,
 \[ \mathbb{S}' \equiv \mathbb{D} \; \#_{\phi_1^0 \circ
 \phi_2^\infty} \; \mathbb{D}^*  \]
 which is conformally equivalent to $\mathbb{S}$ via the map $\mathbb{S} \to \mathbb{S}'$ given by
 $$
 z  \mapsto
 \begin{cases} f_1^{\infty}(z), & \text{if } z \in \mathbb{D}^* \\
 g_2^0(z), &  \text{if } z \in \mathbb{D}.
 \end{cases}
$$

 The final step is to represent $\mathbb{S}'$ as two complementary
 quasidisks on the standard sphere joined by the identity map.

 By Theorem \ref{th:conformalwelding} there is a unique pair
 of complementary mappings $F$ and $G$
 associated with the
 quasisymmetry $\phi_1^0 \circ \phi_2^\infty$ such that $\phi_1^0 \circ \phi_2^\infty =
 {G}^{-1} \circ F$ and $G'(\infty)={f_1^\infty}'(\infty)$.
 The map which is equal
 to $F$ on $\mathbb{D}$ and $G$ on $\mathbb{D}^*$ extends
 continuously to a biholomorphic map from $\mathbb{S}'$ into
 $\overline{\mathbb{C}}$.

 Thus we have that the map which is equal to $F \circ
 {g_2^0}^{-1}$ on $\overline{\mathbb{C}} \backslash g^\infty_2(\mathbb{D}^*)$
 and $G \circ {f^\infty_1}^{-1}$ on $\overline{\mathbb{C}}
 \backslash f_1^0(\mathbb{D})$ extends continuously to a
 biholomorphism of $\mathbb{S}$ onto $\overline{\mathbb{C}}$.
 The new data is obtained by composing this biholomorphism with the riggings
 $g_1^\infty$ and $f_2^0$: at $\infty$,
 we have $G \circ {f^\infty_1}^{-1} \circ g_1^\infty$, and
 at $0$ we have $F \circ {g_2^0}^{-1} \circ f_2^0$.
 \end{proof}
 \begin{remark}  The multiplication above extends to
 $\mathcal{A}$ without difficulty.  However, this extended multiplication
 cannot be considered a consequence of Definition \ref{de:border_multiplication}.  The
 interpretation of the multiplication in the non-overlapping mapping
 picture does not change.
 \end{remark}

\end{subsection}
\begin{subsection}{Two natural sub-semigroups of $\mathcal{A}$}
 $\mathcal{A}$ possesses two natural sub-semigroups
 \cite{Neretin_holomorphic,Segal}.
 in the case that the boundary parametrizations are diffeomorphisms.
 Allowing the case of degenerate annuli simplifies matters.
 We include an exposition of the ideas here, both to verify that the picture
 holds in the case of quasisymmetric and for the convenience of the reader.

 \begin{definition}
  Let $\mathcal{E}$ denote the subset of $\mathcal{A}$
  consisting of elements of the form $(f_1^0,\mbox{Id})$.  Let $\mathcal{E}^o = \mathcal{E} \cap \mathcal{A}^o$.
 \end{definition}
 Note that in particular, this implies that $f_1^0 : \cdisk \subset \disk$ is a bounded
 univalent function with quasiconformal extension.

 \begin{proposition} \label{pr:E_multiplication}
   $\mathcal{E}$ is a submonoid of $\mathcal{A}$ and $\mathcal{E}^0$ is a subsemigroup of $\mathcal{A}^0$.  In both cases the multiplication is given by
 \[  (f_1^0,\text{Id}) \cdot (f_2^0,\text{Id}) = (f_1^0 \circ f_2^0, \text{Id}).  \]
 \end{proposition}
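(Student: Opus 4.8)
The plan is to obtain the formula by specializing the general multiplication law \eqref{eq:annulimultiplication} of Theorem \ref{th:annulimultiplication} (which, as noted just after its proof, also governs the multiplication on all of $\mathcal{A}$) to pairs whose second coordinate is the identity, and then to show that the conformal welding step in that formula collapses to an already-known pair. The payoff is that both output coordinates simplify: one to the composition $f_1^0\circ f_2^0$ and the other back to $\mathrm{Id}$.

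First I would record the auxiliary maps attached to an element $(f^0,\mathrm{Id})\in\mathcal{E}$. Since $g^\infty=\mathrm{Id}$ has image $\mathbb{D}^*$, the complementary map $g^0:\mathbb{D}\to\overline{\mathbb{C}}\setminus\overline{\mathbb{D}^*}=\mathbb{D}$ is a conformal self-map of $\mathbb{D}$ normalized by $g^0(0)=0$, $(g^0)'(0)>0$, hence $g^0=\mathrm{Id}$ by the Schwarz lemma, and therefore the quasisymmetry $\phi^\infty={g^\infty}^{-1}\circ g^0=\mathrm{Id}$. Moreover, because $f^0$ carries a quasiconformal extension to $\mathbb{C}$, its image $f^0(\mathbb{D})$ is a quasidisk, so the complementary map $f^\infty$ exists and is itself quasiconformally extendible; thus every object entering \eqref{eq:annulimultiplication} is defined, even in the degenerate (\,$\mathcal{A}$, not $\mathcal{A}^0$\,) case.

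Next I would evaluate the welding data. With $\phi_2^\infty=\mathrm{Id}$ one has $\phi_1^0\circ\phi_2^\infty=\phi_1^0={f_1^\infty}^{-1}\circ f_1^0$. The key observation is that the pair $(f_1^0,f_1^\infty)$ is \emph{itself} a conformal welding pair for this quasisymmetry in the sense of Theorem \ref{th:conformalwelding}: the two boundary curves agree, $f_1^0(\partial\mathbb{D})=f_1^\infty(\partial\mathbb{D}^*)=\partial f_1^0(\mathbb{D})$; the normalizations $f_1^0(0)=0$, $f_1^\infty(\infty)=\infty$ hold; one has ${f_1^\infty}^{-1}\circ f_1^0|_{S^1}=\phi_1^0$; and crucially the derivative condition $(f_1^\infty)'(\infty)$ matches the normalization $G'(\infty)=(f_1^\infty)'(\infty)$ demanded in Theorem \ref{th:annulimultiplication}. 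By the uniqueness clause of Theorem \ref{th:conformalwelding} the welding pair must be exactly $(F,G)=(f_1^0,f_1^\infty)$. Substituting this together with $g_2^0=\mathrm{Id}$ and $g_1^\infty=\mathrm{Id}$ into \eqref{eq:annulimultiplication}, the first coordinate becomes $F\circ{g_2^0}^{-1}\circ f_2^0=f_1^0\circ f_2^0$ and the second becomes $G\circ{f_1^\infty}^{-1}\circ g_1^\infty=f_1^\infty\circ{f_1^\infty}^{-1}=\mathrm{Id}$, which is the asserted formula.

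Finally I would check closure and the algebraic structure. The map $f_1^0\circ f_2^0$ is one-to-one and holomorphic, fixes $0$, and has a quasiconformal extension (the composition of the two given extensions is quasiconformal); since $f_2^0(\mathbb{D})\subset\mathbb{D}$ we get $f_1^0(f_2^0(\mathbb{D}))\subset f_1^0(\mathbb{D})\subset\mathbb{D}$, so the product lies in $\mathcal{E}$, and the same estimate with closures (using that $f_2^0(\overline{\mathbb{D}})$ is a compact subset of $\mathbb{D}$ when $f_2^0\in\mathcal{E}^0$) shows $\mathcal{E}^0$ is closed under multiplication. As $(\mathrm{Id},\mathrm{Id})\in\mathcal{E}$ is the identity of $\mathcal{A}$, $\mathcal{E}$ is a submonoid, whereas $(\mathrm{Id},\mathrm{Id})\notin\mathcal{E}^0$, so $\mathcal{E}^0$ is only a subsemigroup. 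The one genuinely delicate point, and the step I expect to carry the whole argument, is the identification $(F,G)=(f_1^0,f_1^\infty)$: it works only because the normalization $G'(\infty)=(f_1^\infty)'(\infty)$ is exactly the one prescribed in Theorem \ref{th:annulimultiplication}; any rescaling there would produce a nontrivial conformal factor and the second coordinate would fail to return to the identity.
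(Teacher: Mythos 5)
Your proposal is correct and follows essentially the same route as the paper: specialize the multiplication formula of Theorem \ref{th:annulimultiplication}, observe that $\phi_1^0\circ\phi_2^\infty={f_1^\infty}^{-1}\circ f_1^0$, conclude $(F,G)=(f_1^0,f_1^\infty)$, and substitute. You merely make explicit two points the paper leaves implicit --- that $g^0=\mathrm{Id}$ by the Schwarz lemma and that the identification of $(F,G)$ rests on the uniqueness clause of Theorem \ref{th:conformalwelding} together with the normalization $G'(\infty)=(f_1^\infty)'(\infty)$ --- which is a welcome clarification rather than a different argument.
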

 \begin{proof}
 We need to establish that $\mathcal{E}$ is indeed closed under
 multiplication.  If $(f_1^0,\mbox{Id}),(f_2^0,\mbox{Id}) \in
 \mathcal{E}$ then the maps $G$ and $F$ of Theorem
 \ref{th:annulimultiplication} satisfy ${G}^{-1} \circ
 F={f_1^\infty}^{-1} \circ f_1^0$ since
 $g_i^\infty=g_i^0=\mbox{id}$ for $i=1,2$.  Thus $G =
 f_1^\infty$ and $F=f_1^0$, so by the definition of multiplication
 \ref{eq:annulimultiplication} it follows that
 \begin{equation} \label{eq:Emultiplication}
  (f_1^0,\mbox{Id}) \cdot (f_2^0,\mbox{Id}) = (f_1^0 \circ
   f_2^0,\mbox{Id}).
 \end{equation}
 It remains to show that $\mathcal{E}^o$ is closed under multiplication.
 This follows from the observations that if $f_1^0(\overline{\mathbb{D}})$
 does not intersect $S^1$, then neither does
 $f_1^0 \circ f_2^0(\overline{\mathbb{D}})$, and that a composition of
 quasiconformal maps is also a quasiconformal map.
 \end{proof}

 The set of quasisymmetries of the circle $QS(S^1)$ is a group under
 composition.  One can regard $\mbox{QS}(S^1)$ as consisting of ``degenerate
 annuli'' corresponding to welding pairs.  That is, if $\phi:S^1
 \rightarrow S^1$ is a quasisymmetry we can view the corresponding
 welding pair $(f^0,g^\infty)$ such that ${g^\infty}^{-1} \circ f^0 = \phi$
 as an element of $\mathcal{A}$.
 \begin{definition} Let $\mathcal{G}\subset \mathcal{A}$ denote the set of
 pairs $(f,g) \in \mathcal{A}$ such that $f(\partial \mathbb{D}) = g(\partial \mathbb{D}^*)$ as sets.
 \end{definition}
 $\mathcal{G}$ is a group and multiplication corresponds to composition of
 quasisymmetries, as the following proposition shows.
 \begin{proposition} \label{pr:G_multiplication}
  $\mathcal{G}$ is a group, and is isomorphic to $\mbox{QS}(S^1)$ via the map
  \begin{align*}
   \rho:\mathcal{G} & \rightarrow   \mbox{QS}(S^1) \\
    (f,g)& \mapsto  \left.g^{-1} \circ f \right|_{S^1}.
  \end{align*}
 \end{proposition}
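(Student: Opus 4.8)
The plan is to deduce both assertions from the two structural tools already in hand: conformal welding (Theorem \ref{th:conformalwelding}) for the bijectivity of $\rho$, and the multiplication formula (Theorem \ref{th:annulimultiplication}), together with an identification of the complementary maps, for the homomorphism property. First I would settle that $\rho$ is a well-defined bijection onto $\mbox{QS}(S^1)$. For $(f,g)\in\mathcal{G}$ the restriction $g^{-1}\circ f|_{S^1}$ is quasisymmetric, since $f$ and $g$ induce quasisymmetries on $S^1$ and quasisymmetry is preserved under composition and inversion \cite{Lehto}. Moreover an element of $\mathcal{G}$ is precisely a welding pair as in Theorem \ref{th:conformalwelding} with $a=1$: the conditions $f(0)=0$, $g(\infty)=\infty$, $g'(\infty)=1$, quasiconformal extendibility, and $f(\partial\mathbb{D})=g(\partial\mathbb{D}^*)$ are exactly the hypotheses there, and $\rho(f,g)$ is the associated welding quasisymmetry. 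The uniqueness clause of Theorem \ref{th:conformalwelding} then yields both injectivity (two elements with the same image are the same welding pair) and surjectivity (every $\phi$ produces a welding pair that lies in $\mathcal{G}$).

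Next I would pin down the complementary maps of a $\mathcal{G}$-element. Because $f(\mathbb{D})$ and $g(\mathbb{D}^*)$ are disjoint with common boundary the quasicircle $\gamma=f(\partial\mathbb{D})$, they are the two Jordan domains complementary to $\gamma$; hence $f^\infty$ maps $\mathbb{D}^*$ onto $g(\mathbb{D}^*)$ and $g^0$ maps $\mathbb{D}$ onto $f(\mathbb{D})$. Since $f^\infty$ and $g$ both fix $\infty$ they differ by a rotation of the source, and comparing derivatives at $\infty$ --- $(f^\infty)'(\infty)>0$ against $g'(\infty)=1$ --- forces $f^\infty=g$ with no residual rotation. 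The same argument at $0$ gives $g^0=f\circ R$ for a rotation $R(z)=e^{i\psi}z$ pinned down by $(g^0)'(0)>0$.

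Finally I would compute the product via Theorem \ref{th:annulimultiplication}, using the extension of the formula to all of $\mathcal{A}$ noted in the remark following that theorem. Substituting $f_1^\infty=g_1$, $g_1^\infty=g_1$, $f_2^0=f_2$ and $g_2^0=f_2\circ R$ collapses the two components: the $\infty$-component $G\circ(f_1^\infty)^{-1}\circ g_1^\infty$ becomes $G$, and the $0$-component $F\circ(g_2^0)^{-1}\circ f_2^0$ becomes $F\circ R^{-1}$, so the product is $(F\circ R^{-1},G)$, which again satisfies the $\mathcal{G}$-condition and hence establishes closure. The welding pair $(F,G)$ is the one associated with $\phi_1^0\circ\phi_2^\infty$, whose restriction to $S^1$ equals $\rho(f_1,g_1)\circ\rho(f_2,g_2)\circ R$; applying $\rho$ to the product then gives $G^{-1}\circ F\circ R^{-1}|_{S^1}=\rho(f_1,g_1)\circ\rho(f_2,g_2)$, the two occurrences of $R$ cancelling. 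Thus $\rho$ is a homomorphism, and since $\mathcal{A}$ is a monoid with identity $(\mathrm{Id},\mathrm{Id})\in\mathcal{G}$, the bijective homomorphism $\rho$ transports the group structure of $\mbox{QS}(S^1)$ back to $\mathcal{G}$, proving that $\mathcal{G}$ is a group isomorphic to $\mbox{QS}(S^1)$.

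The main obstacle I anticipate is precisely the bookkeeping of these rotation factors. Because $f$ carries no derivative normalization at $0$, the complementary map $g^0$ agrees with $f$ only up to the rotation $R$, and this $R$ infiltrates both the $0$-component of the product and the welding quasisymmetry; the crux of the argument is verifying that the two appearances cancel exactly. Getting the $\infty$-side normalizations right --- so that $f^\infty=g$ with no leftover rotation --- is what keeps the cancellation clean and makes $\rho$ multiplicative on the nose rather than merely up to a rotation.
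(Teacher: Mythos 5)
Your proof is correct and follows essentially the same route as the paper's: identify the complementary maps of a welding pair, feed them into the multiplication formula of Theorem \ref{th:annulimultiplication}, and invoke Theorem \ref{th:conformalwelding} with $a=1$ for bijectivity. In fact your bookkeeping of the rotation $R$ at the origin is more careful than the paper's, which simply asserts $g_i^0=f_i^0$ and $g_i^\infty=f_i^\infty$ without noting that $f'(0)$ need not be positive; your computation showing the two occurrences of $R$ cancel is exactly what justifies that the published argument's conclusion still holds.
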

 \begin{proof}
 Let
 $(f_1^0,g_1^\infty)$ and $(f_2^0,g_2^\infty)$ be two such welding
 pairs corresponding to $\phi_1$ and $\phi_2$ respectively.
 Thus we have that $g_i^0=f_i^0$ and
 $g_i^\infty=f_i^\infty$ for $i=1,2$, and furthermore that
 $\phi_1^0=\phi$ and $\phi_2^\infty=\phi_2$ in Definition
 \ref{th:annulimultiplication}.  Thus by equation
 \ref{eq:annulimultiplication} it follows that
 \[ (f_1^0,f_1^\infty) \cdot(f_2^0,f_2^\infty) = (F,G)  \]
 where ${G}^{-1} \circ F = \phi_1 \circ \phi_2$, which
 shows that $\rho$ is a homomorphism.  If $\rho(f,g)=g^{-1} \circ
 f = \mbox{Id}$, then $f=g=\mbox{Id}$, so $\rho$ is injective.
 The fact that $\rho$ is surjective follows directly from Theorem
 \ref{th:conformalwelding} with $a=1$.
 \end{proof}
 \begin{remark}  In other words, if $\phi_1={g_1^{\infty}}^{-1}
 \circ f_1^0$ and $\phi_2={g_2^{\infty}}^{-1} \circ f_2^0$ then
 \[ \rho\left((f_1^0,g_1^\infty) \cdot (f_2^0,g_2^\infty)\right)=\phi_1 \circ
 \phi_2.  \]
 \end{remark}
\end{subsection}
\begin{subsection}{A complex structure on the semigroup of rigged
annuli} \label{se:complex_structure_definition} Let $\cpunc$ denote
the twice-punctured sphere $\mathbb{C} \setminus \{0 \}$. We will
now define a natural complex structure on $\mathcal{A}^0$.  This is
inherited from a set of non-overlapping maps, which we now define.

\begin{definition}
Let $\Oqc(\cpunc) = \{(f,g)\}$ where $(f,g)$ is a pair of
non-overlapping mappings satisfying conditions $(1)$, $(2)$ and
$(3)$ in Definition \ref{de:Ao} and $g(\infty) = \infty$.
\end{definition}

The space $\mathcal{A}^o$ can be identified with a quotient of
$\Oqc(\cpunc)$ as follows. Define a $\cpunc$ action on
$\Oqc(\cpunc)$ by
$$ a \cdot (f,g) \mapsto (af,ag)  $$
for $a \in \mathbb{C}^*$.  This is the action of the automorphism
group $\mbox{Aut}(\cpunc)=\{ z \mapsto az : a \in \cpunc \}$ by
composition on the left. The quotient space will be denoted
$$
\Oqc(\cpunc)/\mbox{Aut}(\mathbb{C}^*)
$$
with elements denoted $[f,g]$. Each element of
 $\Oqc(\cpunc)/\mbox{Aut}(\mathbb{C}^*)$ has a unique representative in
 $\mathcal{A}^o$.
Thus
\begin{align}
\label{Ao_Oqc_iso}
I : \mathcal{A}^o & \to \Oqc(\cpunc)/ \Aut(\cpunc) \\
(f,g) &\mapsto [f,g] \nonumber
\end{align}
is a bijection.

\begin{remark}
 Although $\Aut(\cpunc)$ is
 the same as $\cpunc$ as a set, we will keep distinct notation in
 the quotient as
 they have different roles.
 \end{remark}

 We can endow $\Oqc(\cpunc)/\mbox{Aut}(\mathbb{C}^*)$ with a complex structure
 in two distinct ways.  We describe one of these ways now.
 First, we need to define two function
 spaces.
 \begin{definition} \label{de:Oqcdefinition}
  Let $\Oqc$ denote the set of $f:\mathbb{D} \rightarrow
 \mathbb{C}$ satisfying $f(0)=0$ which are holomorphic, one-to-one
 and possess a quasiconformal extension to $\mathbb{C}$.
 \end{definition}
 \begin{definition}  Let
 \[  A^1_\infty(\mathbb{D}) = \{ v(z): \mathbb{D} \rightarrow
 \mathbb{C} \;|\; v \text{ holomorphic}, \ ||v||_{1,\infty}=
 \sup_{z \in \mathbb{D}}
 (1- |z|^2) | v(z)| < \infty \}.  \]
 \end{definition}
 Note that $A^1_\infty(\mathbb{D})$ is a Banach space. Let
 $A^1_\infty(\mathbb{D})\oplus \mathbb{C}$ denote the Banach space
 with the direct sum norm
 $||(\phi,c)||=||\phi||_{1,\infty} + |c|.$

 The function space $\Oqc$ has a complex structure derived from that
 of $A^1_\infty(\mathbb{D})
 \oplus \mathbb{C}$.  Define
 \begin{equation}  \label{eq:scriptA_definition}
  \Psi(f)=\frac{f''}{f'}.
 \end{equation}
 The image of $\Oqc$ under the map
 \begin{equation} \label{eq:chi_definition}
  \chi(f)= \left( \Psi(f),f'(0) \right)
 \end{equation}
 is an open subset of $A^1_\infty(\disk) \oplus \mathbb{C}$ and thus
 $\chi$ induces a complex structure on $\Oqc$ \cite[Theorem
 3.1]{RadSchip_nonoverlapping}.

 $\Oqc(\cpunc)/\mbox{Aut}(\mathbb{C}^*)$ also inherits a complex structure
 from $\Oqc(\cpunc)$.  It was shown in
 \cite{RadSchip_nonoverlapping} that the set of non-overlapping
 quasiconformally extendible conformal maps into a Riemann surface with $n$
 distinguished points possesses
 a complex structure, locally modelled on an $n$-fold product of $\Oqc$.
 In particular, $\Oqc(\cpunc)$ can be given a complex structure in
 this way.
 In fact, $\Oqc(\cpunc)$ is mapped bijectively onto an open subset
 of $\Oqc \times \Oqc$ via the map
 \begin{eqnarray} \label{eq:thetadefinition}
  \theta: \Oqc(\cpunc) & \rightarrow & \Oqc \times \Oqc \\
   (f,g)& \mapsto & (f,S(g)) \nonumber
 \end{eqnarray}
 where
 \begin{equation} \label{eq:S_definition}
  S(g)(z)=1/g(1/z).
 \end{equation}
 \begin{theorem} \label{th:Oqccstar_in_Oqcsquared}
  $\theta$ is an injective map onto an open subset of $\Oqc \times
  \Oqc$.  Thus $\Oqc(\cpunc)$ inherits a complex structure from
  $\Oqc \times \Oqc$.
 \end{theorem}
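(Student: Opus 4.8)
The plan is to realize $\theta$ as an explicit global coordinate and to reduce the theorem to the statement that the non-overlapping condition is open. First I would verify that $\theta$ lands in $\Oqc \times \Oqc$, i.e. that $S(g) \in \Oqc$ whenever $(f,g) \in \Oqc(\cpunc)$. Since $f(0)=0$ and $(f,g)$ satisfies condition (2) of Definition \ref{de:Ao}, the point $0 = f(0)$ lies outside the compact set $g(\overline{\mathbb{D}^*})$; hence $g$ is nonvanishing on $\overline{\mathbb{D}^*}$ and $S(g)(z) = 1/g(1/z)$ is a finite one-to-one holomorphic map on $\mathbb{D}$ with $S(g)(0) = 1/g(\infty) = 0$. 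A quasiconformal extension of $S(g)$ is obtained by conjugating that of $g$ by $z \mapsto 1/z$, which preserves quasiconformality, so $S(g) \in \Oqc$.

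A direct computation shows $S$ is an involution, $S \circ S = \mathrm{id}$, so from $\theta(f,g) = (f, S(g))$ the first coordinate returns $f$ and $S$ applied to the second returns $g$; hence $\theta$ is injective. To describe the image I would run this backwards: given any $(f,h) \in \Oqc \times \Oqc$, set $g = S(h)$, a one-to-one holomorphic map on $\mathbb{D}^*$ with a quasiconformal extension and $g(\infty) = 1/h(0) = \infty$, finite on $\mathbb{D}^*$ because $h$ is $\mathbb{C}$-valued. Thus conditions (1), (3) and $g(\infty)=\infty$ hold automatically, and $(f,g) \in \Oqc(\cpunc)$ if and only if the one remaining condition (2) holds. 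Consequently the image of $\theta$ is exactly
\[ U = \{ (f,h) \in \Oqc \times \Oqc : f(\overline{\mathbb{D}}) \cap S(h)(\overline{\mathbb{D}^*}) = \emptyset \}. \]

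The main step is to show $U$ is open. For $(f_0,h_0) \in U$ the sets $f_0(\overline{\mathbb{D}})$ and $S(h_0)(\overline{\mathbb{D}^*})$ are disjoint compact subsets of $\overline{\mathbb{C}}$, hence separated by a positive distance, and disjointness of compacta persists under small perturbations in the Hausdorff metric. I would therefore prove that the maps $f \mapsto f(\overline{\mathbb{D}})$ and $h \mapsto S(h)(\overline{\mathbb{D}^*})$ are continuous into the Hausdorff metric with respect to the topology that $\chi$ induces on $\Oqc$. The hard part is exactly this continuity up to the boundary: convergence $\chi(f_n) \to \chi(f)$ controls $f_n$ only locally uniformly on the open disk, so it must be upgraded to uniform control of the boundary values, equivalently of the quasiconformal extensions. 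This is the continuous dependence of the extension on the pre-Schwarzian datum $(\Psi(f), f'(0))$ supplied by the non-overlapping-map framework of \cite{RadSchip_nonoverlapping}, which I would invoke. Granting it, all pairs near $(f_0,h_0)$ have closed images within Hausdorff distance a third of the separation, remain disjoint, and lie in $U$.

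Finally, $\theta$ is a bijection of $\Oqc(\cpunc)$ onto the open subset $U$ of the complex Banach manifold $\Oqc \times \Oqc$. Declaring $\theta$ biholomorphic---equivalently, taking $(\chi \times \chi) \circ \theta$ as a single global chart---transports the complex structure of $U$ back to $\Oqc(\cpunc)$, which is the assertion of the theorem.
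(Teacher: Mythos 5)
Your proposal is correct and takes essentially the same approach as the paper: injectivity is immediate, and the real content is that the non-overlapping condition cuts out an open set, which both you and the authors obtain from the continuity/containment results of \cite{RadSchip_nonoverlapping} (the paper invokes its Corollary 3.5 to keep $\overline{f(\mathbb{D})}$ and $\overline{g(\mathbb{D}^*)}$ inside disjoint open sets, where you phrase the same fact as Hausdorff-metric continuity of $f \mapsto f(\overline{\mathbb{D}})$). Your added verifications that $S(g) \in \Oqc$ and that the image is exactly the set of pairs satisfying the disjointness condition are correct and make explicit what the paper leaves implicit.
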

 \begin{proof}
  It is obvious that $\theta$ is injective.  We show that $\theta$
  is open.

   Define $\iota(z)=1/z$.  Fix $(f_0,g_0) \in \Oqc(\cpunc)$.
 Let $B^0$ and $B^\infty$ be simply connected open sets containing
 $\overline{f_0(\disk)}$ and $\overline{g_0(\disk^*)}$ such that
 $B^0 \cap B^\infty$ is empty. Choose $\zeta^0:B^0 \rightarrow \mathbb{C}$ and
 $\zeta^\infty: B^\infty \rightarrow \cbar \setminus \{0\}$ to be
 one-to-one holomorphic maps taking $0$ to $0$ and $\infty$ to $0$ respectively.

 By \cite[Corollary 3.5]{RadSchip_nonoverlapping} there is
 an open neighbourhood $U_0$ of $\zeta^0 \circ f_0 \in \Oqc$ such that for
 all $\psi_0 \in U_0$, $\overline{\psi_0(\disk)} \subset \zeta^0(B^0)$. Similarly,
 there is a neighbourhood $U_{\infty}$ of $\zeta^\infty \circ g_0 \circ \iota$
 such that for
 all $\psi_\infty \in U_\infty$, $\overline{\psi_\infty(\mathbb{D})} \subset
 \zeta^\infty(B^\infty)$.  Thus $\theta^{-1}(U_0 \times U_\infty)
 \subset \Oqc(\cpunc)$.  This proves that $\theta$ is open.
 \end{proof}

 Thus we have a global map of $\Oqc(\cpunc)$ into an open susbset
 of a  Banach space given by
 \begin{eqnarray} \label{eq:nice_chart}
   B: \Oqc(\cpunc) & \rightarrow & A^1_\infty(\disk) \oplus \mathbb{C}^* \oplus A^1_\infty(\disk) \oplus \mathbb{C}^*
   \\ \nonumber
  (f,g) & \mapsto & (\chi(f), \chi(S(g))) = \left(
  \mathcal{A}(f),f'(0), \mathcal{A}(S(g)),g'(\infty) \right).
 \end{eqnarray}

 We will show how this complex structure passes down to the quotient
 $\Oqc(\cpunc)/\mbox{Aut}(\mathbb{C}^*)$.  Denote
 for $a \in \mathbb{C}^*$,
 \[  \Oqc(\cpunc)_a =\{ (f,g) \in \Oqc(\cpunc) : g'(\infty)=a \}.
 \]
 For any $a$, $\Oqc(\cpunc)/\mbox{Aut}(\cpunc)$ can be identified with $\Oqc(\cpunc)_a$.
 Of course $\Oqc(\cpunc)_1=\mathcal{A}^o$, and
 $\Oqc(\cpunc)_a$ are just cosets of $\mathcal{A}^0$ under the
 group action of $\mbox{Aut}(\cpunc) \cong \mathbb{C}^*$ on $\Oqc(\cpunc)$.
 Observe that
 \begin{proposition} \label{pr:autc_action}
  The action $a \cdot (f,g)=(af,ag)$ of $\mbox{Aut}(\mathbb{C}^*)$ on
  $\Oqc(\cpunc)$ is holomorphic.
 \end{proposition}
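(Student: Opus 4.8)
The plan is to transport the $\Aut(\cpunc)$-action into the global chart $B$ of (\ref{eq:nice_chart}) and verify that it becomes manifestly holomorphic there. Since $B$ is a homeomorphism of $\Oqc(\cpunc)$ onto an open subset of the Banach space $A^1_\infty(\disk) \oplus \cpunc \oplus A^1_\infty(\disk) \oplus \cpunc$, a self-map of $\Oqc(\cpunc)$ is holomorphic precisely when its conjugate $B \circ (a\,\cdot) \circ B^{-1}$ is holomorphic in these coordinates. So the whole argument reduces to recording how the four coordinates $\left( \Psi(f),\, f'(0),\, \Psi(S(g)),\, g'(\infty) \right)$ transform under $(f,g) \mapsto (af,ag)$. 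Before doing this I would note that the action does preserve $\Oqc(\cpunc)$: multiplication by $a \in \cpunc$ is a conformal automorphism of $\cbar$ fixing $0$ and $\infty$, so $(af,ag)$ again satisfies the non-overlapping, normalization, and quasiconformal-extendibility conditions.

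The key observation is that the infinite-dimensional coordinate $\Psi(f) = f''/f'$ of (\ref{eq:scriptA_definition}) is invariant under post-composition with a scaling, since $\Psi(af) = (af)''/(af)' = f''/f' = \Psi(f)$. For the second component, a direct computation from (\ref{eq:S_definition}) gives $S(ag)(z) = 1/\big(a\,g(1/z)\big) = a^{-1} S(g)(z)$, and hence $\Psi(S(ag)) = \Psi\big(a^{-1}S(g)\big) = \Psi(S(g))$ by the same scaling invariance. The two finite-dimensional coordinates merely rescale: $(af)'(0) = a\,f'(0)$, and, reading off the leading coefficient at $\infty$, $(ag)'(\infty) = a\,g'(\infty)$.

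Writing $(\alpha, b, \beta, d) = B(f,g)$, these four computations show that in the chart the action is $(a, \alpha, b, \beta, d) \mapsto (\alpha,\, ab,\, \beta,\, ad)$. This expression is holomorphic jointly in all its arguments, the only nonconstant dependence being the multiplication maps $\cpunc \times \cpunc \to \cpunc$, $(a,b) \mapsto ab$ and $(a,d) \mapsto ad$; the two Banach-space coordinates are simply carried through unchanged. Therefore the action is holomorphic. I do not anticipate any genuine obstacle here: the entire content is the scaling invariance of $\Psi$, which forces the two $A^1_\infty(\disk)$-coordinates to be constant along $\Aut(\cpunc)$-orbits and leaves only the trivially holomorphic scalar multiplications, so the work is purely bookkeeping in the chart rather than analysis.
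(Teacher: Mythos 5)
Your proof is correct, and it takes a genuinely different route from the paper's. The paper reduces the problem via Hartogs' theorem on separate holomorphicity in infinite dimensions and then verifies that $f \mapsto af$ and $S(g) \mapsto S(g)/a$ are holomorphic self-maps of $\Oqc$ by citing an external lemma on post-composition with holomorphic maps (\cite[Lemma 3.10]{RadSchip_nonoverlapping}). You instead conjugate the action by the global chart $B$ of (\ref{eq:nice_chart}) and exploit the scaling invariance $\Psi(af)=\Psi(f)$, which makes the two $A^1_\infty(\disk)$-coordinates constant along orbits and leaves only scalar multiplications on the $\cpunc$-factors; this yields joint holomorphicity in $(a,f,g)$ directly, with no appeal to Hartogs or to the cited lemma, and is arguably the more self-contained and transparent argument. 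One bookkeeping slip: your own computation $S(ag)=a^{-1}S(g)$ shows that the fourth chart coordinate --- which per (\ref{eq:nice_chart}) is the second component of $\chi(S(g))$, i.e.\ $S(g)'(0)$ --- transforms as $d \mapsto d/a$ rather than $d \mapsto ad$; your later line ``$(ag)'(\infty)=a\,g'(\infty)$'' uses the leading-coefficient convention and is inconsistent with your $S$-computation (the paper's proof itself records the transformation as $S(g)\mapsto S(g)/a$). Since $(a,d)\mapsto d/a$ is still holomorphic on $\cpunc\times\cpunc$, this does not affect the conclusion, but the final displayed form of the action in the chart should read $(\alpha, ab, \beta, d/a)$.
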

 \begin{proof}
  Since $\Oqc(\cpunc)$ is locally modelled on $\Oqc \times \Oqc$,
  by Hartog's theorem on separate holomorphicity (see \cite{Mujica} for
  a version in infinite dimensions) it is enough to check that $f
  \mapsto a f$ and $S(g) \mapsto
  S(g)/a$ are holomorphic maps of $\Oqc$.  Since $T(z)=bz$ is a holomorphic
  map on $\cbar$ this follows immediately from
  \cite[Lemma 3.10]{RadSchip_nonoverlapping}.
 \end{proof}
 Define the map
 \begin{eqnarray} \label{eq:H_definition}
  H: \Oqc(\cpunc) & \rightarrow & \mathbb{C}^* \\ \nonumber
  (f,g) & \mapsto & g'(\infty).
 \end{eqnarray}
 \begin{theorem} \label{th:Oqc_structure_quotient}
  $H$ is holomorphic, and possesses a global holomorphic section $s$.
 \end{theorem}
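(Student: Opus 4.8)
The plan is to extract both assertions from the global chart $B$ of (\ref{eq:nice_chart}), which by Theorem \ref{th:Oqccstar_in_Oqcsquared} is a biholomorphism of $\Oqc(\cpunc)$ onto an open subset of the Banach space $A^1_\infty(\disk) \oplus \mathbb{C} \oplus A^1_\infty(\disk) \oplus \mathbb{C}$. For the holomorphicity of $H$, I would observe that $H$ is simply $B$ followed by a coordinate projection: the last factor of $B(f,g)$ records $g'(\infty)$ (or, reading $\chi(S(g))$ literally via (\ref{eq:S_definition}), $S(g)'(0) = 1/g'(\infty)$, which is a biholomorphic function of $g'(\infty)$ on $\cpunc$ since $z \mapsto 1/z$ is). As projection onto a scalar coordinate of a Banach space is a bounded linear functional, hence holomorphic, and $B$ is a holomorphic chart, $H$ is holomorphic. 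This step is routine.

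For the section I would fix once and for all a genuine (non-degenerate) reference annulus $(f_0,g_0) \in \mathcal{A}^o$, which exists; for instance $f_0(z)=z/2$ and $g_0(z)=z$ satisfy all four conditions of Definition \ref{de:Ao}, with $\overline{f_0(\disk)}=\{|z|\le \tfrac12\}$ and $\overline{g_0(\mathbb{D}^*)}=\{|z|\ge 1\}\cup\{\infty\}$ disjoint and $g_0'(\infty)=1$. I then set $s(a) = a \cdot (f_0,g_0) = (af_0, ag_0)$ using the $\Aut(\cpunc)$-action of Proposition \ref{pr:autc_action}. One checks directly that $s(a)\in\Oqc(\cpunc)$: multiplication by $a\in\cpunc$ is a biholomorphism of $\cbar$ fixing $0$ and $\infty$, so it preserves univalence, the quasiconformal extendibility of $f_0$ and $g_0$, the non-overlapping condition $af_0(\disk)\cap ag_0(\mathbb{D}^*) = a\bigl(f_0(\disk)\cap g_0(\mathbb{D}^*)\bigr) = \emptyset$, and the normalizations $af_0(0)=0$, $ag_0(\infty)=\infty$. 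Crucially, $\Oqc(\cpunc)$ does \emph{not} impose $g'(\infty)=1$, so there is no obstruction to moving off the fiber $\mathcal{A}^o$. Moreover $H(s(a)) = (ag_0)'(\infty) = a\,g_0'(\infty) = a$, so $s$ is a set-theoretic section of $H$.

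It remains to show $s$ is holomorphic, and this is the only point requiring care: Proposition \ref{pr:autc_action} gives holomorphicity of the action in the variable $(f,g)$, whereas here I need holomorphicity in $a$ with $(f_0,g_0)$ fixed. I would settle this by computing $B\circ s$ explicitly. Since the pre-Schwarzian (\ref{eq:scriptA_definition}) satisfies $\Psi(cf)=\Psi(f)$ for every constant $c\in\cpunc$, and $S(ag_0)=a^{-1}S(g_0)$ by (\ref{eq:S_definition}), the two $A^1_\infty(\disk)$-components of $B(s(a))$ are the $a$-independent constants $\Psi(f_0)$ and $\Psi(S(g_0))$, while the two scalar components are $(af_0)'(0)=a\,f_0'(0)$ and $a$ (respectively $1/a$). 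Thus $B(s(a))$ is an explicit, manifestly holomorphic function of $a\in\cpunc$, so $s$ is holomorphic because $B$ is a biholomorphism onto an open set. I expect no genuine obstacle: the scaling-invariance of $\Psi$ is exactly what makes the section holomorphic, and the entire argument collapses to this one computation in the chart $B$.
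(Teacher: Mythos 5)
Your proof is correct and follows essentially the same route as the paper: $H$ is the chart $B$ followed by a scalar coordinate projection, and the section is $a\mapsto a\cdot(f_0,g_0)$ (the paper uses an arbitrary base point divided by $g_0'(\infty)$ rather than a normalized one), with holomorphicity read off from the scaling-invariance of $\Psi$ in the explicit formula for $B\circ s$. Your extra care about whether the last coordinate is $g'(\infty)$ or $1/g'(\infty)$ is a reasonable clarification of a convention the paper glosses over, but it does not change the argument.
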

 \begin{proof}
  The map $S(g) \mapsto S(g)'(0)=g'(\infty)$ is just $\chi$ followed by a
  projection onto the second component of $A^1_\infty(\disk) \oplus
  \mathbb{C}$, which is clearly holomorphic since $\chi$ is.
  Thus $H$ is holomorphic.

  Fix $(f_0,g_0) \in \Oqc(\cpunc)$.  $H$ has a global section
  through $(f_0,g_0)$ given by
  \begin{align*}
  s : \mathbb{C}^* &\to \Oqc \\
    a &\mapsto \frac{a}{g_0'(\infty)} \cdot (f_0,g_0).
  \end{align*}

  It is easy to check that $H \circ s(a)=a$ is the identity for any
  $a \in \mathbb{C}$.  Now
  \[  \mathcal{A} \left( \frac{a}{g_0'(\infty)} f_0 \right) =
    \mathcal{A}(f_0) \]
  and similarly for $S(g_0)$.  Since
  \[ a \mapsto B(s(a))= \left( \mathcal{A}(f_0), a \frac{f_0'(\infty)}
  {g_0'(\infty)},\mathcal{A}(S(g_0)),a \right) \]
  is holomorphic, it follows that
   $s$ is holomorphic.
 \end{proof}
 \begin{corollary} \label{co:normalized_Oqc_submanifold}
  For every $a \in \mathbb{C}^*$, $\Oqc(\cpunc)_a$
  is a complex submanifold of $\Oqc(\cpunc)$.
 \end{corollary}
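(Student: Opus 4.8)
The plan is to exhibit $\Oqc(\cpunc)_a$ as a level set of $H$ and to read off a submanifold chart directly from the global chart $B$ of \eqref{eq:nice_chart}. The first observation is that $B=(\chi\times\chi)\circ\theta$ is a biholomorphism of $\Oqc(\cpunc)$ onto an open subset $\Omega$ of the Banach space $E:=A^1_\infty(\disk)\oplus\mathbb{C}^*\oplus A^1_\infty(\disk)\oplus\mathbb{C}^*$, by Theorem \ref{th:Oqccstar_in_Oqcsquared} together with the chart $\chi$ on $\Oqc$ of \eqref{eq:chi_definition}. By \eqref{eq:nice_chart} and \eqref{eq:H_definition}, the last component of $B(f,g)$ is $g'(\infty)=H(f,g)$; that is, $H=\pi\circ B$, where $\pi:E\to\mathbb{C}^*$ is the projection onto the last factor. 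Hence it suffices to show that $\Omega\cap\pi^{-1}(a)$ is a complex submanifold of $\Omega$.

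For this I would argue as follows. From the definition, $\Oqc(\cpunc)_a=H^{-1}(a)$, so $B(\Oqc(\cpunc)_a)=\Omega\cap\pi^{-1}(a)$. The fiber $\pi^{-1}(a)$ is the affine subspace $A^1_\infty(\disk)\oplus\mathbb{C}^*\oplus A^1_\infty(\disk)\oplus\{a\}$; modeling each $\mathbb{C}^*$ factor as an open subset of $\mathbb{C}$, this is a translate of the closed linear subspace $W:=A^1_\infty(\disk)\oplus\mathbb{C}\oplus A^1_\infty(\disk)\oplus\{0\}$. Since $W$ has the one-dimensional complement $\{0\}\oplus\{0\}\oplus\{0\}\oplus\mathbb{C}$, it is a direct summand of $E$, so $\pi^{-1}(a)$ is a complex submanifold of $E$ and $\Omega\cap\pi^{-1}(a)$ is a complex submanifold of $\Omega$. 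Transporting this back through the biholomorphism $B$ gives the claim.

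There is essentially no obstacle once the chart $B$ is in hand; the only point requiring care is the verification, already implicit in \eqref{eq:nice_chart}, that $H$ equals the coordinate $\pi$ rather than some nonlinear function of it, which rests on the identity $S(g)'(0)=g'(\infty)$. As an alternative I could avoid the explicit chart entirely: the global holomorphic section $s$ of Theorem \ref{th:Oqc_structure_quotient} forces $dH$ to be surjective at every point (differentiate $H\circ s=\mathrm{id}$), its kernel has codimension one and is therefore automatically complemented, and the holomorphic submersion theorem in Banach spaces gives the same conclusion. I prefer the chart version, as it is constructive and needs no appeal to the implicit function theorem.
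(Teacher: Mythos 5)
Your argument is correct, and your primary route is genuinely different from the paper's. The paper disposes of this corollary in one line: it invokes Proposition \ref{pr:autc_action} (holomorphicity of the $\Aut(\cpunc)$-action) and Theorem \ref{th:Oqc_structure_quotient} ($H$ holomorphic with a global holomorphic section) and then cites general lemmas from \cite{RadSchip_fiber} to conclude that the fibers are complex submanifolds --- essentially the submersion-with-section argument you relegate to your final sentences. Your main proof instead exploits the special feature that $H$ is not merely a holomorphic map admitting a section but literally the last coordinate of the global chart $B$ of \eqref{eq:nice_chart}, so that $B$ carries $\Oqc(\cpunc)_a=H^{-1}(a)$ onto the intersection of the open set $B(\Oqc(\cpunc))$ with an affine slice whose direction is a complemented closed subspace; the submanifold structure is then visible with no implicit function theorem and no appeal to external lemmas, and you get an explicit submanifold chart for free. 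What the paper's route buys is uniformity: the same cited lemmas handle the fiber structure in settings where no global linearizing chart for $H$ is available, whereas your computation is tied to the specific form of $B$. Two minor points of hygiene: $A^1_\infty(\disk)\oplus\mathbb{C}^*\oplus A^1_\infty(\disk)\oplus\mathbb{C}^*$ is an open subset of a Banach space rather than a Banach space itself (you implicitly repair this when passing to the linear model $W$), and the identification $H=\pi\circ B$ indeed rests on the convention $g'(\infty)=S(g)'(0)$ that the paper builds into \eqref{eq:nice_chart}, which you correctly flag as the one point requiring verification.
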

 \begin{proof} This follows from Proposition \ref{pr:autc_action}
 and Theorem \ref{th:Oqc_structure_quotient}.
  See \cite[Lemmas 2.15 and 2.16]{RadSchip_fiber} and references therein.
 \end{proof}

We can transfer the complex structure from $\Oqc(\cpunc)_a$ to
$\Oqc(\cpunc)/\mbox{Aut}(\mathbb{C}^*)$ as follows.
 The map
\begin{align*}
r_a : \Oqc(\cpunc)/\Aut(\cpunc) & \to \Oqc(\cpunc)_a \\
[f,g] & \mapsto \frac{a}{g'(\infty)}(f,g)
\end{align*}
is a bijection.
 By the Corollary, $\Oqc(\cpunc)/\mbox{Aut}(\mathbb{C}^*)$ inherits a complex structure from
 $\Oqc(\cpunc)$ via $r_a$ in which $r_a$ is automatically a biholomorphism.

Combing this fact with Theorem \ref{th:mba_Ao_iso} and the bijection $I$ in (\ref{Ao_Oqc_iso}) we obtain the following important result.
\begin{theorem} \label{th:complexstructure_via_Oqc}
The two moduli spaces of rigged annuli, $\mba$ and  $\mathcal{A}^0$ inherit complex structures from $\Oqc(\cpunc)$ via the bijections
$$
\mba \stackrel{R}\longrightarrow \mathcal{A}^o \stackrel{I}\longrightarrow \Oqc(\cpunc)/\Aut(\cpunc) \stackrel{r_a}\longrightarrow \Oqc(\cpunc)_a \subset \Oqc(\cpunc) .
$$
\end{theorem}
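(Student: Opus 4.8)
The plan is to recognize that this theorem is an assembly of the bijections and complex structures already established, so the real work is only to confirm that each arrow in the chain is a bijection, that the terminal space $\Oqc(\cpunc)_a$ carries a complex structure, and that the transported structure is canonical.

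First I would record that $R$ is a bijection by Theorem \ref{th:mba_Ao_iso}, that $I$ is a bijection by the discussion surrounding (\ref{Ao_Oqc_iso}), and that $r_a$ is a bijection as noted just before the statement. Consequently the composites $r_a \circ I : \mathcal{A}^o \to \Oqc(\cpunc)_a$ and $r_a \circ I \circ R : \mba \to \Oqc(\cpunc)_a$ are bijections. Next, by Corollary \ref{co:normalized_Oqc_submanifold} the set $\Oqc(\cpunc)_a$ is a complex submanifold of $\Oqc(\cpunc)$, whose ambient complex structure is supplied by Theorem \ref{th:Oqccstar_in_Oqcsquared}; in particular $\Oqc(\cpunc)_a$ is itself a complex manifold. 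Transport of structure along a bijection then equips $\mathcal{A}^o$ and $\mba$ with complex structures for which $r_a \circ I$ and $r_a \circ I \circ R$ are biholomorphisms by definition.

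The one point that genuinely requires checking — and what I expect to be the main, if modest, obstacle — is that the resulting complex structure is independent of the auxiliary choice of $a \in \cpunc$, since the statement asserts that the spaces inherit a well-defined structure from $\Oqc(\cpunc)$. To settle this I would compare the normalizing maps for two parameters: the transition $r_b \circ r_a^{-1} : \Oqc(\cpunc)_a \to \Oqc(\cpunc)_b$ sends $(f,g)$ to $(b/a)(f,g)$, which is precisely the restriction to $\Oqc(\cpunc)_a$ of the $\Aut(\cpunc)$-action by the constant $b/a$. By Proposition \ref{pr:autc_action} that action is holomorphic, so $r_b \circ r_a^{-1}$ is a biholomorphism between the two submanifolds, and hence the complex structures pulled back via $r_a$ and via $r_b$ agree. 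This shows the complex structures on $\mathcal{A}^o$ and $\mba$ are canonical, which completes the argument.

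Aside from this independence check, everything is formal: once the earlier theorems are in hand, the statement is just the observation that a chain of bijections lets us push the complex structure of $\Oqc(\cpunc)_a$ back onto $\mathcal{A}^o$ and $\mba$. I would therefore present the proof as a short paragraph citing Theorem \ref{th:mba_Ao_iso}, (\ref{Ao_Oqc_iso}), Corollary \ref{co:normalized_Oqc_submanifold}, and Proposition \ref{pr:autc_action}, rather than carrying out any new computation.
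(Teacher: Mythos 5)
Your proposal is correct and matches the paper's own (essentially one-line) argument, which simply combines Theorem \ref{th:mba_Ao_iso}, the bijection $I$ from (\ref{Ao_Oqc_iso}), and Corollary \ref{co:normalized_Oqc_submanifold} to transport the structure of $\Oqc(\cpunc)_a$ back along the chain of bijections. Your additional verification that the structure is independent of $a$, via the observation that $r_b \circ r_a^{-1}$ is the restriction of the holomorphic $\Aut(\cpunc)$-action of Proposition \ref{pr:autc_action}, is a worthwhile check that the paper leaves implicit, but it does not change the approach.
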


\end{subsection}
\end{section}
\begin{section}{The relation between the moduli space of rigged annuli and the
Teichm\"uller space of the annulus} \label{se:relation_with_Teich}
\begin{subsection}{Teichm\"uller space and modular groups}

 \label{ssec:teich_def}

Let $\riem$ be a bordered Riemann surface of of type $(g,n)$.
 Consider the set of triples
 $\{(\riem,f_1,\riem_{1})\}$ where $\riem$ is a fixed Riemann
 surface, $\riem_{1}$ is another Riemann surface and $f_1:\riem
 \rightarrow \riem_{1}$ is a quasiconformal map.
 We say that $(\riem,f_1,\riem_{1}) \sim (\riem,f_2,\riem_{2})$
 if there exists a biholomorphism $\sigma:\riem_{1} \rightarrow
 \riem_{2}$ such that $f_2^{-1} \circ \sigma \circ f_1$ is
 homotopic to the identity ``rel boundary''.  ``Rel boundary''
 means that
 the restriction of $f_2^{-1} \circ \sigma \circ f_1$ to the boundary
 is the identity throughout the homotopy.
 \begin{definition}
 The Teichm\"uller space of $\riem$ is
 \[  T(\riem) = \{ (\riem,f_1,\riem_{1})\}/\sim.  \]
 We denote the equivalence classes by $[\riem,f_1,\riem_{1}]$.
 \end{definition}

 It is well known that $T(\riem)$ is a complex Banach manifold with complex structure
 compatible with the space $L^\infty_{-1,1}(\riem)_1$ of Beltrami differentials
 via the fundamental projection $\Phi: L^\infty_{-1,1}(\riem)_1 \to T(\riem)$.
 Here $L^\infty_{-1,1}(\riem)$ denotes the set of $(-1,1)$
 differentials $\mu d\bar{z}/dz$ with bounded essential supremum, and
 $L^\infty_{-1,1}(\riem)_1$ denotes the unit ball in
 $L^\infty_{-1,1}(\riem)$.
 The fundamental projection takes Beltrami differentials $\mu d\bar{z}/dz$ to
 quasiconformal maps whose dilatation is $\mu d\bar{z}/dz$.

As in \cite[section 2.1]{RadSchip05} we introduce a certain subgroup
of the mapping class group.  Proofs and details can be found there.
The pure mapping class group of $\riem$ is the group of homotopy
classes of quasiconformal self-mappings of $\riem$ which preserve
the ordering of the boundary components. Let $\pmcgi[\riem]$ be the
subgroup of the mapping class group consisting of equivalence
classes of mappings that are the identity on the boundary  $\partial
\riem$. The group $\pmcgi[\riem]$ is finitely generated by Dehn
twists.

 The mapping class group acts on $T(\riem)$ by $[\rho] \cdot
[\riem,f,\riem_1] = [\riem, f \circ \rho, \riem_1]$.
\begin{proposition}[{\cite[Lemmas 5.1 and 5.2]{RadSchip05}}]
\label{DBaction} The group $\pmcgi[\riem]$ acts properly
discontinuously and fixed-point freely by biholomorphisms on
$T(\riem)$.
\end{proposition}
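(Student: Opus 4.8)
The plan is to verify the three assertions — that $\pmcgi[\riem]$ acts by biholomorphisms, fixed-point freely, and properly discontinuously — separately, since each has a different character. For the biholomorphicity I would work through the fundamental projection $\Phi: L^\infty_{-1,1}(\riem)_1 \to T(\riem)$. A representative quasiconformal self-map $\rho$ of $\riem$ lifts the action to the Beltrami ball: precomposition by $\rho$ corresponds to the Beltrami pullback $\mu \mapsto \rho^{*}\mu$, which is a biholomorphic self-map of the unit ball $L^\infty_{-1,1}(\riem)_1$ (for fixed $\rho$ the pullback formula is holomorphic in $\mu$), and it intertwines with $\Phi$ so that $\Phi(\rho^{*}\mu)$ represents $[\rho]\cdot \Phi(\mu)$. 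Since $\Phi$ is a holomorphic submersion admitting local holomorphic sections, the induced map on $T(\riem)$ is holomorphic; running the same argument for $[\rho]^{-1}$ produces a holomorphic inverse. Hence each $[\rho]$ acts biholomorphically, and this step is routine.

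The fixed-point-free assertion I expect to be clean and essentially self-contained. Suppose $[\rho]\in\pmcgi[\riem]$ fixes a point $[\riem,f,\riem_1]$. Unwinding the definition of the action together with the equivalence relation, there is a biholomorphism $\sigma:\riem_1\to\riem_1$ for which $f^{-1}\circ\sigma\circ f\circ\rho$ is homotopic to the identity rel boundary. Restricting to $\partial\riem$ and using that $\rho|_{\partial\riem}=\mathrm{id}$ (because $[\rho]\in\pmcgi[\riem]$), the rel-boundary condition forces $(f^{-1}\circ\sigma\circ f)|_{\partial\riem}=\mathrm{id}$, so $\sigma$ is the identity on the boundary $\partial\riem_1$. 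Now a biholomorphism of a bordered Riemann surface that restricts to the identity on a boundary component extends by Schwarz reflection across the (real-analytic) boundary to a holomorphic map of an open neighbourhood in the double which agrees with the identity on an arc; by the identity theorem $\sigma=\mathrm{id}$. Then $f\circ\rho\simeq f$ rel boundary, i.e. $\rho\simeq\mathrm{id}$ rel boundary, so $[\rho]$ is the trivial class. This establishes fixed-point freeness.

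The hard part will be proper discontinuity, because $T(\riem)$ is infinite-dimensional and the classical Fricke argument does not apply directly. The plan is to reduce to the finite-dimensional case through the punctured surface $\riem^{P}$ obtained by capping the boundary components (as in Theorem~\ref{th:sewing} and the construction of $\riem^P$): capping gives a homomorphism $\pmcgi[\riem]\to\mathrm{MCG}(\riem^{P})$ compatible with a natural map $T(\riem)\to T(\riem^{P})$, and on the finite-dimensional target the mapping class group is classically known to act properly discontinuously. The genuine obstacle is the kernel of this homomorphism, generated by the boundary-parallel Dehn twists, which is invisible to the finite-dimensional quotient and is precisely the infinite-dimensional boundary data that makes $T(\riem)$ large. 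For these I would introduce a real twist (Fenchel–Nielsen-type) coordinate along each boundary-parallel core curve — equivalently control the relative twisting of the boundary framings by an extremal-length or modulus function — and observe that the corresponding Dehn twist acts on this coordinate by an integer translation, which is manifestly properly discontinuous and free on $\mathbb{R}$. Combining proper discontinuity downstairs on $T(\riem^{P})$ with proper discontinuity in the twist coordinates along the fibres yields the result; this last combination, and verifying that finitely many curves simultaneously detect all the relevant classes, is where the real work lies, and it is exactly the content for which the proof refers to \cite[Lemmas 5.1 and 5.2]{RadSchip05}.
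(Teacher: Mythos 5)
The paper offers no argument for this proposition: it is imported verbatim from \cite{RadSchip05} (Lemmas 5.1 and 5.2), so your attempt can only be measured against what those lemmas must contain. Your first two parts are essentially correct. For biholomorphicity the one imprecision is the formula: since $\rho$ is only quasiconformal, the Beltrami coefficient of $f\circ\rho$ is not the naive pullback $\rho^{*}\mu$ but the M\"obius-type composition expression $\bigl(\mu_\rho+(\mu_f\circ\rho)\theta\bigr)/\bigl(1+\overline{\mu_\rho}(\mu_f\circ\rho)\theta\bigr)$ with $\theta=\overline{\rho_z}/\rho_z$; this is still a biholomorphic self-map of $L^\infty_{-1,1}(\riem)_1$ for fixed $\rho$, so the conclusion survives. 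The fixed-point-free argument (the rel-boundary condition forces the comparison biholomorphism $\sigma$ to fix $\partial\riem_1$ pointwise, whence $\sigma=\mathrm{id}$ by reflection into the double and the identity theorem, whence $\rho\simeq\mathrm{id}$ rel boundary) is correct and standard.

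The genuine gap is in proper discontinuity, and it is structural. The group $\pmcgi[\riem]$ consists of classes representable by maps that are the identity on $\partial\riem$, and is generated by Dehn twists about boundary-parallel curves; for the annulus --- the only case this paper uses --- $\pmcgi[A]\cong\mathbb{Z}$ is generated by the twist about the core curve. Every such twist caps off to the trivial element of $\mathrm{MCG}(\riem^{P})$ (a twist about a curve bounding a once-punctured disk is isotopic to the identity), so the induced action of $\pmcgi[\riem]$ on the finite-dimensional $T(\riem^{P})$ is \emph{trivial}. Your proposed combination therefore collapses: the classical Fricke proper discontinuity downstairs detects nothing, and the entire burden falls on the ``fibre'' direction, which is exactly the step you leave as a gesture toward a Fenchel--Nielsen-type twist coordinate. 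No such coordinate is available off the shelf on the infinite-dimensional quasiconformal $T(A)$: Fenchel--Nielsen twists are a hyperbolic-geometry construction on finite-type surfaces, and it is not at all obvious that the relative twisting of quasisymmetric boundary data is measured by a continuous real function on which the Dehn twist acts by integer translation. Producing a substitute --- for instance by separating a point from its orbit using the explicit description of the $\mathbb{Z}$-action on the fibres of $T(A)$ over $T(\riem^{P})$, or over the rigged moduli space --- is precisely the nontrivial content of the cited Lemmas 5.1 and 5.2 of \cite{RadSchip05}, and it is absent from the proposal.
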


 In the case of an annulus $A$, $\pmcgi[A] \simeq \mathbb{Z}$. See
 \cite[Proposition 2.1]{RadSchip05} for the general case and references.
 Note that a representative of an element in $\pmcgi[A]$ is a quasiconformal map
 $ g:A \rightarrow A$  such that $ \left. g \right|_{\partial A} =
  \text{Id}$. We subsequently identify $\pmcgi[A]$ with $\mathbb{Z}$.

The quotient of $T(A)$ by $\pmcgi[A]$ is in one-to-one
correspondence to $\mba$. To exhibit the bijection, fix a rigging
$\tau$ of the base annulus $A$. Define
\begin{align}
F : T(A) /  \pmcgi[A] & \to \mba  \\
[A,f,A_1] & \mapsto [A_1, f \circ \tau] \nonumber .
\end{align}
Note that $f \circ \tau$ is a quasisymmetric rigging because the boundary values of the quasiconformal map $f$ are necessarily quasisymmetric.
\begin{theorem}
\label{th:TA_is_M} The map $F$ is a bijection and hence $\mba$
inherits a complex Banach manifold structure from $T(A)$.
\end{theorem}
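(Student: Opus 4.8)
The plan is to establish that $F$ is a well-defined bijection by exhibiting an explicit inverse, and then to transport the complex structure. The core of the argument is the construction preceding the theorem: given $[A, f, A_1] \in T(A)/\pmcgi[A]$, we form $[A_1, f \circ \tau]$, and the key observation supplied by the text is that $f \circ \tau$ is a genuine quasisymmetric rigging, since the boundary values of any quasiconformal map are quasisymmetric (this uses the characterization, stated earlier, that quasisymmetries are precisely boundary values of quasiconformal maps). First I would verify that $F$ is \emph{well-defined}, which requires checking two independences: that $F$ does not depend on the representative $(A,f,A_1)$ of the Teichm\"uller class, and that it does not depend on the choice of representative within the $\pmcgi[A]$-orbit. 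For the former, if $(A,f,A_1) \sim (A,f',A_1')$ via a biholomorphism $\sigma : A_1 \to A_1'$ with $f'^{-1} \circ \sigma \circ f$ homotopic to the identity rel boundary, then in particular $f'^{-1} \circ \sigma \circ f$ is the identity on $\partial A$, so $\sigma \circ f = f'$ on $\partial A$, whence $\sigma \circ (f \circ \tau) = f' \circ \tau$, and $\sigma$ is the required biholomorphism showing $[A_1, f \circ \tau] = [A_1', f' \circ \tau]$ in $\mba$.

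For invariance under the $\pmcgi[A] \simeq \mathbb{Z}$ action, I would use that the action is $[\rho] \cdot [A,f,A_1] = [A, f \circ \rho, A_1]$ where $\rho : A \to A$ restricts to the identity on $\partial A$. Then $(f \circ \rho) \circ \tau = f \circ (\rho \circ \tau)$, and since $\rho|_{\partial A} = \mathrm{Id}$ we have $\rho \circ \tau = \tau$ on the boundary curves $S^1$ where $\tau$ takes values in $\partial A$. Hence $F$ agrees on the whole orbit, so $F$ descends to the quotient. I would next construct the inverse: given $[B, \psi] \in \mba$, the rigging $\psi = (\psi_1, \psi_2)$ is quasisymmetric, so by the Ahlfors--Beurling extension theorem (invoked earlier in the excerpt) there is a quasiconformal map $h : A \to B$ whose boundary values realize $\psi \circ \tau^{-1}$ on $\partial A$, i.e. $h \circ \tau = \psi$ on the boundary. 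Sending $[B,\psi] \mapsto [A, h, B]$ gives a candidate inverse, and one checks it lands in $T(A)/\pmcgi[A]$ and is independent of the choice of quasiconformal extension $h$: any two such extensions differ by a self-map of $A$ that is the identity on the boundary, which is exactly an element of $\pmcgi[A]$, so the class in the quotient is well-defined.

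I expect the \emph{main obstacle} to be precisely this last independence claim for the inverse, namely verifying that two quasiconformal extensions $h, h'$ with the same boundary values satisfy $[A,h,B] = [\rho]\cdot[A,h',B]$ for some $[\rho] \in \pmcgi[A]$. One must argue that $h'^{-1} \circ h : A \to A$ is a quasiconformal self-map equal to the identity on $\partial A$, hence represents an element of $\pmcgi[A]$; the subtlety is that $\pmcgi[A]$ consists of \emph{homotopy classes} rel boundary, so different extensions may correspond to genuinely different Dehn-twist powers, and this is exactly why one must quotient by the full $\mathbb{Z}$-action rather than expecting $F$ to be a bijection from $T(A)$ itself. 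Once the bijection is established, the complex structure transfer is immediate: since $\pmcgi[A]$ acts properly discontinuously and fixed-point-freely by biholomorphisms on the complex Banach manifold $T(A)$ (Proposition \ref{DBaction}), the quotient $T(A)/\pmcgi[A]$ is a complex Banach manifold, and $F$ transports this structure to $\mba$, making $F$ a biholomorphism by fiat. This completes the proof.
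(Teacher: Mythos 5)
Your overall architecture --- well-definedness of $F$ on the quotient, an explicit candidate inverse, and transport of structure via Proposition \ref{DBaction} --- is sound, and the well-definedness and independence checks are correct (the independence of the inverse from the choice of extension is exactly Lemma \ref{le:when_boundary_equal}). However, there is a genuine gap at the surjectivity step, and it sits precisely where the paper flags the non-trivial work. You assert that ``by the Ahlfors--Beurling extension theorem there is a quasiconformal map $h : A \to B$ whose boundary values realize $\psi \circ \tau^{-1}$ on $\partial A$.'' The Ahlfors--Beurling theorem extends a quasisymmetry of $S^1$ (equivalently of $\overline{\mathbb{R}}$) to a quasiconformal map of the disk (or half-plane); it does not produce a quasiconformal map of a doubly-connected surface onto another one realizing \emph{prescribed} quasisymmetric boundary values on \emph{both} boundary components simultaneously. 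Reducing to a self-map of $A$ and trying to extend a quasisymmetric self-correspondence of $\partial A$ inward runs into the problem that the Beurling--Ahlfors construction is not conformally natural, so it does not descend through the universal cover of the annulus, and a naive collar-plus-interpolation argument must still be glued into a globally quasiconformal map. The paper's own remark after the theorem states that this existence result ``ultimately relies on the extended lambda-lemma'' (it is \cite[Corollary 4.1]{RadSchip05}); the paper's proof simply cites \cite[Theorem 5.3]{RadSchip05}, where this is carried out. Without supplying that argument, your inverse is not known to exist, so bijectivity is not established.

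A secondary, smaller point: having built the candidate inverse, you should still verify that the two composites are the identity. The direction $F^{-1}\circ F = \mathrm{id}$ amounts to the converse of your well-definedness computation --- if $[A_1, f\circ\tau]=[A_2, f'\circ\tau]$ in $\mba$ then $\sigma\circ f = f'$ on $\partial A$ for some biholomorphism $\sigma$, and one must conclude equivalence of $[A,f,A_1]$ and $[A,f',A_2]$ modulo $\pmcgi[A]$; this is Lemma \ref{le:when_boundary_equal} again and is easy, but it is not literally contained in what you wrote. The transport of the complex structure in the final paragraph is correct as stated.
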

\begin{proof}
This is the special case of \cite[Theorem 5.3]{RadSchip05} with
$g=1$,$n^- = 1$ and $n^+ = 1$.
\end{proof}
\begin{remark} The non-trivial work in Theorem \ref{th:TA_is_M} is in showing $F$
is onto. It requires the existence of quasiconformal maps between
annuli with specified quasisymmetric boundary values, the proof of
which ultimately relies on the extended lambda-lamma.
\end{remark}

\begin{remark}
The above theorem is in fact not needed in the logical development
 of this paper because we exhibit three other bijections
 $K \circ P^{-1}$, $R$ and $I$ which can be combined to give $F$ (see diagram
 (\ref{teepee})).  It is included because the explicit map can be
 easily written.
\end{remark}

\end{subsection}
\begin{subsection}{Identification of rigged annuli and
$T(A)/\mathbb{Z}$}
  To describe the relation between $T(A)$ and $\mathcal{A}^0$
  we will need the following result.

 \begin{lemma} \label{le:when_boundary_equal}
 Let $[A,h_i, A_i] \in T(A)$ for $i=1,2$. Then $[A,h_1, A_1]$  and
  $[A,h_2, A_2]$ are equivalent in the quotient $T(A) / \mathbb{Z}$ if and only
  if there exists a biholomorphism $\sigma : A_1 \rightarrow A_2$ such that
 $h_2=\sigma \circ h_1$ on $\partial A$.
 \end{lemma}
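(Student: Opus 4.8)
The plan is to prove both implications by directly unwinding the definition of the Teichm\"uller equivalence relation and of the $\mathbb{Z}$-action, exploiting the crucial fact that every class in $\pmcgi[A]$ has a representative which is the identity on $\partial A$. The object that drives both directions is the quasiconformal self-map $\phi = h_2^{-1} \circ \sigma \circ h_1 : A \to A$ associated to a candidate biholomorphism $\sigma : A_1 \to A_2$, together with the observation that its boundary behaviour detects exactly the condition $h_2 = \sigma \circ h_1$ on $\partial A$.

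For the forward direction, suppose $[A,h_1,A_1]$ and $[A,h_2,A_2]$ lie in the same $\mathbb{Z}$-orbit. Then there is a representative $\rho$ of some class in $\pmcgi[A]$, chosen to be the identity on $\partial A$, such that $[A, h_1\circ\rho, A_1] = [A,h_2,A_2]$ in $T(A)$. By the definition of the Teichm\"uller relation this furnishes a biholomorphism $\sigma:A_1\to A_2$ with $h_2^{-1}\circ\sigma\circ h_1\circ\rho$ homotopic to the identity rel boundary. Since a rel-boundary homotopy is the identity on $\partial A$ throughout, the map itself restricts to the identity on $\partial A$; as $\rho|_{\partial A} = \mathrm{Id}$ this collapses to $(h_2^{-1}\circ\sigma\circ h_1)|_{\partial A} = \mathrm{Id}$, i.e.\ $h_2 = \sigma\circ h_1$ on $\partial A$, as required.

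For the converse, given $\sigma:A_1\to A_2$ biholomorphic with $h_2 = \sigma\circ h_1$ on $\partial A$, I would form $\phi = h_2^{-1}\circ\sigma\circ h_1 : A \to A$. This is an orientation-preserving quasiconformal self-map, and the boundary hypothesis gives $\phi|_{\partial A} = \mathrm{Id}$; in particular $\phi$ preserves the ordering of the boundary components, so its homotopy class lies in $\pmcgi[A]\cong\mathbb{Z}$. Taking the inverse class, represented by the self-map $\phi^{-1}$ (again quasiconformal and the identity on $\partial A$), one computes $h_2^{-1}\circ\sigma\circ (h_1\circ\phi^{-1}) = \phi\circ\phi^{-1} = \mathrm{Id}$, which is trivially homotopic to the identity rel boundary. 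Hence $[\phi^{-1}]\cdot[A,h_1,A_1] = [A, h_1\circ\phi^{-1}, A_1] = [A,h_2,A_2]$ in $T(A)$, so the two points agree in $T(A)/\mathbb{Z}$.

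The argument is essentially a definition chase, so the only real care is needed at two points, and I expect these to be the main (mild) obstacles rather than any deep difficulty. First, in the forward direction I must use the full strength of ``homotopic to the identity rel boundary''---not merely homotopic---so that the boundary restriction of the composite is genuinely the identity and not just homotopic to it. Second, in the converse I must confirm that $\phi$ is a legitimate element of $\pmcgi[A]$ (orientation-preserving, boundary-ordering-preserving, and representable by a boundary-fixing map) and that choosing $\phi^{-1}$ rather than $\phi$ yields the exact identity; this is where the direction of the action $[\rho]\cdot[A,f,A_1]=[A,f\circ\rho,A_1]$ must be tracked carefully.
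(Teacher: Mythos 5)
Your proof is correct and follows essentially the same route as the paper, which simply observes that $h_2=\sigma\circ h_1$ on $\partial A$ is equivalent to $h_2^{-1}\circ\sigma\circ h_1$ restricting to the identity on $\partial A$, i.e.\ to its class lying in $\pmcgi[A]\cong\mathbb{Z}$. Your version just spells out the definition chase (including the rel-boundary point and the direction of the group action) that the paper's two-line proof leaves implicit.
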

 \begin{proof}  $h_2=\sigma \circ h_1$ on $\partial A$ if and only
 if $h_2^{-1} \circ \sigma \circ h_1 = \text{Id}$ on $\partial A$ if and
 only if $h_2^{-1} \circ \sigma \circ h_1 \in \pmcgi[A]$. We previously
 observed that $\pmcgi[A] \cong \mathbb{Z}$.
 \end{proof}

 To describe the quotient map from $T(A)$ to
 $\Oqc(\cpunc)/\Aut(\cpunc)$ we will choose the base surface $A \subset \cpunc$ and choose a canonical
 representation of elements of $T(A)$.

Given any doubly-connected bordered Riemann surface $A^*$ and any
rigging $\tau^*$, the caps can be sewn on to obtain $\riem^P$ as in
equation (\ref{riemP}) together with the biholomorphically extended
rigging $\tilde{\tau}^*$ as in equation
(\ref{eq:tauextension_definition}). Since $\riem^P$ has two
punctures and genus zero, we may choose a biholomorphism $\sigma :
\riem^P \to \cpunc$. Let $A = \sigma(\overline{A^*}), C_0 =
\sigma(\cdisk_0)$ and $C_{\infty} = \sigma(\pcdiski)$. The map $\tau
= \sigma \circ \tau^*$ is a rigging of $A$ and $\tilde{\tau} =
\sigma \circ \tilde{\tau}^*$ is its biholomorphic extension to the
caps $C_1$ and $C_2$.

Thus without loss of generality we may choose our base annulus to
be an $A \subset \cpunc$ which is bounded by quasicircles and give
the base annulus a rigging $\tau = (\tau_0, \tau_{\infty})$ that
extend to biholomorphisms $\tilde{\tau}_0: \disk_0 \to C_0$ and
$\tilde{\tau}_{\infty}: \disk^*_{\infty} \to C_{\infty}$. We
henceforth work with such a base surface and rigging.
 \begin{definition}  A standard base is $(A,\tau)$
 where $A$ is a doubly-connected region in $\mathbb{C}^*$ bounded by
 quasicircles, and $\tau=(\tau_0,\tau_\infty)$ are quasisymmetric
 riggings which extend to biholomorphisms $\tilde{\tau}_0: \disk_0 \to C_0$
 and $\tilde{\tau}_{\infty}: \disk^*_{\infty} \to C_{\infty}$.
 \end{definition}
 \begin{definition} \label{de:canonical_rep}  Let $(A,\tau)$ be a
 standard base.
 We call a representative $(A,h,A')$ of an element in $T(A)$, a {\it
  canonical representative} if $A'$ is a doubly-connected subset of $\cpunc$ whose
  boundaries are quasicircles, and $h$ is the restriction to $A$ of
  a quasiconformal map $\tilde{h}:\cpunc \rightarrow \cpunc$ which is
  a biholomorphism from the complement of $\overline{A}$ to the
  complement of $\overline{A'}$.
 \end{definition}
 \begin{proposition}  \label{pr:representative_tba}
  Let $(A,\tau)$ be a standard base.  Every element of
  $T(A)$  has a canonical representative.
 \end{proposition}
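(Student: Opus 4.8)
The plan is to take an arbitrary representative of the given class, modify it so that the underlying quasiconformal map becomes conformal on the two caps cut out by $\partial A$, and verify that this modification does not change the point in $T(A)$. The modification is produced by the measurable Riemann mapping theorem applied to a Beltrami differential that has been set equal to zero off $A$.

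Concretely, I would start with a representative $(A,f,A_1)$ and let $\mu$ be the complex dilatation of $f$ on $A$. Writing $C_0$ and $C_\infty$ for the two components of $\cpunc\setminus\overline{A}$ (the caps containing the punctures $0$ and $\infty$), I would define a Beltrami differential $\hat{\mu}$ on $\cbar$ by setting $\hat{\mu}=\mu$ on $A$ and $\hat{\mu}=0$ on $C_0\cup C_\infty$; since $\|\mu\|_\infty<1$ this has dilatation bound $<1$. Solving the Beltrami equation gives a quasiconformal self-map $\tilde h\colon\cbar\to\cbar$ with dilatation $\hat{\mu}$, and because $\hat{\mu}$ vanishes near $0$ and $\infty$ I can normalize $\tilde h$ so that $\tilde h(0)=0$ and $\tilde h(\infty)=\infty$; thus $\tilde h$ restricts to a quasiconformal self-map of $\cpunc$. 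Setting $A'=\tilde h(A)$ and $h=\tilde h|_A$, the boundary curves of $A'$ are quasicircles (quasiconformal images of quasicircles), so $A'$ is a doubly-connected region in $\cpunc$ bounded by quasicircles, and since $\hat\mu\equiv 0$ on $\cpunc\setminus\overline{A}$ the map $\tilde h$ is holomorphic there and restricts to a biholomorphism from $\cpunc\setminus\overline{A}$ onto $\cpunc\setminus\overline{A'}$. Hence $(A,h,A')$ is a canonical representative in the sense of Definition \ref{de:canonical_rep}.

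Finally I would check that $(A,h,A')$ and $(A,f,A_1)$ give the same element of $T(A)$. Since $h$ and $f$ have the same complex dilatation $\mu$ on $A$, the map $\sigma:=h\circ f^{-1}\colon A_1\to A'$ has zero dilatation and is therefore a biholomorphism, and $h^{-1}\circ\sigma\circ f=\mathrm{Id}_A$ is trivially homotopic to the identity rel boundary; this is exactly the equivalence defining $T(A)$. The only genuinely technical ingredient is the invocation of the measurable Riemann mapping theorem together with the normalization fixing the two punctures, which is available precisely because the extended dilatation vanishes on the caps; the rest follows formally from the observation that extending $\mu$ by zero leaves it unchanged on $A$ while forcing conformality on $C_0\cup C_\infty$.
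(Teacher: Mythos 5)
Your proof is correct, but it takes a genuinely different route from the paper's. The paper constructs the canonical representative by sewing the standard caps $\pcdisk$ and $\pcdiski$ onto $A_1$ via the transported rigging $h_1\circ\tau$, uniformizing the resulting twice-punctured genus-zero surface by a biholomorphism $\sigma$ onto $\cpunc$, and taking $h=\sigma\circ h_1$; the extension $\tilde h$ is then conformal on the caps because it is built from $\sigma$ and the biholomorphic extensions $\tilde\tau_0,\tilde\tau_\infty$ there, with quasiconformality across the seam coming from removability of quasicircles. You instead extend the Beltrami coefficient of the given representative by zero to the caps and solve the Beltrami equation on the sphere, then verify equivalence by the standard fact that two quasiconformal maps with the same dilatation on $A$ differ by a conformal map, so that $h^{-1}\circ\sigma\circ f=\mathrm{Id}_A$ is trivially homotopic to the identity rel boundary. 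Your argument is more elementary in that it bypasses the sewing theorem and the removability argument entirely, invoking only the measurable Riemann mapping theorem and the composition formula for dilatations; the paper's version has the advantage of staying inside the sewing framework used throughout and of exhibiting $\tilde h$ directly in terms of the caps and the rigging extensions $\tilde\tau$, which is the form in which the canonical representative is consumed in the definition of $K$. Two small imprecisions in your write-up, neither of which is a gap: the normalization $\tilde h(0)=0$, $\tilde h(\infty)=\infty$ is available for any quasiconformal self-map of $\cbar$ by post-composing with a M\"obius transformation (the vanishing of $\hat\mu$ on the caps is what you need for conformality there and analyticity at the punctures, not for the normalization); and the caps do not contain the punctures, since those are removed from $\cpunc$, though they are of course the components of $\cpunc\setminus\overline{A}$ adjacent to them.
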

 \begin{proof} Choose an arbitrary element $[A,h_1, A_1] \in T(A)$.
 Sew caps onto $A$ using $\tau = (\tau_0, \tau_{\infty})$. Then
 $$
 S=A^* \#_\tau (\cdisk_0 \sqcup \pcdiski)
 $$
 is biholomorphically equivalent to $\cpunc$ via the continuous
 extension of
 $$
 F(x) =
 \begin{cases}
 \tilde{\tau}_0(x), & \text{for } x \in \disk_0 \\
 x, & \text{for } x \in A \\
 \tilde{\tau}_{\infty}(x) , & \text{for } x \in \disk^*_{\infty}
 \end{cases}
 $$
 where $\tilde{\tau}_0$ and $\tilde{\tau}_\infty$ are the
 biholomorphic extensions of $\tau_0$ and $\tau_\infty$.

 Now sew caps onto $A_1$ via the rigging $h_1 \circ \tau$ to obtain the Riemann surface
 $$
   S_1 = A_1 \#_{h_1 \circ \tau} (\cdisk_0 \sqcup \pcdiski)
 $$
  which is biholomorphic to $\cpunc$ via some map
  $\sigma : S_1 \rightarrow \cpunc$. Setting $h= F^{-1} \circ \sigma \circ h_1 = \sigma \circ h_1$ and $A'=\sigma(A_1)$ we have $[A,h,A'] \in T(A)$. Here we consider $A_1 \subset S_1$.

 Consider the diagram
 $$
 \xymatrix@+10pt{S \ar[r]^{h'_1} \ar[d]^F & S_1 \ar[d]^{\sigma} \\
 \cpunc \ar[r]^{\tilde{h}} & \cpunc}
 $$
 where $h'_1$ is defined by
 \begin{equation}
\label{hprime} h'_1(x) =
\begin{cases}
h_1(x) & \text{for }  x \in A \\
x & \text{for } x \in \pcdisk \sqcup \pcdiski .
\end{cases}
\end{equation}
and $\tilde{h} = \sigma \circ h'_1 \circ F^{-1}$.
 That is $\tilde{h}: \cpunc \to \cpunc$ is the quasiconformal extension of $h$ given by
 $$  \tilde{h}(z) =
 \begin{cases}
  h = \sigma \circ
  h_1(z),
  & \text{for } z \in A \\
  \sigma \circ \tilde{\tau}_0^{-1}(z),  & \text{for } z \in \tau_0(\disk) \\
  \sigma \circ \tilde{\tau}_\infty^{-1}(z), & \text{for } z \in \tau_\infty(\disk^*).
  \end{cases}
  $$
  From the definition of the sewing operation, $h'$ is continuous, and
hence it is  quasiconformal by the removability of quasiarcs for
quasiconformal mappings \cite[V.3]{LV} (see also \cite[Section
5.3]{RadSchip05}). Moreover, the restriction of $\tilde{h}$ to the
caps is biholomorphic and so $\tilde{h} \circ \tilde{\tau} \in
\Oqc(\cpunc)$.
  Clearly $\tilde{h}$ is conformal on the complement of $A$ and satisfies $\tilde{h}(0)=0$ and $\tilde{h}(\infty) = \infty)$.

 \end{proof}

 \begin{definition} \label{de:K_definition}
  Let $(A,\tau)$ be a standard base and let
  \[  K: T(A) \rightarrow \Oqc(\cpunc)/\Aut(\cpunc) \]
  be defined by
  \[  K([A,h,A'])= [\tilde{h} \circ \tau_0, \tilde{h} \circ
  \tau_\infty]  \]
  where $(A,h,A')$ is a canonical representative  and $\tilde{h}$ is the corresponding
  extension as in Proposition
  \ref{pr:representative_tba}.
 \end{definition}
 It must be shown that $K$ is well-defined.  Assume that
 $(A,h_1,A_1)$ and $(A,h_2,A_2)$ are canonical representatives
 which are equivalent in $T(A)$.  Then there exists a biholomorphism
 $\sigma:A_1 \rightarrow A_2$ such that $h_2^{-1} \circ \sigma \circ
 h_1$ is homotopic to the identity rel boundary.  Now $\sigma$ can
 be extended to a M\"obius transformation as follows. Define
 \[  \tilde{\sigma}(z) = \left\{ \begin{array}{ll} \sigma(z) & z \in
   A_1 \\ \tilde{h}_2 \circ \tilde{h}_1^{-1}(z) & z \in
   \tau_0(\disk) \cup \tau_\infty(\disk^*) \end{array} \right.  \]
 Since $h_2^{-1} \circ \sigma \circ h_1 = \text{Id}$ on $\partial A$,
 this has a continuous extension across the join since $h_2^{-1} \circ \sigma
 \circ h_1$
 is the identity on $\partial A$, so this map must thus be
 quasiconformal on $\cbar$.  Since it is holomorphic except on a
 quasicircle, it is in fact holomorphic everywhere \cite[V.3]{LV}.
 Thus it is a
 M\"obius transformation which takes $0$ to $0$ and $\infty$ to
 $\infty$.  So $\tilde{\sigma}(z)=az$, and hence $\sigma \circ h_1=h_2$ on
 $\partial A$.  Thus $a\tilde{h}_1 = \tilde{h}_2$ on the complement of $A$, and so $(a\tilde{h}_1 \circ \tau_0, a \tilde{h}_1
 \circ \tau_\infty) = ( \tilde{h}_2 \circ \tau_0, \tilde{h}_2 \circ
 \tau_\infty)$.  This shows that $K$ is well-defined.

 $K$ is surjective, but fails to be injective.
 Let $P:T(A) \rightarrow T(A)/\mathbb{Z}$ denote the quotient
 map.  The action by $\mathbb{Z}$ is properly discontinuous and
 has local holomorphic sections (see Proposition \ref{DBaction}).  It
 turns out that $K \circ P^{-1}$ is a well-defined map from an open subset of
 $T(A)/\mathbb{Z}$ to $\Oqc(\cpunc)/\Aut(\cpunc)$, which we will later show is a
 biholomorphism onto its image.
 \begin{proposition} \label{pr:K_surjective_injective}
  $K$ is surjective.  $K([A,h_1,A_1])=K([A,h_2,A_2])$  if and
  only if $[A,h_1,A_1]$ and
  $[A,h_2,A_2]$ are equivalent mod $\mathbb{Z}$.  Thus
  \[  K \circ P^{-1}:T(A)/\mathbb{Z} \rightarrow
  \Oqc(\cpunc)/\Aut(\cpunc)  \]
  is a well-defined bijection.
 \end{proposition}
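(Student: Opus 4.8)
The plan is to prove three things in sequence: surjectivity of $K$, the two-sided implication characterizing when two classes have the same image, and then to deduce that $K \circ P^{-1}$ is a well-defined bijection as a formal consequence.

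\emph{Surjectivity.} First I would take an arbitrary $[f,g] \in \Oqc(\cpunc)/\Aut(\cpunc)$ and produce a canonical representative $(A,h,A')$ with $K([A,h,A']) = [f,g]$. The natural approach is to run the construction of Proposition \ref{pr:representative_tba} in reverse, using conformal welding (Theorem \ref{th:conformalwelding}). Given a representative $(f,g) \in \Oqc(\cpunc)$, the images $f(\disk)$ and $g(\disk^*)$ are quasidisks whose complement is a doubly-connected region $A'$ bounded by quasicircles. The riggings of the standard base determine, via the extended maps $\tilde\tau_0, \tilde\tau_\infty$, a quasisymmetry on each boundary component; I would construct a quasiconformal map $\tilde h : \cpunc \to \cpunc$, conformal off $\overline A$, carrying the caps $C_0, C_\infty$ to $f(\disk), g(\disk^*)$ so that $\tilde h \circ \tau_0$ and $\tilde h \circ \tau_\infty$ agree with $f$ and $g$ up to the $\Aut(\cpunc)$ normalization. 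The existence of such a $\tilde h$ with prescribed conformal behavior on the complement and a quasiconformal interpolation across $A$ is exactly the kind of gluing that removability of quasicircles (\cite[V.3]{LV}) legitimizes. Restricting $\tilde h$ to $A$ gives $h$, and $(A,h,A')$ is then a canonical representative mapping to $[f,g]$.

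\emph{The characterization.} For the ``if and only if'' I would argue both directions. The reverse direction (equivalent mod $\mathbb{Z}$ implies equal images) is essentially the well-definedness computation already carried out in the paragraph establishing that $K$ is well-defined: if $[A,h_1,A_1]$ and $[A,h_2,A_2]$ are related by an element of $\pmcgi[A] \cong \mathbb{Z}$, then by Lemma \ref{le:when_boundary_equal} there is a biholomorphism $\sigma : A_1 \to A_2$ with $h_2 = \sigma \circ h_1$ on $\partial A$, and the M\"obius extension argument shows $\tilde h_2 = a \tilde h_1$ on the complement of $A$, hence the riggings agree up to the $\Aut(\cpunc)$ action and the images in the quotient coincide. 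For the forward direction I would start from $K([A,h_1,A_1]) = K([A,h_2,A_2])$, meaning $(\tilde h_1 \circ \tau_0, \tilde h_1 \circ \tau_\infty) = a \cdot (\tilde h_2 \circ \tau_0, \tilde h_2 \circ \tau_\infty)$ for some $a \in \cpunc$; this forces $\tilde h_1 = a \tilde h_2$ on the two caps, so setting $\sigma = \tilde h_1 \circ \tilde h_2^{-1}$ (which is the M\"obius map $z \mapsto az$) and restricting, one obtains a biholomorphism $A_2 \to A_1$ intertwining the boundary values, and Lemma \ref{le:when_boundary_equal} then yields equivalence mod $\mathbb{Z}$.

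\emph{Conclusion.} Once surjectivity and the characterization are in hand, the statement that $K \circ P^{-1}$ is a well-defined bijection is purely formal: the characterization says precisely that the fibers of $K$ are exactly the $\mathbb{Z}$-orbits, i.e. $K$ factors through $P$ as an injection on $T(A)/\mathbb{Z}$, and surjectivity of $K$ gives surjectivity of the factored map. I expect the main obstacle to be the surjectivity step, specifically verifying that the welding-and-gluing construction produces a genuinely \emph{canonical} representative in the sense of Definition \ref{de:canonical_rep} --- that $\tilde h$ can simultaneously be made conformal on the complement of $\overline A$, quasiconformal across $A$, and compatible with the fixed base rigging $\tau$. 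The forward implication of the characterization is comparatively routine, as it largely reverses the well-definedness argument already supplied in the text.
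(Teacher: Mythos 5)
Your handling of the ``if and only if'' and of the formal passage to $K\circ P^{-1}$ is essentially the paper's own argument: both directions come down to Lemma \ref{le:when_boundary_equal}, after observing that equality in $\Oqc(\cpunc)/\Aut(\cpunc)$ forces $\tilde{h}_1=a\tilde{h}_2$ off $A$ and that $(A,ah,aA')$ is equivalent to $(A,h,A')$ in $T(A)$, so one may normalize $a=1$. The deduction that $K\circ P^{-1}$ is a well-defined bijection is, as you say, formal. Those parts are correct.

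The gap is in surjectivity, and it is not quite where you locate it. Removability of quasicircles cannot supply the map $\tilde{h}$: it only tells you that a homeomorphism which is quasiconformal on each side of a quasicircle and continuous across it is globally quasiconformal, i.e.\ it is the \emph{last} step of the gluing, not the source of the interpolation. What is actually needed is an existence theorem: a quasiconformal homeomorphism of $A$ onto $A'=\cbar\setminus\bigl(\overline{f(\disk)}\cup\overline{g(\disk^*)}\bigr)$ whose boundary values on the two components of $\partial A$ are the prescribed quasisymmetries $f\circ\tau_0^{-1}$ and $g\circ\tau_\infty^{-1}$, so that it matches the conformal maps $f\circ\tilde{\tau}_0^{-1}$ and $g\circ\tilde{\tau}_\infty^{-1}$ on the caps. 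Extending a quasisymmetry of a single circle into a disk is Ahlfors--Beurling, but simultaneously realizing prescribed quasisymmetric correspondences on \emph{both} boundary components of an annulus is a genuinely nontrivial result; the paper imports it as \cite[Corollary 4.1]{RadSchip05}, and its own remark after Theorem \ref{th:TA_is_M} notes that the proof ultimately relies on the extended lambda-lemma. Conformal welding (Theorem \ref{th:conformalwelding}) does not substitute for it either, since welding produces a complementary pair $(F,G)$ from a single quasisymmetry of $S^1$ and is not the relevant construction here. Once that extension theorem is invoked, the remainder of your surjectivity argument (set $A'=\tilde{h}(A)$, restrict to obtain $h$, verify the representative is canonical) goes through exactly as in the paper.
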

 \begin{proof}
  We first show the injectivity of $K$ up to the
  $\mathbb{Z}$ action.
  Assume that $K([A,h_1,A_1])=K([A,h_2,A_2])$.
  We can assume that $(A,h_1,A_1)$ and $(A,h_2,A_2)$ are
  canonical representatives.
  Thus $[\tilde{h}_1
  \circ \tau_0, \tilde{h}_1 \circ \tau_\infty] =[\tilde{h}_2
  \circ \tau_0, \tilde{h}_2 \circ \tau_\infty]$.  So $a
  \tilde{h}_1 = \tilde{h}_2$ on $\cbar \setminus A$.  Since
  in general $(A, a h, a A')$ is equivalent to $(A, h, A')$ in
  $T(A)$ we can assume that $A_2=A_1$ and $\tilde{h}_2=
  \tilde{h}_1$ on $\cbar \backslash A$. In particular, $h_1$ and
  $h_2$ agree on $\partial A$, so by Lemma
  $\ref{le:when_boundary_equal}$ they are equivalent mod
  $\mathbb{Z}$.

  Conversely, assume that $[A,h_1,A_1]$ and $[A,h_2,A_2]$ are
  equivalent mod $\mathbb{Z}$.   Then the extensions $\tilde{h}_1$ and
  $\tilde{h}_2$ agree on $\cbar \setminus A$ by Lemma
   \ref{le:when_boundary_equal} and so
  $K([A,h_1,A_1])=K([A,h_2,A_2])$.

  Finally, we demonstrate that $K$ is onto.  By \cite[Corollary 4.1]{RadSchip05}  for any $(f,g) \in
  \Oqc(\cpunc)$ there is a quasiconformal extension
  $\tilde{h}:\cbar \rightarrow \cbar$ of $f \circ \tau_0^{-1}$ and
  $g \circ \tau_\infty^{-1}$.  Set $A'= \tilde{h}(A)$ and we
  have that $K([A,h,A'])=(f,g)$.
 \end{proof}

 In proving the surjectivity of $K$, we very nearly defined the
 inverse of $K \circ P^{-1}$.
 \begin{proposition} \label{pr:L_is_KofPinverseinverse}
 The inverse of $K \circ P^{-1}$ is
 \begin{align*} \label{eq:L_definition}
  L: \Oqc(\cpunc)/\Aut(\cpunc) & \rightarrow  T(A)/\mathbb{Z}
  \\ \nonumber
  [f,g] & \mapsto  (A,\left.\tilde{h} \right|_A, \tilde{h}(A))
 \end{align*}
 where $\tilde{h}$ is a quasiconformal extension to $\cpunc$ of $f \circ \tau_0^{-1}$ and
  $g \circ \tau_\infty^{-1}$ for some
 representative $(f,g)$ of $[f,g]$.
 \end{proposition}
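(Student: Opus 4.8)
The statement asserts that the map $L$ is both well-defined and is the two-sided inverse of $K \circ P^{-1}$. The plan is to verify these two things in turn, leaning heavily on the work already done in Proposition \ref{pr:K_surjective_injective}. The essential point is that the construction of $L$ is precisely the inverse construction that was implicitly carried out at the end of the proof of surjectivity of $K$: given $(f,g) \in \Oqc(\cpunc)$, the cited result \cite[Corollary 4.1]{RadSchip05} produces a quasiconformal extension $\tilde{h}:\cpunc \to \cpunc$ of the two boundary maps $f \circ \tau_0^{-1}$ and $g \circ \tau_\infty^{-1}$, and then $(A, \left. \tilde{h}\right|_A, \tilde{h}(A))$ is a canonical representative of an element of $T(A)$ satisfying $K([A, \left.\tilde h\right|_A, \tilde h(A)]) = (f,g)$.

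First I would establish that $L$ is well-defined. There are two choices hidden in the definition: the representative $(f,g)$ of the class $[f,g]$, and the quasiconformal extension $\tilde{h}$. For the dependence on $\tilde{h}$, I would observe that if $\tilde{h}_1$ and $\tilde{h}_2$ are two quasiconformal extensions of the same boundary maps, then they agree on $\partial A$, so by Lemma \ref{le:when_boundary_equal} the resulting elements $(A, \left.\tilde{h}_i\right|_A, \tilde{h}_i(A))$ are equivalent mod $\mathbb{Z}$; hence they define the same point of $T(A)/\mathbb{Z}$. For the dependence on the representative $(f,g)$, I would use that any other representative has the form $(af, ag)$ for $a \in \cpunc$ (the $\Aut(\cpunc)$-action), whose boundary data is $a\tilde h \circ \tau_0^{-1}$, $a \tilde h \circ \tau_\infty^{-1}$; since $(A, a\tilde h|_A, a\tilde h(A))$ is equivalent to $(A, \tilde h|_A, \tilde h(A))$ in $T(A)$ (multiplication by the M\"obius map $z \mapsto az$ is a biholomorphism of the target), these again agree in the quotient.

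Next I would verify that $L$ is the inverse of $K \circ P^{-1}$. One direction is essentially immediate: by the surjectivity construction above, $K(L[f,g]) = (f,g)$, so $(K \circ P^{-1}) \circ L$ is the identity on $\Oqc(\cpunc)/\Aut(\cpunc)$. For the other direction, I would start with $[A,h,A'] \in T(A)$ with canonical representative and set $(f,g) = K([A,h,A']) = (\tilde{h} \circ \tau_0, \tilde{h} \circ \tau_\infty)$; then the extension used in computing $L[f,g]$ has the same boundary values on $\partial A$ as the original $\tilde{h}$, so Lemma \ref{le:when_boundary_equal} gives $L[f,g] = [A,h,A']$ in $T(A)/\mathbb{Z}$, showing $L \circ (K \circ P^{-1})$ is the identity. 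Since both composites are identities and $K \circ P^{-1}$ was already shown in Proposition \ref{pr:K_surjective_injective} to be a bijection, this identifies $L$ as its genuine inverse.

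The main obstacle is bookkeeping rather than mathematical depth: one must carefully track that the two invariances (under the extension $\tilde h$ and under the $\Aut(\cpunc)$-action on $(f,g)$) are exactly what is quotiented out on the two sides, so that $L$ descends to a map of the quotients with no further identifications needed. All of the genuinely hard analysis—existence of quasiconformal extensions with prescribed quasisymmetric boundary values, and removability of quasicircles—has already been packaged into \cite[Corollary 4.1]{RadSchip05} and Lemma \ref{le:when_boundary_equal}, so the proof should reduce to invoking these at the right moments.
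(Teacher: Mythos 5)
Your proposal is correct and follows essentially the same route as the paper: well-definedness in the two choices (the extension $\tilde{h}$ and the representative of $[f,g]$) via Lemma \ref{le:when_boundary_equal}, and the right-inverse property via the surjectivity construction from Proposition \ref{pr:K_surjective_injective}. The only cosmetic difference is that you verify $L \circ (K \circ P^{-1}) = \mathrm{Id}$ directly, whereas the paper deduces the left-inverse property from the already-established injectivity of $K \circ P^{-1}$.
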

 \begin{proof}
  The last paragraph of
 the proof of Proposition
 \ref{pr:K_surjective_injective} shows that $L$ is a right inverse
 of $K \circ P^{-1}$, so long as it is well-defined.

 To show that $L$ is well-defined, assume that $\tilde{h}_1$ and $\tilde{h}_2$ are two possibly
 distinct quasiconformal extensions of   $f \circ \tau_0^{-1}$ and
  $g \circ \tau_\infty^{-1}$,  for a fixed
 representative $(f,g)$ of $[f,g]$.  In this case $\tilde{h}_1$ and
 $\tilde{h}_2$ must agree on $\partial A$, so $A_2=A_1$ and
 $(A,h_1,A_1)$ and $(A,h_2,A_1)$ are in the same equivalence class
 mod $\mathbb{Z}$ by Lemma \ref{le:when_boundary_equal}.  So $L$ is
 independent of the choice of extension.

 Now assume that $[f_1,g_1]=[f_2,g_2]$.  Then $f_2=af_1$ and
 $g_2=ag_1$ for some $a \in \mathbb{C}^*$.  If $\tilde{h}_i$
 are the corresponding extensions of $f_i \circ \tau_0^{-1}$ to
 $\cbar$ for $i=1,2$ and setting $A_i= \tilde{h}_i(A)$,
 then $aA_1=A_2$ and $a\tilde{h}_1=\tilde{h}_2$ on $\partial
 A$.  By  Lemma \ref{le:when_boundary_equal}
 \[  [A,\left. \tilde{h}_1 \right|_A,A_1]=[A,\left.  \tilde{h}_2
 \right|_A,A_2].  \]  Thus $L$ is well-defined.

 Since $K \circ P^{-1}$ is injective, and $L$ is a right inverse,
 it is also a left inverse.  So $K \circ P^{-1} = L^{-1}$.
 \end{proof}
 \begin{remark} \label{re:why_easier}
  The maps $L$ and $K \circ P^{-1}$ are analogous to the map
  identifying the set of non-overlapping maps $\Oqc(\riem)$
  with a fiber in $T(\riem)$ which we defined in
  \cite{RadSchip_fiber}.  However the results of that paper do not apply
  in this case, because there we used the fact that the automorphism group
  has no continuous subgroups in an essential way; this is false
  for an annulus. Furthermore,
  the base space reduces to a
  point, and the ``fiber'' becomes rather
  $\Oqc(\mathbb{C}^*)/\Aut(\cpunc)$.  On the other hand, in some
  ways the proof that $L$ is a biholomorphism is more transparent in
  the case at hand, because here $L$ has an {\it explicit} inverse.
 \end{remark}
\end{subsection}
\end{section}
\begin{section}{The complex structures and holomorphicity of
multiplication} \label{se:compatibility}
\begin{subsection}{The complex structure inherited from $T(A)$}
 Since $\mathrm{PModI}(A)\cong \mathbb{Z}$ acts properly discontinuously and fixed-point freely by
 biholomorphisms, it follows from Proposition
 \ref{pr:K_surjective_injective}
 that $\mathcal{A}^0$ inherits a complex structure from that of
 $T(A)$.  We have thus shown that
 \begin{theorem}  \label{th:complex_structure_from_TA}
  $\mathcal{A}^0$ possesses a complex structure inherited from
  $T(A)$.
 \end{theorem}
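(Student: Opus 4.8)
The plan is to realize the complex structure on $\mathcal{A}^o$ by transporting it along the chain of bijections already assembled, the only genuinely new ingredient being a complex Banach manifold structure on the quotient $T(A)/\mathbb{Z}$. First I would equip $T(A)/\mathbb{Z}$ with a complex structure. By Proposition \ref{DBaction}, $\pmcgi[A]\cong\mathbb{Z}$ acts on the complex Banach manifold $T(A)$ properly discontinuously, fixed-point freely, and by biholomorphisms. These are precisely the hypotheses under which the quotient of a complex manifold is again a complex manifold, and the construction carries over verbatim to the Banach setting: proper discontinuity together with freeness yields, for each orbit, a representative $x$ and an open set $U\ni x$ with $gU\cap U=\emptyset$ for all nontrivial $g\in\mathbb{Z}$, so that $P|_U$ is a homeomorphism onto an open subset of $T(A)/\mathbb{Z}$. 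Pre-composing the holomorphic charts of $T(A)$ defined on such sets $U$ with $(P|_U)^{-1}$ produces an atlas on $T(A)/\mathbb{Z}$ whose transition maps, being restrictions of the biholomorphic action of elements of $\mathbb{Z}$ on overlaps, are holomorphic. Thus $P$ becomes a local biholomorphism.

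Next I would transport this structure to $\mathcal{A}^o$. By Proposition \ref{pr:K_surjective_injective} the map $K\circ P^{-1}\colon T(A)/\mathbb{Z}\to\Oqc(\cpunc)/\Aut(\cpunc)$ is a well-defined bijection, and by (\ref{Ao_Oqc_iso}) the map $I\colon\mathcal{A}^o\to\Oqc(\cpunc)/\Aut(\cpunc)$ is a bijection. Hence $I^{-1}\circ K\circ P^{-1}$ is a bijection of $T(A)/\mathbb{Z}$ onto $\mathcal{A}^o$, and I would simply declare its target to carry the unique complex structure making this map a biholomorphism; equivalently, a subset of $\mathcal{A}^o$ is declared open, and a chart holomorphic, precisely when its pullback to $T(A)/\mathbb{Z}$ is. This yields the asserted complex structure inherited from $T(A)$, which is the content of Theorem \ref{th:complex_structure_from_TA}.

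I do not expect a serious obstacle here: all the analytic difficulty has already been absorbed into Proposition \ref{DBaction} (proper discontinuity, freeness, holomorphicity of the action) and into the bijectivity statements of Proposition \ref{pr:K_surjective_injective}, so the remaining work is the formal quotient construction plus transport of structure. The one point worth flagging is that the complex structure obtained here is \emph{a priori} distinct from the one placed on $\mathcal{A}^o$ in Theorem \ref{th:complexstructure_via_Oqc} directly from $\Oqc(\cpunc)$. At this stage I would claim only the existence of the $T(A)$-induced structure and not its agreement with the other, since the identification of the two structures is exactly what the later holomorphicity results (Theorems \ref{th:L_holomorphic} and \ref{th:Linverse_holomorphic}) are designed to establish.
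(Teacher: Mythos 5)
Your proposal is correct and follows essentially the same route as the paper: the quotient $T(A)/\mathbb{Z}$ acquires a complex structure because the $\mathbb{Z}$-action is properly discontinuous, fixed-point free, and by biholomorphisms (Proposition \ref{DBaction}), and this structure is then transported to $\mathcal{A}^0$ via the bijection of Proposition \ref{pr:K_surjective_injective}. Your explicit remark that the resulting structure is \emph{a priori} distinct from the one in Theorem \ref{th:complexstructure_via_Oqc}, with agreement deferred to Theorems \ref{th:L_holomorphic} and \ref{th:Linverse_holomorphic}, matches the paper's organization exactly.
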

 Furthermore, by our previous work, in this complex
 structure, the multiplication is holomorphic.
 \begin{theorem} \label{th:mult_holo_TB_version}
  Multiplication in $\mathcal{A}^0$ in the complex structure
  inherited from $T(A)$ is holomorphic.
 \end{theorem}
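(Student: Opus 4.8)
The plan is to reduce the statement to the holomorphicity of the sewing operation on Teichm\"uller space, which is the content of our previous work. First I would transport the multiplication into the Teichm\"uller picture. By Theorem \ref{th:TA_is_M} the map $F$ identifies $T(A)/\pmcgi[A]$ with $\mba$, and under this identification the product of Definition \ref{de:border_multiplication} becomes a binary operation
\[ m : \bigl( T(A)/\mathbb{Z}\bigr) \times \bigl( T(A)/\mathbb{Z}\bigr) \longrightarrow T(A)/\mathbb{Z}, \]
namely the sewing of the first boundary component of one annulus to the second boundary component of the other (so $\widetilde{\mathcal{M}}(0,2)\times\widetilde{\mathcal{M}}(0,2)\to\widetilde{\mathcal{M}}(0,2)$ with $g_1=g_2=0$, $n_1=n_2=2$). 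Since the complex structure on $\mba$ referred to in the statement is by definition the one inherited from $T(A)$ through $F$, it suffices to prove that $m$ is holomorphic.

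Next I would pass from the quotient to $T(A)$ itself. By Proposition \ref{DBaction} the group $\pmcgi[A]\cong\mathbb{Z}$ acts properly discontinuously and fixed-point freely by biholomorphisms, so $P:T(A)\to T(A)/\mathbb{Z}$ is a holomorphic covering admitting local holomorphic sections. Because $P\times P$ is a local biholomorphism, $m$ is holomorphic if and only if a local lift
\[ \widetilde{m} : T(A) \times T(A) \longrightarrow T(A) \]
of $m\circ(P\times P)$ is holomorphic. Concretely, $\widetilde{m}$ sends a pair of canonical representatives $([A,h_1,A_1],[A,h_2,A_2])$ to a suitably marked annulus obtained by sewing $A_1$ to $A_2$ along the boundary curves identified via the quasisymmetric riggings $h_1\circ\tau$ and $h_2\circ\tau$ (Theorem \ref{th:sewing} supplies the complex structure on the sewn surface).

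The heart of the argument is then to invoke the general theorem, established in our earlier rigged Teichm\"uller theory \cite{RadSchip05}, that the sewing of bordered Riemann surfaces induces a \emph{holomorphic} map on the associated quasiconformal Teichm\"uller spaces; specializing it to the type $(0,2)$ case yields holomorphicity of $\widetilde{m}$, and hence of $m$. I would stress that, unlike the compatibility of the two complex structures and the fiber-space constructions discussed in Remark \ref{re:why_easier}, this particular statement is \emph{not} obstructed by the continuous family of conformal automorphisms of the annulus: that difficulty concerns the fibration over moduli space, whereas holomorphicity of sewing is formulated entirely at the level of $T(A)$, on which the $\pmcgi[A]$-action remains properly discontinuous.

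The step I expect to be the main obstacle is bookkeeping rather than analysis: one must verify that the marking of the sewn surface defining $\widetilde{m}$ can be chosen to vary holomorphically in the Teichm\"uller parameters so that $\widetilde{m}$ genuinely covers $m$ under $P$, and that the Teichm\"uller complex structure appearing in the general sewing theorem agrees with the structure placed on $\mba$ by Theorem \ref{th:TA_is_M}. Once these identifications are pinned down, holomorphicity is inherited directly from the previously established holomorphicity of the sewing operation, with no new estimates required.
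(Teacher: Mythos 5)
Your proposal is correct and follows essentially the same route as the paper: the paper's proof is simply a citation of the holomorphicity of the sewing operation on rigged Teichm\"uller spaces from \cite{RadSchip05} (their Theorem 6.7), specialized to the genus-zero, two-boundary-component case, which is exactly the reduction you carry out. The additional bookkeeping you describe (lifting through $P$ via local holomorphic sections and matching the complex structure from Theorem \ref{th:TA_is_M}) is implicit in that citation and does not change the argument.
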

 \begin{proof}  This follows from \cite[Theorem 6.7]{RadSchip05},
 with the choice $g_X=g_Y=0$, $n_X^-=n_Y^-=1$, $n_X^+=n_Y^+=1$.
 \end{proof}

\end{subsection}

\begin{subsection}{Compatibility of the two complex structures}
  To show that the two complex structures on
 $\Oqc(\cpunc)/\Aut(\cpunc)$ are compatible, we need to show that
 the maps $L$ and $L^{-1}=K \circ P^{-1}$ are holomorphic, where
 $\Oqc(\cpunc)/\Aut(\cpunc)$ is endowed with the complex structure
 inherited from $\Oqc(\cpunc)$.
 \begin{theorem} \label{th:L_holomorphic}
  $L$ is holomorphic.
 \end{theorem}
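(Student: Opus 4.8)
The plan is to verify holomorphicity by lifting through the two projections that define the complex structures on the domain and target. Since the $\mathbb{Z}$-action on $T(A)$ is properly discontinuous, fixed-point free, and by biholomorphisms (Proposition \ref{DBaction}), the quotient map $P: T(A) \to T(A)/\mathbb{Z}$ is a holomorphic local biholomorphism; hence a map into $T(A)/\mathbb{Z}$ is holomorphic precisely when it admits, locally, a holomorphic lift to $T(A)$. Since $r_a$ is a biholomorphism onto the submanifold $\Oqc(\cpunc)_a \subset \Oqc(\cpunc)$, it is equivalent to show that $L \circ r_a^{-1} : \Oqc(\cpunc)_a \to T(A)/\mathbb{Z}$ is holomorphic. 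I would fix a basepoint, work on a neighbourhood $U$ of a representative $(f_0,g_0) \in \Oqc(\cpunc)_a$, and show that the lifted assignment $(f,g) \mapsto [A, \tilde{h}|_A, \tilde{h}(A)] \in T(A)$ is holomorphic on $U$, where $\tilde{h}$ is the extension of Proposition \ref{pr:representative_tba}.

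For the target, I would use that $T(A)$ carries its complex structure through the fundamental projection $\Phi: L^\infty_{-1,1}(A)_1 \to T(A)$, which is a holomorphic submersion admitting local holomorphic sections. Consequently the lifted map is holomorphic as soon as it factors, on $U$, through a holomorphic map $(f,g) \mapsto \mu_{(f,g)} \in L^\infty_{-1,1}(A)_1$ with $\Phi(\mu_{(f,g)}) = [A,\tilde{h}|_A,\tilde{h}(A)]$. The natural candidate is $\mu_{(f,g)} = \tilde{h}_{\bar{z}}/\tilde{h}_{z}$, the Beltrami differential of the extension restricted to $A$; since $\tilde{h}$ is conformal on the caps $C_0,C_\infty$ (where it equals $f\circ\tilde{\tau}_0^{-1}$ and $g\circ\tilde{\tau}_\infty^{-1}$), this $\mu_{(f,g)}$ is genuinely supported on $A$. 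Moreover post-composition by $z \mapsto az$ leaves the Beltrami differential unchanged, which is exactly why the construction descends consistently to the $\Aut(\cpunc)$-quotient, matching the well-definedness already recorded for $L$.

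The heart of the matter is thus to produce a family of quasiconformal extensions $\tilde{h} = \tilde{h}_{(f,g)}$, conformal on the caps and realizing $f$ and $g$ there, whose Beltrami differentials on $A$ depend holomorphically on $(f,g)$. Here I would exploit that the complex structure of $\Oqc(\cpunc)$ is modelled, via $\theta$ and the chart $\chi(f)=(\Psi(f),f'(0))$, on the two factors of $\Oqc$, for which holomorphically varying quasiconformal extensions are available from \cite{RadSchip_nonoverlapping}; conceptually, one realizes the motion of the boundary quasicircles $\partial_1 A, \partial_2 A$ under the holomorphic family $(f,g)$ as a holomorphic motion and invokes its quasiconformal extension to obtain a holomorphically varying Beltrami coefficient. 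Since, by Lemma \ref{le:when_boundary_equal}, the class $[A,\tilde{h}|_A,\tilde{h}(A)]$ depends only on the boundary values $f\circ\tau_0^{-1}$ and $g\circ\tau_\infty^{-1}$ (up to the $\mathbb{Z}$-action), any such holomorphic family of extensions determines the same point of $T(A)/\mathbb{Z}$, so there is no loss in replacing the extension of Proposition \ref{pr:representative_tba} by the holomorphically varying one.

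The main obstacle I anticipate is precisely establishing the holomorphic dependence of the extension's Beltrami differential on $(f,g)$ in the norm of $L^\infty_{-1,1}(A)$: prescribing boundary values and extending quasiconformally is not a canonical operation, so the holomorphy must come from a genuinely holomorphic extension procedure (holomorphic motions, or the cited construction on $\Oqc$) and not from, say, the Ahlfors--Beurling or barycentric extensions, which need not vary holomorphically. Once the holomorphic family $\mu_{(f,g)}$ is in hand, the remaining verifications --- that $\Phi(\mu_{(f,g)})$ equals the asserted \teich class, and that the local lifts patch to a well-defined holomorphic map into $T(A)/\mathbb{Z}$ --- are routine given the local holomorphic sections of $P$ and $\Phi$ and Hartogs' theorem for checking holomorphy in the Banach charts.
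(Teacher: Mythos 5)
Your proposal follows essentially the same route as the paper: lift $L$ through $r_a$ and through local sections of $P$ and of the fundamental projection $\Phi$, and obtain holomorphic dependence of the Beltrami coefficient of the extension $\tilde{h}$ by realizing the variation of $(f,g)$ as a holomorphic motion and invoking the extended lambda-lemma, with well-definedness handled exactly as you indicate via Lemma \ref{le:when_boundary_equal}. The one refinement the paper makes, which your sketch needs, is that the lambda-lemma is a one-complex-parameter tool, so one cannot produce a single holomorphic family $\mu_{(f,g)}$ over an infinite-dimensional neighbourhood; instead one restricts to complex lines $t \mapsto (f_t,g_t)$ (written explicitly by integrating in the chart $\chi$), concludes G\^ateaux holomorphy of $\Phi^{-1}\circ P^{-1}\circ \tilde{L}$ from Slodkowski's theorem together with the Bers--Royden fact that the dilatation of a holomorphic motion is holomorphic in $t$, and then upgrades to holomorphy using local boundedness, which is automatic since $L^\infty_{-1,1}(A)_1$ is bounded by one.
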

 \begin{proof}
  $L$ has a lift to $\Oqc(\cpunc)$
 given by
 \begin{eqnarray*}
  \tilde{L}: \Oqc(\cpunc) & \rightarrow & T(A)/\mathbb{Z} \\
  (f,g) & \mapsto & (A, \left. \tilde{h} \right|_{A} h(A))
 \end{eqnarray*}
 where $\tilde{h}$ is a quasiconformal extension of $f \circ
 \tau_0^{-1}$ and $g \circ \tau_\infty^{-1}$ (one exists by
 \cite[Corollary 4.1]{RadSchip05}).  It follows directly that
 $L = \tilde{L} \circ r_a$.  Since $r_a$ is biholomorphic, it suffices to
 show that $\tilde{L}$ is holomorphic in order to show that $L$ is.
 It furthermore suffices to show that in a neighbourhood of any
 point $P^{-1} \circ \tilde{L}$ is
 holomorphic for some local inverse of $P$.  The fundamental
 projection
 $\Phi:L^\infty_{-1,1}(A) \rightarrow T(A)$ has local holomorphic inverses
 in a neighbourhood of any point.  We will show that for any choice
 of local inverses $\Phi$ and $P$, $\Phi^{-1}
 \circ P^{-1} \circ \tilde{L}$ is G\^ateaux holomorphic and locally
 bounded.  This is sufficient to demonstrate holomorphicity \cite[p
 178]{Chae}.

 Fix a point $(f,g) \in \Oqc(\cpunc)$.  Let
 $B(f,g)=(u^0,q^0,u^\infty,q^\infty)$ where $B$ is the map
 defined by equation (\ref{eq:nice_chart}).  We will construct a
 holomorphic curve through $B(f,g)$.  Let
 $(v^0,c^0,v^\infty,c^\infty) \in A^1_\infty(\disk) \oplus
 \mathbb{C}^* \oplus A^1_\infty(\disk) \oplus \mathbb{C}^*$.
 Define the complex line $Y(t) = (u_t^0,q_t^0,u_t^\infty,q_t^\infty)$ by
$$ Y(t)= (u^0,q^0,u^\infty,q^\infty) +
 t(v^0,c^0,v^\infty,c^\infty) .
 $$
 The curve $B^{-1} \circ Y(t)$ has an explicit expression, which can
 be found by integrating the differential equation
 $\mathcal{A}(f_t)=u^0+tv^0$, $f_t'(0)=q_t^0$, and similarly for
 $g_t$.  The solution is
 \[  \psi_t=(f_t,g_t) \]
 where
 \[  f_t = \frac{q_t^0}{q^0} \int_0^z f'(\xi)
 \exp{\left(t \int_0^\xi v^0(w) dw \right)} d\xi \]
 and
 \[  S(g_t)= \frac{q_t^\infty}{q^\infty} \int_0^z S(g)'(\xi)
 \exp{\left(t \int_0^\xi v^\infty(w) dw \right)} d\xi.  \]
 We can recover $g_t$ from the second expression if desired by using
 the fact that $S(S(g_t))=g_t$.

 For fixed $z$, both expressions are holomorphic in $t$ by inspection.  By
 \cite[Theorem 3.3]{RadSchip_nonoverlapping} there is a
 neighbourhood $N$ of the origin in $\mathbb{C}$ such that
 $(f_t,g_t)$ are in $\Oqc(\cpunc)$ for all $t \in N$.  Thus the map
 of $f_0(\disk) \cup g_0(\disk^*)$ given by $f_t\circ f_0^{-1}$ restricted
 to $f_0(\disk)$ and $g_t \circ g_0^{-1}$
 restricted to $g_0(\disk^*)$ is a holomorphic motion.  By the extended
 lambda-lemma \cite{Slodkowski}, this extends to a holomorphic motion
 $\tilde{h}_t$ of $\cbar$.  Thus denoting $h_t= \left. \tilde{h_t}
 \right|_A$ and $A_t=h_t(A)$, we get a curve
 \[  t \mapsto (A,h_t,A_t)  \]
 in $T(A)$.

 Because $h_t$ is a holomorphic motion, its complex
 dilatation $\mu(h_t)$ is a holomorphic curve in
 $L^\infty_{-1,1}(A)$ (see for example \cite[Theorem 2]{BR86}).
 Clearly $\mu(h_t) = \Phi^{-1} \circ
 P^{-1} \circ \tilde{L}(f_t,g_t)$ for some local choices of $\Phi^{-1}$ and
 $P^{-1}$.  We have thus shown that $\Phi^{-1} \circ
 P^{-1} \circ \tilde{L}$ is G\^ateaux holomorphic for some local
 choices of $P^{-1}$ and $\Phi^{-1}$.

 Since $L^\infty_{-1,1}(A)_1$ is globally bounded by one, $\Phi^{-1} \circ
 P^{-1} \circ \tilde{L}$ is locally bounded.  This proves the claim.
 \end{proof}

 \begin{theorem} \label{th:Linverse_holomorphic}
  $L^{-1}$ is holomorphic.
 \end{theorem}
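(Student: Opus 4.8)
The plan is to deduce the holomorphicity of $L^{-1}=K\circ P^{-1}$ from that of the map $K$ of Definition \ref{de:K_definition}, and then to verify the latter by exactly the criterion used in the proof of Theorem \ref{th:L_holomorphic}: a locally bounded, G\^ateaux holomorphic map between complex Banach manifolds is holomorphic \cite[p.~178]{Chae}. First I would record the two reductions. Since $\pmcgi[A]\cong\mathbb{Z}$ acts by biholomorphisms, properly discontinuously and fixed-point freely (Proposition \ref{DBaction}), the quotient map $P:T(A)\to T(A)/\mathbb{Z}$ is a holomorphic covering admitting local holomorphic inverses; hence $K\circ P^{-1}$ is holomorphic if and only if $K$ is. Moreover the fundamental projection $\Phi:L^\infty_{-1,1}(A)_1\to T(A)$ is holomorphic with local holomorphic sections, so it suffices to prove that $K\circ\Phi$ is holomorphic. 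Composing with the chart $B\circ r_a$ of $\Oqc(\cpunc)/\Aut(\cpunc)$ from (\ref{eq:nice_chart}), the problem becomes to show that
\[
\Theta = B \circ r_a \circ K \circ \Phi : L^\infty_{-1,1}(A)_1 \longrightarrow A^1_\infty(\disk) \oplus \mathbb{C}^* \oplus A^1_\infty(\disk) \oplus \mathbb{C}^*
\]
is holomorphic. This is the mirror image of the proof of Theorem \ref{th:L_holomorphic}, with the two charts interchanged.

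Next I would make $\Theta$ explicit. Given $\mu\in L^\infty_{-1,1}(A)_1$, extend it by zero to a Beltrami differential $\hat\mu$ on $\cbar$ and let $\tilde h_\mu=w^{\hat\mu}$ be the normalized solution of the Beltrami equation fixing $0$ and $\infty$. By construction $\tilde h_\mu$ is conformal on $\cbar\setminus\overline{A}$ and has dilatation $\mu$ on $A$, so $(A,\tilde h_\mu|_A,\tilde h_\mu(A))$ is a canonical representative of $\Phi(\mu)$ and $K(\Phi(\mu))=[\tilde h_\mu\circ\tau_0,\tilde h_\mu\circ\tau_\infty]$. Writing $f_\mu=\tilde h_\mu\circ\tilde\tau_0$ and $g_\mu=\tilde h_\mu\circ\tilde\tau_\infty$ for the analytic extensions across $0$ and $\infty$ (the fixed biholomorphic riggings $\tilde\tau_0,\tilde\tau_\infty$ carry the conformal region of $\tilde h_\mu$ onto $\disk_0$ and $\disk^*_\infty$), and using that $\mathcal{A}(\cdot)=(\cdot)''/(\cdot)'$ is invariant under the scaling carried out by $r_a$, one finds
\[
\Theta(\mu) = \left( \mathcal{A}(f_\mu),\ \frac{a\, f_\mu'(0)}{g_\mu'(\infty)},\ \mathcal{A}(S(g_\mu)),\ a \right).
\]
The last entry is constant and the third is handled exactly like the first via the symmetry $g\mapsto S(g)$, so it is enough to treat the first two entries.

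For G\^ateaux holomorphicity I would fix $\mu_0$ and a direction $\nu$ and restrict to the complex line $\mu_t=\mu_0+t\nu$. Since $\hat\mu_t$ depends affinely, hence holomorphically, on $t$, the Ahlfors--Bers theorem on the dependence of the normalized solution of the Beltrami equation on parameters (equivalently the holomorphic-motion statement already used for Theorem \ref{th:L_holomorphic}; see \cite{Slodkowski,BR86}) shows that $t\mapsto\tilde h_{\mu_t}(z)$ is holomorphic for each fixed $z$ in the conformal region. Differentiation in $z$ preserves this, so $t\mapsto \mathcal{A}(f_{\mu_t})(z)=f_{\mu_t}''(z)/f_{\mu_t}'(z)$, together with $t\mapsto f_{\mu_t}'(0)$ and $t\mapsto g_{\mu_t}'(\infty)$, are holomorphic pointwise; this yields G\^ateaux holomorphicity of $\Theta$ along every line.

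The hard part will be the local boundedness of $\Theta$ in the target norm: one must control $\sup_{z\in\disk}(1-|z|^2)\,|\mathcal{A}(f_\mu)(z)|$ and the corresponding quantity for $S(g_\mu)$ uniformly for $\mu$ in a neighbourhood of $\mu_0$ (the scalar factors $f_\mu'(0)$ and $g_\mu'(\infty)$ present no difficulty, and $g_\mu'(\infty)\neq 0$ since $g_\mu$ is univalent). Because $\mu$ ranges in a ball of radius bounded below $1$, the dilatations of the $\tilde h_\mu$ are uniformly bounded away from $1$, so the family $\{f_\mu\}$ is normal and the standard distortion estimates for quasiconformally extendible univalent functions bound the $A^1_\infty(\disk)$-norm of the pre-Schwarzian uniformly; equivalently, $\mu\mapsto f_\mu$ is continuous into $\Oqc$ with its Banach manifold topology (cf.\ \cite{RadSchip_nonoverlapping}). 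With G\^ateaux holomorphicity and local boundedness established, \cite[p.~178]{Chae} gives that $\Theta$, hence $K$ and therefore $L^{-1}$, are holomorphic.
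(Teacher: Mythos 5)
Your proposal is sound, and its skeleton coincides with the paper's proof: reduce to a lift of $K$, establish G\^ateaux holomorphicity by showing that the normalized solution of the Beltrami equation depends holomorphically on the parameter pointwise, and combine this with local boundedness (via the universal estimate on the pre-Schwarzian and control of the derivative normalizations) to invoke the criterion of \cite[p 178]{Chae}. You do diverge genuinely in one place, to your advantage: by precomposing with the fundamental projection $\Phi$ and observing that the canonical extension $\tilde h$ of Proposition \ref{pr:representative_tba} has Beltrami coefficient equal to $\mu$ extended by zero to $\cbar$ (because the extension is conformal on the caps), you make the dependence $\mu \mapsto \mu(\tilde h_\mu)$ affine, hence trivially holomorphic. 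The paper instead works with an abstract holomorphic curve $t \mapsto [A,h_t,A_t]$ and must invoke the holomorphicity of the sewing operation \cite[Lemma 6.3]{RadSchip05} to conclude that $t \mapsto \mu(\tilde h_t)$ is holomorphic in $L^\infty_{-1,1}(\cbar)$; your observation renders that appeal unnecessary.

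The one step you elide is the passage from pointwise holomorphy to holomorphy into the Banach space $A^1_\infty(\disk)$: knowing that $t \mapsto \mathcal{A}(f_{\mu_t})(z)$ is holomorphic for each fixed $z$ does not by itself give that $t \mapsto \mathcal{A}(f_{\mu_t})$ is a holomorphic curve in $A^1_\infty(\disk)$, which is what G\^ateaux holomorphicity of $\Theta$ requires. The paper closes this by noting that the point evaluations $E_z$ form a separating subset of the dual of $A^1_\infty(\disk)$ and applying the weak criterion for vector-valued holomorphy of \cite{Grosse}, which needs exactly the local boundedness you establish at the end; since you have that boundedness, the gap is easily filled, but it must be stated. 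Your local-boundedness argument is also more compressed than the paper's, which uses \cite[Theorem 4.7.4]{Hubbard} to confine the lift of $K$ to a fixed neighbourhood and then the Schwarz lemma for $|f_\mu'(0)|$ together with the universal bound $\|(1-|z|^2)\Psi(f)\|_\infty \leq 6$; your normal-families sketch delivers the same estimates, but for the scalar entry you need $|g_\mu'(\infty)|$ bounded away from zero on a neighbourhood, not merely nonvanishing, which again follows from the continuous dependence of the solution on $\mu$.
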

 \begin{proof}
  It is enough to show that $K$ is holomorphic, since $P:T(A)
  \rightarrow T(A)/\mathbb{Z}$ has local holomorphic sections
  and $L^{-1} = K \circ P^{-1}$.
  $K$ has a lift
  \begin{eqnarray*}
   \tilde{K}: T(A) & \rightarrow & \Oqc(\cpunc)\\
   \nonumber
   [A,h,A'] & \mapsto & (\tilde{h} \circ \tau_0,\tilde{h} \circ
   \tau_\infty)
  \end{eqnarray*}
  where we choose the unique representation $(\tilde{h} \circ
  \tilde{\tau}_0,\tilde{h} \circ \tilde{\tau}_\infty)$ of $K([A,h,A'])$ with the
  normalization $(\tilde{h} \circ \tilde{\tau}_{\infty})'(\infty)=1$.
  It suffices to show that
  $\tilde{K}$ is holomorphic.

  Fix $[A,h,A'] \in T(A)$, and let $(f,g)=\tilde{K}([A,h,A'])$.
  Let $B^0$ and $B^{\infty}$ be open sets containing $\overline{f(\disk)}$
  and $\overline{g(\disk^*)}$ respectively.  Let $V$ be a neighbourhood of
  $\Oqc(\cpunc)$ such that $\overline{f_1(\disk)} \subset B^0$
  and $\overline{g_1(\disk)} \subset B^\infty$ for all $(f_1,g_1) \in V$.
  Recall that the map $(f_1,g_1) \mapsto (f_1,S(g_1))$ is holomorphic
  by definition of the complex structure on $\Oqc(\cpunc)$.
  Let $\pi_1:\Oqc(\cpunc) \rightarrow \Oqc$ be the projection
  onto the first component $(f_1,g_1)
  \mapsto f_1$.  We will show that $\pi_1 \circ \tilde{K}$ is
  holomorphic.  The proof for the rigging at infinity is the same,
  except for the introduction of the map $S$.  (The normalization
  causes no difficulties).

  Let $t \mapsto [A,h_t,A_t]$ be a holomorphic curve in
  $T(A)$ for $t$ in a neighbourhood $N \subset \mathbb{C}$
  of zero.  Since the fundamental projection $\Phi:L^\infty_{-1,1}(A)
  \rightarrow T(A)$ is holomorphic and has holomorphic sections,
  we can assume without loss of generality that $t \mapsto \mu(h_t)$ is
  holomorphic.

  Note that $\tilde{h}$ in (Definition \ref{de:K_definition}) of $K$ is related to $h$ by a sewing operation and composition on the left by a holomorphic map. The normalization is also a left composition  by a holomorphic map.
  So by the holomorphicity of the sewing operation \cite[Lemma 6.3]{RadSchip05},
  $t \mapsto \mu(\tilde{h}_t)$ is a holomorphic map into
  $L^\infty_{-1,1}(\cbar)$. Because $\tilde{h}_t$ is normalized, it is
  the unique solution to the
  Beltrami equation with this normalization and with dilatation $\mu(\tilde{h}_t)$. Since solutions
  to the Beltrami equation depend holomorphically on $\mu$
  \cite[Proposition 4.7.6]{Hubbard}, we see that
  $\tilde{h}_t(z)$ is holomorphic in $t$ for fixed $z$.

   Denote $f_t= \tilde{h}_t \circ
  \tau_0$ and $g_t = \tilde{h}_t \circ \tau_\infty$.  Then $f_t(z)$
  is separately holomorphic in $t$ and $z$, so it is jointly
  holomorphic. Thus all of the $z$-derivatives of $f_t(z)$ are
  holomorphic in $t$ on $\mathbb{D}$.

  Thus, $\mathcal{A}(f_t)(z)$ is holomorphic in $t$ for all fixed
  $z$, and $f_t'(0)$ is holomorphic in $t$.  Define
  $E_z:A_1^\infty \rightarrow \mathbb{C}$ to be the point
  evaluation functional $E_z(\rho)=\rho(z)$.  These are continuous
  linear functionals for all $z \in \mathbb{D}$.  For any open
  subset $D\subset \mathbb{D}$, $G=\{ E_z : z \in D \}$ form a
  separating subset of the dual of $A_1^\infty$.  By the previous
  paragraph, $E_z(\mathcal{A}(f_t))= \mathcal{A}(f_t)(z)$ are
  holomorphic in $t$.  So by \cite{Grosse} (see also \cite[Theorem 3.8]{RadSchip_fiber}), if we show that $t \mapsto
  (\mathcal{A}(f_t),f_t'(0))$ is locally bounded, we will have shown that
  $\tilde{K}$ is G\^ateaux holomorphic.
  In fact, if we show that $\tilde{K}$ is locally bounded,
   we can further conclude that
  $\tilde{K}$ is holomorphic (see for example \cite[p 198]{Chae} or \cite[Theorem 3.7]{RadSchip_fiber}).

  We show that $\tilde{K}$ is bounded.  By \cite[Theorem 4.7.4]{Hubbard}
  applied to $\tilde{h} \circ \tau_0$ and $\tilde{h} \circ \tau_\infty$,
  there is a
  neighbourhood $W$ of $[A,h,A']$ in $T(A)$ such that
  $\tilde{K}([A,h_1,A_1]) \subset V$ for all $[A,h_1,A_1] \in V$.
  Now if $(f,g) \in V$, we have that $f(\mathbb{D}) \subset U^0$.
  Since the closure of $B^0$ does not contain $\infty$, we can
  assume that $B^0 \subset \{z : |z|<R\}$ for some $R>0$.  Thus
  $|f'(0)| \leq R$ by the Schwarz lemma.  By the second
  coefficient estimate for univalent functions, we have that
  \[  \left| (1-|z|^2) \Psi(f_1)(z) - 2 \bar{z} \right| \leq
  4  \]
  so
  \[  \| (1-|z|^2) \Psi(f_1)(z) \|_\infty \leq 6  \]
  for all $f_1 \in \pi_1(V)$.  A similar argument shows that $S(g)'(0)$
  and $\Psi(S(g))$ are bounded.  Thus $\tilde{K}$ is bounded on $W$.
  This completes the proof.
 \end{proof}

Finally, we observe that multiplication of annuli is holomorphic.
\begin{corollary} \label{co:multiplication_holomorphic}
 Multiplication of annuli is holomorphic, both in the complex
 structure of non-overlapping maps, and in the complex structure
 inherited from the Teichm\"uller space of annuli.
\end{corollary}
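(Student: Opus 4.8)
The plan is to reduce the corollary to the holomorphicity of multiplication in the Teichm\"uller structure, which is already in hand, by invoking the equivalence of the two complex structures established in the preceding theorems. The assertion concerning the complex structure inherited from $T(A)$ is nothing other than Theorem~\ref{th:mult_holo_TB_version}, so the only genuinely new content is holomorphicity with respect to the complex structure coming from the non-overlapping maps. Both complex structures are carried by the same underlying set---equivalently $\mathcal{A}^o$, or $\Oqc(\cpunc)/\Aut(\cpunc)$, via the bijections $R$ of Theorem~\ref{th:mba_Ao_iso}, $I$ of~(\ref{Ao_Oqc_iso}), and $r_a$. Thus it suffices to observe that these two structures actually coincide and then transport holomorphicity from one to the other.

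Next I would combine Theorems~\ref{th:L_holomorphic} and~\ref{th:Linverse_holomorphic}: since both $L$ and its inverse $L^{-1}=K\circ P^{-1}$ are holomorphic, $L$ is a biholomorphism between $\Oqc(\cpunc)/\Aut(\cpunc)$ equipped with the structure inherited from $\Oqc(\cpunc)$ and $T(A)/\mathbb{Z}$ equipped with the quotient Teichm\"uller structure. Transporting through $R$ and $I$, this says precisely that the identity map on the underlying set of the semigroup is a biholomorphism from the non-overlapping structure to the Teichm\"uller structure; in other words, the two complex structures are literally the same (they determine the same maximal atlas). Consequently the multiplication map $\mathcal{A}^o\times\mathcal{A}^o\to\mathcal{A}^o$, being a single set map, is holomorphic in one structure if and only if it is holomorphic in the other, and combined with Theorem~\ref{th:mult_holo_TB_version} this yields holomorphicity in the non-overlapping structure.

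I expect no genuine obstacle at this stage, since all the analytic difficulty has been discharged earlier: the holomorphic-motion construction underlying Theorem~\ref{th:L_holomorphic}, the joint-holomorphicity together with the second-coefficient bound estimates underlying Theorem~\ref{th:Linverse_holomorphic}, and the holomorphicity of sewing behind Theorem~\ref{th:mult_holo_TB_version}. The corollary is then a formal consequence. The only point meriting a line of verification is that the multiplication defined through the non-overlapping formula~(\ref{eq:annulimultiplication}) agrees, as a set map, with the sewing multiplication on $\mba$ transported by $R$; this is immediate from the construction of $R$ in~(\ref{eq:puncture_border_identification}) and Definition~\ref{de:border_multiplication}, so that the single operation is correctly viewed in both complex structures.
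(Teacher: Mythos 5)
Your proposal is correct and follows exactly the paper's argument: the corollary is deduced from Theorem~\ref{th:mult_holo_TB_version} by using Theorems~\ref{th:L_holomorphic} and~\ref{th:Linverse_holomorphic} to identify the two complex structures. The extra remark about the compatibility of the two multiplication formulas under $R$ is a reasonable sanity check but is not needed beyond what the earlier results already establish.
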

\begin{proof} By Theorems \ref{th:L_holomorphic} and
\ref{th:Linverse_holomorphic}, it is enough to show that
multiplication is holomorphic in the complex structure inherited by
the Teichm\"uller space of annuli.   Thus the claim follows from
\ref{th:mult_holo_TB_version}.
\end{proof}
It also follows immediately that
\begin{corollary}  The complex structure on $\mathcal{A}^0$
inherited from $T(A)$ does not depend on the choice of standard base
$(A,\tau_0,\tau_\infty)$.
\end{corollary}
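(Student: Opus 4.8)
The plan is to deduce the statement from the compatibility of the two complex structures established in Theorems \ref{th:L_holomorphic} and \ref{th:Linverse_holomorphic}, exploiting an asymmetry between the two constructions. The complex structure on $\mathcal{A}^0$ coming from $\Oqc(\cpunc)$ (Theorem \ref{th:complexstructure_via_Oqc}) is defined purely through the maps $I$, $r_a$, $\theta$, and the charts on $\Oqc$; it makes no reference whatsoever to a choice of standard base $(A,\tau_0,\tau_\infty)$. By contrast, the complex structure inherited from $T(A)$ (Theorem \ref{th:complex_structure_from_TA}) is transported through $L = (K\circ P^{-1})^{-1}$, and the maps $K$, $L$, $P$ are only defined after a standard base has been fixed, so a priori this structure could depend on that choice. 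The idea is therefore to use the compatibility theorems to pin the (potentially base-dependent) $T(A)$-structure to the (manifestly base-independent) $\Oqc$-structure.

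First I would record that Theorems \ref{th:L_holomorphic} and \ref{th:Linverse_holomorphic} together say precisely that $L \colon \Oqc(\cpunc)/\Aut(\cpunc) \to T(A)/\mathbb{Z}$ is a biholomorphism, where the source carries the intrinsic $\Oqc$-structure and the target carries the structure inherited from $T(A)$. Consequently the complex structure that the $T(A)$-route induces on $\Oqc(\cpunc)/\Aut(\cpunc)$, and hence (via the bijection $I$ of (\ref{Ao_Oqc_iso})) on $\mathcal{A}^0$, coincides with the intrinsic $\Oqc$-structure. Thus for the fixed standard base used in those theorems, the two structures on $\mathcal{A}^0$ are literally equal.

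Next I would observe that nothing in the statements or proofs of Theorems \ref{th:L_holomorphic} and \ref{th:Linverse_holomorphic}, nor in the constructions of $K$, $L$, and $P$ in Proposition \ref{pr:K_surjective_injective} and Proposition \ref{pr:L_is_KofPinverseinverse}, uses any special feature of the base beyond its being a standard base. Hence for every standard base $(A,\tau_0,\tau_\infty)$ we obtain a biholomorphism $L_{A,\tau}$, and the resulting $T(A)$-structure on $\mathcal{A}^0$ equals the one fixed, base-free $\Oqc$-structure. Since all of these $T(A)$-structures coincide with a single common structure, they coincide with one another, which is exactly the asserted independence of the choice of standard base.

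I do not expect a genuine obstacle here: the substantive analytic work — the G\^ateaux-holomorphicity and local boundedness arguments proving that $L$ and $L^{-1}=K\circ P^{-1}$ are holomorphic — has already been carried out in the two compatibility theorems. The only point that needs to be stated carefully is the uniformity remark, namely that those theorems apply verbatim to an arbitrary standard base, so that the identification of the $T(A)$-structure with the intrinsic $\Oqc$-structure holds uniformly in $(A,\tau_0,\tau_\infty)$.
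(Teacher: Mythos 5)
Your proposal is correct and is essentially the argument the paper intends: the corollary is stated as an immediate consequence of Theorems \ref{th:L_holomorphic} and \ref{th:Linverse_holomorphic}, precisely because the $\Oqc(\cpunc)$-structure is manifestly base-free and the compatibility theorems identify the $T(A)$-structure with it for an arbitrary standard base. The paper gives no further detail beyond a remark that the result also follows from an earlier theorem in \cite{RadSchip05}, so your explicit spelling-out of the uniformity in the choice of base is a faithful elaboration of the same route.
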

\begin{remark} This also follows from \cite[Theorem
5.4]{RadSchip05}.
\end{remark}
\end{subsection}

\begin{subsection}{Summary of mappings}
 For the convenience of the reader, we
provide a diagram illustrating the maps identifying the various
spaces. The bottom row is from Theorem
\ref{th:complexstructure_via_Oqc}.

\begin{equation}
\label{teepee}
\xymatrix@+30pt{
& T(A) \ar[d]^P \ar@/^2pc/[ddr]^{K} & & \\
& T(A)/\mathbb{Z} \ar[dl]_{F}^{\simeq} \ar@/^0.7pc/[dr]^{K \circ P^{-1}}  & & \\
\mba \ar[r]_-{R}^-{\simeq} & \mathcal{A}^o \ar[r]_-{I}^-{\simeq}
& \Oqc(\cpunc)/\Aut(\cpunc) \ar@/^0.7pc/[ul]^L \ar[r]_-{r_a}^-{\simeq} & \Oqc(\cpunc)_a \subset \Oqc(\cpunc) }
\end{equation}

\end{subsection}

\begin{subsection}{A complex structure on bounded univalent functions}
 The bounded univalent functions are a subsemigroup of
 the semigroup of rigged annuli \cite{Neretin_holomorphic,Segal}.
 In this section, we show that
 the bounded univalent functions with quasiconformal extensions
 $\mathcal{E}^0$
 inherits a complex structure from $\mathcal{A}^0$ in which
 composition is holomorphic.
 \begin{theorem} \label{th:Esubmanifold}
   $\mathcal{E}^o$ is a complex submanifold of $\ann^o$.
 \end{theorem}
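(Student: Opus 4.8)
The plan is to produce a single global coordinate chart in which $\mathcal{E}^o$ is visibly the trace of a complemented closed subspace. First I would fix the parameter $a=1$ in Theorem \ref{th:complexstructure_via_Oqc}: since $r_1 \circ I$ is the identity on $\mathcal{A}^o$, the complex structure of $\mathcal{A}^o$ is exactly that of the submanifold $\Oqc(\cpunc)_1 \subset \Oqc(\cpunc)$. Hence it suffices to analyze $\mathcal{E}^o$ inside $\Oqc(\cpunc)_1$ using the global chart $B$ of (\ref{eq:nice_chart}). Writing $B(f,g)=(u^0,q^0,u^\infty,q^\infty)=(\Psi(f),f'(0),\Psi(S(g)),g'(\infty))$, the slice $\Oqc(\cpunc)_1=\mathcal{A}^o$ is the locus $q^\infty=1$, so $B$ restricts to a chart for $\mathcal{A}^o$ taking values in an open subset of $A^1_\infty(\disk)\oplus\mathbb{C}^*\oplus A^1_\infty(\disk)$.

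The key computation is to rewrite the defining condition $g=\mathrm{Id}$ of $\mathcal{E}^o$ in these coordinates. By (\ref{eq:S_definition}), $S(\mathrm{Id})=\mathrm{Id}$, and conversely $S(g)=\mathrm{Id}$ gives $g=S(S(g))=\mathrm{Id}$; so it is equivalent to show that, on $\mathcal{A}^o$, the condition $g=\mathrm{Id}$ is the same as $u^\infty=\Psi(S(g))=0$. For any $(f,g)\in\mathcal{A}^o$ one has $g'(\infty)=1$, and a short expansion at the puncture gives $S(g)(0)=0$ and $S(g)'(0)=1/g'(\infty)=1$. Thus $\Psi(S(g))=0$ forces $S(g)''\equiv 0$, so $S(g)$ is affine, and the two normalizations force $S(g)=\mathrm{Id}$, hence $g=\mathrm{Id}$. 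Consequently
$$ B(\mathcal{E}^o)=B(\mathcal{A}^o)\cap\{\, u^\infty=0 \,\}. $$

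Since $B(\mathcal{A}^o)$ is open and $\{u^\infty=0\}=A^1_\infty(\disk)\oplus\mathbb{C}\oplus\{0\}$ is a closed subspace that is complemented in $A^1_\infty(\disk)\oplus\mathbb{C}\oplus A^1_\infty(\disk)$ (a complement being $\{0\}\oplus\{0\}\oplus A^1_\infty(\disk)$), this single chart exhibits $\mathcal{E}^o$ as the intersection of an open set with a complemented closed subspace, which is precisely the local model of a complex submanifold. Thus $\mathcal{E}^o$ is a complex submanifold of $\mathcal{A}^o$, modelled on $A^1_\infty(\disk)\oplus\mathbb{C}$; in fact $B$ identifies it with $\chi(\{f\in\Oqc : f(\cdisk)\subset\disk\})$, where $\chi$ is the chart (\ref{eq:chi_definition}) for $\Oqc$.

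The computation is routine, and I expect the only genuinely delicate point to be bookkeeping with the normalization: it is the fixed value $g'(\infty)=1$ built into $\mathcal{A}^o=\Oqc(\cpunc)_1$ that upgrades ``$\Psi(S(g))=0$'' from ``$S(g)$ is affine'' to ``$S(g)=\mathrm{Id}$.'' Were one to work instead in the unnormalized space $\Oqc(\cpunc)$, or forget to pass to the slice $q^\infty=1$, the vanishing of the pre-Schwarzian would only cut out the larger family $S(g)(z)=cz$, and one would be obliged to quotient by the $\Aut(\cpunc)$-scaling before recovering $\mathcal{E}^o$; working directly in $\Oqc(\cpunc)_1$ sidesteps this and makes the submanifold chart immediate.
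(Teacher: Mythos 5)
Your proof is correct and takes essentially the same route as the paper: both work in the global chart $B$ of (\ref{eq:nice_chart}) and identify $\mathcal{E}^o$ with the locus where the coordinates $\left(\Psi(S(g)),\, g'(\infty)\right)$ equal $(0,1)$, a complemented affine slice. You are in fact somewhat more careful than the paper, which records only the forward inclusion $B(\mathcal{E}^o)\subset\{u^\infty=0,\ q^\infty=1\}$ and leaves implicit the converse that you verify, namely that $\Psi(S(g))=0$ together with the normalizations $S(g)(0)=0$ and $S(g)'(0)=1/g'(\infty)=1$ forces $g=\mathrm{Id}$.
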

 \begin{proof}
  By the definition of the complex structure on $\ann^o$ and
  Corollary \ref{co:normalized_Oqc_submanifold}, it suffices to
  show that $\mathcal{E}^o$ is a complex submanifold of
  $\Oqc(\cpunc)$.  Fix a point $(f, \text{Id}) \in \mathcal{E}^o \subset
  \ann^o$.  We have a global embedding (Section
  \ref{se:complex_structure_definition})
  \[   B(f_1,g_1) = \left(
  \mathcal{A}(f_1),f_1'(0),\mathcal{A}(S(g_1)),g_1'(\infty) \right). \]
  For every
  element $(f_1,g_1) \in \mathcal{E}^o$,
  \[  B(f_1,g_1) = \left(\mathcal{A}(f_1), f_1'(0),0,1 \right). \]
  It follows
  that $\mathcal{E}^o$ is a complex submanifold of $\mathcal{A}^0$.
 \end{proof}
 \begin{corollary} \label{co:Emultiplication_holo}
  The set of normalized quasiconformally extendible bounded
  univalent maps $\mathcal{B}$ possesses a complex structure, in
  which composition is holomorphic.  This complex structure is
  compatible with that inherited from the Teichm\"uller space of the
  annulus $T(A)$.
 \end{corollary}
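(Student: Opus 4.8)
The plan is to deduce everything from the identification of $\mathcal{B}$ with the subsemigroup $\mathcal{E}^o$ and from the properties of $\mathcal{A}^o$ already established. First I would make the bijection precise. An element $(f,\text{Id}) \in \mathcal{E}^o$ is, by the remark following the definition of $\mathcal{E}$ together with the non-overlapping condition $f(\cdisk) \cap \text{Id}(\overline{\mathbb{D}^*}) = \emptyset$, exactly a normalized bounded univalent map $f:\disk \to \disk$ with $f(0)=0$, $f(\cdisk) \subset \disk$, admitting a quasiconformal extension. Hence $f \mapsto (f,\text{Id})$ is a bijection $\mathcal{B} \to \mathcal{E}^o$. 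I would use it to transport a complex structure onto $\mathcal{B}$: by Theorem \ref{th:Esubmanifold}, $\mathcal{E}^o$ is a complex submanifold of $\mathcal{A}^o$, and so $\mathcal{B}$ acquires a complex structure for which $f \mapsto (f,\text{Id})$ is a biholomorphism onto $\mathcal{E}^o$.

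For holomorphicity of composition I would invoke Proposition \ref{pr:E_multiplication}, which states that under this identification composition corresponds precisely to multiplication in $\mathcal{E}^o$, namely $(f_1,\text{Id}) \cdot (f_2,\text{Id}) = (f_1 \circ f_2, \text{Id})$. It therefore suffices to show that multiplication, restricted to $\mathcal{E}^o \times \mathcal{E}^o$, is holomorphic as a map into $\mathcal{E}^o$. By Corollary \ref{co:multiplication_holomorphic}, multiplication on $\mathcal{A}^o$ is holomorphic, so its restriction to the complex submanifold $\mathcal{E}^o \times \mathcal{E}^o$ is holomorphic into $\mathcal{A}^o$; and since $\mathcal{E}^o$ is closed under multiplication (again Proposition \ref{pr:E_multiplication}) and is a complex submanifold, the corestriction of this map to $\mathcal{E}^o$ is holomorphic. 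Transporting back along the biholomorphism $\mathcal{B} \cong \mathcal{E}^o$ then shows that composition on $\mathcal{B}$ is holomorphic.

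Finally, for compatibility I would appeal to the equivalence of the two complex structures on $\mathcal{A}^o$. Theorems \ref{th:L_holomorphic} and \ref{th:Linverse_holomorphic} show that $L$ and $L^{-1}=K \circ P^{-1}$ are biholomorphisms, so the complex structure that $\mathcal{A}^o$ inherits from $\Oqc(\cpunc)$ coincides with the one it inherits from $T(A)$ (Theorem \ref{th:complex_structure_from_TA}). Since the complex structure on $\mathcal{B}$ was defined as the subspace structure of $\mathcal{E}^o \subset \mathcal{A}^o$, it does not depend on which of these two (equal) ambient structures is used, and in particular it is compatible with the one inherited from $T(A)$.

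The only genuine point requiring care, and the one I expect to be the main obstacle, is the corestriction step: I must ensure not merely that multiplication is holomorphic as a map into $\mathcal{A}^o$, but that, because its image lies in the complex submanifold $\mathcal{E}^o$, it remains holomorphic as a map into $\mathcal{E}^o$ with the subspace complex structure. This is immediate from the definition of a complex submanifold, the inclusion being a holomorphic embedding, but it is precisely the place where the submanifold property supplied by Theorem \ref{th:Esubmanifold} is indispensable.
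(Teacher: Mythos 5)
Your proposal is correct and follows essentially the same route as the paper's own (very terse) proof: identify $\mathcal{B}$ with $\mathcal{E}^o$, combine Theorem \ref{th:Esubmanifold} with Proposition \ref{pr:E_multiplication} and Corollary \ref{co:multiplication_holomorphic}, and get compatibility from Theorems \ref{th:L_holomorphic} and \ref{th:Linverse_holomorphic}. You simply make explicit the corestriction and compatibility steps that the paper leaves implicit.
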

 \begin{proof}  By Corollary \ref{co:multiplication_holomorphic},
 multiplication is holomorphic.  By Proposition
 \ref{pr:E_multiplication}, multiplication in $\mathcal{E}^o$ is
 given by $(f_1, \text{Id}) \cdot (f_2, \text{Id})= (f_1 \circ f_2, \text{Id})$.
 \end{proof}

 This is an interesting application of the ideas of conformal field
 theory to geometric function theory.  As far as we know,
 composition in the semigroup of bounded univalent functions has not been
 related
 to the Teichm\"uller space of doubly-connected
 domains and its complex structure.

 Note that two fundamental semigroups
 in geometric function theory, quasisymmetries and bounded univalent
 functions, both under composition, are in some sense
 ``interpolated'' by the Neretin-Segal semigroup.
 On the other hand, normalized quasisymmetries are a model of the
 universal Teichm\"uller space, but composition of
 quasisymmetries is not
 even continuous in the universal Teichm\"uller space. There is no
 contradiction, since $\mathcal{G}$ is not a subset of $\ann^0$.
 It is of some interest to investigate whether $\mathcal{G}$
 is contained in an appropriately defined boundary of
 $\mathcal{A}^0$ (see \cite[444--445]{Segal}).
\end{subsection}
\end{section}


\begin{thebibliography}{99}
\bibitem{AhlforsSario} L. V. Ahlfors and L. Sario, \textit{Riemann surfaces},
Princeton Mathematical Series, no. 26, Princeton University Press,
Princeton, NJ, 1960.

\bibitem{BR86}
L. Bers and H. L. Royden. {\it Holomorphic families of injections}.
 Acta Math., \textbf{ 157}(3--4), 1986,  259--286.

\bibitem{Chae}  S. B. Chae,  \textit{Holomorphy and calculus in normed spaces}, Monographs and Textbooks in Pure and Applied Mathematics, \textbf{\bf 92},
Marcel Dekker Inc., NY, 1985.

\bibitem{Grosse}
   K.-G. Grosse-{E}rdmann,  \textit{A weak criterion for vector-valued holomorphy}, Math. Proc. Cambridge Philos. Soc., \textbf{169}(2),  2004, 399 --411.

 \bibitem{Huang} Y.-Z. Huang, \textit{ Two-dimensional conformal geometry and
  vertex operator algebras}, Progress in Mathematics, \textbf{ 148}, Birkh\"auser
  Boston Inc., Boston, MA, 1997.

\bibitem{Hubbard} J. H. Hubbard, \textit{Teichm\"uller theory and applications to geometry, topology
and dynamics}, Vol. I,  Matrix Editions, NY, 2006.

\bibitem{LV}
O. Lehto and K. I. Virtanen, \textit{Quasiconformal mappings in the
plane}, 2nd ed., Springer-Verlag, NY, 1973.

\bibitem{Lehto} O. Lehto, \textit{Univalent functions and {T}eichm\"uller spaces}, Grad. Texts in Math., \textbf{ 109}, Springer-Verlag, NY, 1987.

\bibitem{Lempert} L. Lempert,  \textit{The problem of complexifying a {L}ie group}, In: Multidimensional Complex Analysis and Partial Differential Equations, Contemp. Math.,  \textbf{205}(169–-176),  AMS, Providence, RI, 1997.

\bibitem{Mujica} J. Mujica, \textit{ Complex Analysis in Banach Spaces}. North Holland, 1986.

\bibitem{NagSullivan} S. Nag and D. Sullivan, \textit{Teichm\"uller theory and
 the universal period mapping via quantum calculus and the $H^{1/2}$ space on
 the circle}, Osaka J. Math.,  \textbf{32}(1), 1995, 1--34.

\bibitem{Neretin_semigroup}
 Y. A. Neretin,
{\it On a complex semigroup containing the group of diffeomorphisms
of the circle.} Functional Anal. Appl., \textbf{21}(2), 1987, 160--161.

\bibitem{Neretin_holomorphic}
Y. A. Neretin,  {\it Holomorphic extensions of representations of
the group of the diffeomorphisms of the circle}, Math. USSR.
Sbornik., \textbf{67}(1), 1990, 75--97.

\bibitem{Pekonen} O. Pekonen, {\it Universal Teichm\"uller space in geometry
 and physics}, J. Geom. Phys., \textbf{\bf 15}(3), 1995, 227–-251.

\bibitem{Pickrell} D. Pickrell, \textit{Invariant measures for unitary groups
associated to {K}ac-{M}oody {L}ie algebras}, Memoirs of the AMS, \textbf{ 146},
No. 693,  2000.

\bibitem{RadSchip05} D. Radnell and E. Schippers, \textit{Quasisymmetric sewing in rigged
{T}eichm\"uller space}, Commun. Contemp. Math., \textbf{\bf 8}(4), 2006, 481--534.  arXiv:math-ph/0507031.

\bibitem{RadSchip_nonoverlapping} D. Radnell and E. Schippers, \textit{A
complex structure on the set of non-overlapping quasiconformally
extendible mappings into a {R}iemann surface},  J. Anal. Math.,
\textbf{108}(1), 2009. arXiv:0803.3211.

\bibitem{RadSchip_fiber} D. Radnell and E. Schippers,
{\it Fiber structure and local coordinates for the {T}eichm\"uller
space of a bordered {R}iemann surface}, preprint, arXiv:0906.3279

\bibitem{Segal}
G. Segal, \textit{The definition of conformal field theory}, Topology, Geometry
  and Quantum Field Theory (U. Tillmann, ed.), London Mathematical Society
  Lecture Note Series, \textbf{ 308}, Cambridge University Press, 2004, 421--576. Original
  preprint 1988.

\bibitem{Slodkowski} Z. S{\l}odkowski, \textit{\it Holomorphic motions and polynomial hulls}, Proc. Amer. Math.
Soc., \textbf{111}(2), 1991,  347--355.

 \bibitem{TTmem} L. A. Takhtajan, and L.-P. Teo, \textit{
  Weil-{P}etersson metric on the universal {T}eichm\"uller space}.
  Mem. Amer. Math. Soc., \textbf{183}, 2006.

\end{thebibliography}
\end{document}